\newtheorem{theorem}{Theorem}
\newtheorem{lemma}[theorem]{Lemma}
\newtheorem{cor}[theorem]{Corollary}
\newtheorem{prop}[theorem]{Proposition}
\theoremstyle{definition}
\theoremstyle{remark}
\numberwithin{equation}{section} \font\bbb=msbm10 scaled 1100
\newcommand{\ncmnd}{\newcommand}
\ncmnd{\nthm}{\newtheorem}
\theoremstyle{definition}
\theoremstyle{remark}
\newcommand{\real}{\mbox{\bbb R}}
\newcommand{\catzero}{{\mbox{\sc cat($0$)}}}
\newcommand{\catk}{{\mbox{\sc cat($K$)}}}
\newcommand{\catone}{{\mbox{\sc cat($1$)}}}
\begin{document}

\title[TIME-DEPENDENT GRADIENT CURVES]{TIME-DEPENDENT GRADIENT CURVES ON \catzero\ spaces}
\author{C. Jun}
\address{Department of Mathematics,
        University of Pennsylvania, Philadelphia PA, 19104}
\email{cjun@math.upenn.edu}

\begin{abstract}
We prove existence and uniqueness of time-dependent gradient curves for time-dependent functions with a convexity property on \catzero\ spaces.
As an application, we prove existence and uniqueness of continuous pursuit curves, where the evader can be represented by
a convex set or we can chase the barycenter of multiple evaders.
\end{abstract}

\subjclass{91A24, 49N75, 53C20}

\keywords{\catzero\ geometry, Time-dependent gradient curves, Pursuit-evasion games.}

\maketitle

\section{Introduction}
This paper studies time-dependent gradient curves of time-dependent, almost convex (precisely, $\lambda$-convex) functions.
The time-dependent case poses new challenges that do not arise in the time-independent setting.

Time-independent gradient flows have been studied extensively on \catzero\ spaces by Mayer \cite{m} and on related metric spaces by Ambrosio, Gigli and Savar$\mathrm{\acute{e}}$ \cite{ags} \cite{c}. Independently, it was studied in \catk\ spaces geometrically by Lytchak \cite{l}. In a space with curvature bounded from below, it was studied by Petrunin \cite{p}.
In this paper we have invented a method, using the properties proved by Mayer about the gradient flow for a time-independent function, to obtain suitable approximations of the time-dependent case by piecewise fixed-time gradient segments, and show convergence without repeating the very complicated Crandall-Liggett scheme.

After we introduce Mayer's results and modify them slightly in Section 4, we will use time-independent gradient curves to generate discrete solutions for the time-dependent case in Section 5. In order to prove existence, uniqueness and convergence estimates for time-dependent gradient curves, we must formulate appropriate dependence in the time variable (see Examples \ref{ex1111} and \ref{ex111}).

A \emph{\catk\ space} is a complete metric space such that no triangle is fatter than the triangle with same edge lengths in the model space of constant curvature $K$ (see Definition \ref{de3}). Like Mayer, we work on \catzero\ spaces, called nonpositively curved spaces or NPC spaces in \cite{m}.
For $K=0$, we can define \catzero\ spaces by convexity of squared distance functions (see Proposition \ref{g1}).
\catk\ spaces include many important examples.
Among many references, we mention \cite{bh} and \cite{bbi}.
Examples of \catzero\ spaces include simply connected Riemannian manifolds with non-positive sectional curvature (possibly with boundary satisfying a certain condition \cite{abb}) and trees. Spheres, surfaces of revolution, closed Euclidean domains with smooth boundary supported by spheres \cite{abg2} and finite-dimensional spherical polyhedra with the link condition of Gromov \cite{g}, as well as all \catzero\ spaces, are examples of \catk\ spaces for $K>0$.

Our results on time-dependent gradient flows in turn feed back to pursuit-evasion games.
For example, given a point $x_0$, the gradient flow of $dist_{x_0}$ defined by $dist_{x_0}(x):=d(x,x_0)$ is the geodesic flow toward $x_0$ as center. If the point moves, we will get time-dependent gradient curves. For a curve $E=E(t)$, we will show that there are time-dependent gradient curves for the time-dependent function $dist_{E(t)}$ in Section 6. Those curves are called \emph{(simple) continuous pursuit curves} and $E$ is called the \emph{evader}. As we shall show,
we may also allow multiple evaders or uncertainty in evader position.

With different domains and different strategies, pursuit-evasion games have been considered by many mathematicians, computer scientists and engineers. The problems are generated from robotics, control theory and computer simulations. Under a simple pursuit strategy, the main constraints on pursuit-evasion are the geometry and topology of playing domains. Almost always these have been two-dimensional Euclidean domains, or higher-dimensional convex Euclidean domains. Recently there have been results on surfaces of revolution \cite{hm}, cones \cite{m2}, and round spheres \cite{k}. Finally, \catk\ spaces were studied as a natural setting by Alexander, Bishop and Ghrist, because pursuit-evasion requires neither smoothness nor being locally Euclidean \cite{abg2} \cite{abg}.
\catk\ spaces include all previously studied domains and are vastly more general than have been usual in the extensive pursuit-evasion literature.


Recently, continuous pursuit games were applied to show the non-existence of shy-coupled Brownian motions in many Euclidean domains \cite{bbk}.


Very recently (after completion our work on this paper) we discovered Kim and Masmoudi's paper \cite{km} which is close to our work.
However, our paper covers cases not covered by \cite{km}.  In particular, in our
paper, we show that we get time-dependent gradient curves, which are
defined on $[t_0, \infty  )$ without an assumption that our flow has speed bounded uniformly
on the entire space or that our flow has linear speed growth, whereas in \cite{km} they
need to use a Lipschitz constant with uniformly bounded speed.   In Example \ref{ex11111111} we give
a specific example that is covered by our result but not by \cite{km}.


\subsection{Outline of paper}
 In Sections 2 and 3, we list several properties of \catk\ spaces, $\lambda$-convex functions and gradient vectors. In Section 4,
 we look at Mayer's work showing existence and uniqueness of time-independent gradient curves.
 Suppose that we have two $\lambda$-convex functions $F_{t_1}$ and $F_{t_2}$ for fixed-time $t_1$ and $t_2$. Then in Theorem \ref{g21}, we examine the distance between two fixed-time gradient curves issuing from the same point. In this theorem, we need the H\"{o}lder continuity of gradient vectors in the time variable. We give an example illustrating the failure of the extendibility of time-dependent gradient curves in the absence of such a condition.
Section 5 contains the statement and the proof of our main Theorem \ref{g7} showing existence and uniqueness of time-dependent gradient curves. In Section 6, Theorem \ref{g7} is applied to pursuit problems.
In Section 7, we see a property of time-independent gradient curves.

\section{\catk\ spaces}
\subsection{Metric spaces}
Let $(X,d)$ be a metric space. A curve $\gamma : I \to X$ is called a \emph{geodesic}  if for all $t, t^\prime \in I$, $d(\gamma(t),\gamma(t^\prime))=c|t-t^\prime|$ where $c$ is a constant, the \emph{speed} of the geodesic $\gamma$.

$[x y]$ denotes a unit-speed geodesic $\gamma$ from $x$ to $y$ defined on $[0,t]$, where $\gamma(0)=x$, $\gamma(t)=y$ and $t=d(x,y)$.
$\triangle xyz$ denotes the geodesic triangle of geodesics $[x y]$,$[x z]$ and $[y z]$.

A metric space is a \emph{geodesic space} if any two points are joined by a geodesic; and a \emph{C-geodesic space} if any two points with distance $< C$ are joined by a geodesic.

\subsection{\catk\ spaces}
$M_K$ denotes the 2-dimensional, complete, simply-connected space of constant curvature $K$.
Then $M_0 = \mathbb{E}^2$, $M_1 = \mathbb{S}^2$ and $M_{-1} = \mathbb{H}^2$.
Let $d_K$ be the metric of $M_K$.
$D_K$ denotes the diameter of $M_K$. Thus, $D_K = {\pi \over \sqrt{K} }$ if $K > 0$ and $D_K = \infty$ if $K \leq 0$.

A triangle $\triangle \widetilde{x}_1\widetilde{x}_2\widetilde{x}_3$ in $M_K$ is called a \emph{comparison triangle} for $\triangle x_1x_2x_3$ in $X$ if $d_K(\widetilde{x}_i,\widetilde{x}_j)=d(x_i,x_j)$ for $i,j \in \{1,2,3\}$. We write $$\widetilde{\triangle} x_1 x_2 x_3 = \triangle \widetilde{x}_1\widetilde{x}_2\widetilde{x}_3.$$

\begin{defn}\label{de3}
Let $(X,d)$ be a metric space (not necessarily locally compact) and $K$ be a real constant. A complete $D_K$-geodesic space $X$ is a \emph{\catk\ space} if for any geodesic triangle $\triangle xy_1y_2$ of perimeter $< 2 D_K$, and its comparison triangle $\triangle \widetilde{x}\widetilde{y}_1\widetilde{y}_2$ in $M_K$, we have $$d(z_1,z_2) \leq d_K(\widetilde{z}_1,\widetilde{z}_2),$$ where $z_i$ is any point on $[x y_i]$ and $\widetilde{z}_i$ is the point on $[\widetilde{x} \widetilde{y}_i]$ such that $d_K(\widetilde{x},\widetilde{z}_i)=d(x,z_i)$ for $i \in \{1,2\}$.
\end{defn}

Let us define the (Alexandrov) \emph{angle} between two geodesics.
\begin{defn}
Let $\gamma_1$, $\gamma_2$ be two geodesics in $X$ starting at $x$.
The \emph{(Alexandrov) angle} $\angle_{x} (\gamma_1,\gamma_2)$ between $\gamma_1$ and $\gamma_2$ is given by $$\angle_{x} (\gamma_1,\gamma_2) := \limsup_{t_1,t_2 \to 0}  \theta(t_1,t_2)$$ where $\theta(t_1,t_2)$ is the angle of $\triangle \widetilde{x} \widetilde{y}_1 \widetilde{y}_2$ at $\widetilde{x}$ for $y_i = \gamma_i(t_i)$.
\end{defn}
Note that we can get $\theta(t_1,t_2)$ with the law of cosines.
If $X$ is a \catk\ space, then $\theta(t_1,t_2)$ is non-increasing in both variables. So there exists $\lim_{t \to 0} \theta(t,t)$ and it is equal to $\angle_{x} (\gamma_1,\gamma_2)$. For $\triangle xyz$ in $X$, the \emph{angle} $\angle yxz$ of $\triangle xyz$ at $x$ is the Alexandrov angle between $[x y]$ and $[x z]$.

We need Helly's Theorem for general \catzero\ spaces.
\begin{theorem}[Helly's Theorem]\label{t111}\cite[Prop. 5.2]{ls} (see also \cite[B.1]{akp})
Let $X$ be a \catzero\ space and $\{Y_a \}_{a \in I}$ be an arbitrary collection of
	closed bounded convex subsets of $X$.

	If every finite index array
	$\{a_1, a_2, \ldots, a_n \} \subset I$ satisfies
	$$\bigcap_{i=1}^n Y_{a_i} \neq \emptyset,$$
  then
$$\bigcap_{a \in I} Y_a \neq \emptyset.$$
	\end{theorem}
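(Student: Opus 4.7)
The plan is to show that the circumcenters of finite intersections form a Cauchy net whose limit lies in every $Y_a$. First, for each finite $F \subset I$, set $K_F := \bigcap_{a \in F} Y_a$. By hypothesis $K_F$ is nonempty, and as an intersection of closed convex sets it is closed and convex; moreover, by fixing some $a_0 \in I$ and restricting attention to finite subsets $F \ni a_0$, we have $K_F \subseteq Y_{a_0}$, hence $K_F$ is bounded. On a complete \catzero\ space, every nonempty bounded set $B$ has a unique circumcenter (the minimizer of $\sup_{y \in B} d(\cdot, y)$), a standard consequence of the strict convexity given by the semi-parallelogram (CN) inequality. For closed convex $K_F$, a projection argument (using that nearest-point projection onto a closed convex set is $1$-Lipschitz in \catzero) shows that the circumcenter $c_F$ in fact lies in $K_F$.

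Next I would direct the collection of finite subsets of $I$ containing $a_0$ by inclusion, and study the net $(c_F)$. The circumradii $r_F := \sup_{y \in K_F} d(c_F, y)$ are non-increasing in $F$ (since $K_{F'} \subseteq K_F$ whenever $F \subseteq F'$) and bounded below, so they converge to some $r^* \geq 0$. The decisive step is to show that $(c_F)$ is Cauchy. For $F \subseteq F'$, let $m$ be the midpoint of $c_F$ and $c_{F'}$. Applying the CN-inequality with reference point $y \in K_{F'}$ and taking the supremum over $y \in K_{F'}$ yields an estimate of the shape
$$d(c_F, c_{F'})^2 \leq 2\bigl(r_F^2 - r_{F'}^2\bigr),$$
since $\sup_{y \in K_{F'}} d(m,y) \geq r_{F'}$ by definition of circumradius. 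As $r_F^2 \to (r^*)^2$, the right-hand side tends to zero, so $(c_F)$ is Cauchy.

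By completeness of $X$, the net converges to some $c^* \in X$. For each $a \in I$, every sufficiently large $F$ contains $a$, so $c_F \in K_F \subseteq Y_a$ eventually, and closedness of $Y_a$ gives $c^* \in Y_a$. Hence $c^* \in \bigcap_{a \in I} Y_a$.

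The main obstacle is the Cauchy estimate in the second paragraph: it rests on the delicate interplay between the CN-inequality and the variational definition of circumradius, and is precisely the step that would fail without nonpositive curvature. A secondary technical point is the use of the net formalism, which is needed because $I$ may be uncountable; alternatively one could extract a cofinal countable chain $F_1 \subseteq F_2 \subseteq \cdots$ with $r_{F_n} \to r^*$ and work with the resulting Cauchy sequence, which matches the style of \cite{ls}.
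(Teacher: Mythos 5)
Your argument is correct. Note that the paper itself gives no proof of this statement --- it is imported by citation from Lang--Schroeder and Alexander--Kapovitch--Petrunin --- so there is nothing internal to compare against; but your circumcenter argument is a complete and standard proof of exactly this fact. The key estimate checks out: for $F \subseteq F'$ the CN-inequality applied at the midpoint $m$ of $c_F$ and $c_{F'}$, together with $\sup_{y \in K_{F'}} d(c_F,y) \le r_F$ (which uses $K_{F'} \subseteq K_F$ and is worth stating explicitly) and $\sup_{y \in K_{F'}} d(m,y) \ge r_{F'}$, gives $d(c_F,c_{F'})^2 \le 2\bigl(r_F^2 - r_{F'}^2\bigr)$, and monotone convergence of the circumradii then makes the net Cauchy. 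The only ingredients you invoke --- existence and uniqueness of circumcenters of bounded sets in complete \catzero\ spaces, the $1$-Lipschitz projection onto closed convex sets, and convergence of Cauchy nets in complete metric spaces --- are all standard. A mildly simpler variant of the same scheme, which is closer to what the cited sources do, replaces circumcenters by nearest-point projections of a fixed basepoint $p$ onto $K_F$: the distances $d(p,x_F)$ are then non-decreasing and bounded, and the CN-inequality gives $d(x_F,x_{F'})^2 \le 2\bigl(d(p,x_{F'})^2 - d(p,x_F)^2\bigr)$, avoiding the circumcenter existence lemma altogether; either route is fine.
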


\subsection{Tangent spaces}
We can generate a definition of a direction since the triangle inequality holds for angles between three geodesics and $\angle_x (\gamma,\gamma) = 0$ where $\gamma$ is a geodesic starting at $x \in X$.
Two geodesics $\gamma_1$ and $\gamma_2$ starting at $x \in X$  have the \emph{same direction at} $x$ if $\angle_x (\gamma_1,\gamma_2) = 0$ and we denote this relation by $\gamma_1 \sim \gamma_2$. This is an equivalence relation on the set of geodesics starting at $x$.
Then this set of equivalence classes is a metric space with metric $\angle_x$. Denote the equivalence class of $\gamma$ by $[\gamma]$. Now consider the intrinsic metric $d$ induced from $\angle_x$. Note that if $d( [\gamma_1],[\gamma_2] ) \leq \pi$, then
$d( [\gamma_1],[\gamma_2] ) = \angle_x ( [\gamma_1],[\gamma_2] )$.
The completion of this space with metric $d$ is called the \emph{space of directions at} $x$ and is denoted by $\Sigma_x$.

The Euclidean cone over $\Sigma_x$ is called the \emph{tangent cone} $T_x$ at $x$; the elements of $T_x$ are pairs $v=(\xi, r)$ where
       $\xi \in \Sigma_x$, $r \ge 0$ is a real number. We call $\xi$ the direction of $v$, and $r$ the length of $v$.
       All the pairs $(\xi, 0)$ are identified as $o_x$
       and $o_x$ is called the \emph{vertex} of $T_x$.
       The \emph{norm} on $T_x$ is given by $r = ||(\xi,r)||$,
       that is, it is the distance from the vertex $o_x$,
       and the angle between $(\xi, r), (\eta, s) \in T_x$,
       when both $r, s \ne 0$, is the same as the angle between
       $\xi, \eta$.

The \emph{inner product} $\langle v ,w \rangle$ for $v,w \in T_x$ is defined by $||v|| ||w|| \cos \theta$ where $\theta$ is the angle between $v$ and $w$ if $v \neq o_x$ and $w \neq o_x$. Otherwise, define $\langle v ,w \rangle = 0$.


\begin{theorem}\cite{n}
If $X$ is a \catk\ space, then $\Sigma_x$ is a \catone\ space, and $T_x$ is a \catzero\ space.
\end{theorem}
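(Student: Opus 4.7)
The plan is to prove the two claims in sequence: first establish that $\Sigma_x$ is \catone, then deduce that $T_x$ is \catzero\ via the general fact (Berestovskii's cone theorem) that the Euclidean cone over a \catone\ space of diameter at most $\pi$ is \catzero. The central technical ingredient is that infinitesimally small triangles at $x$ look Euclidean, which is where the \catk\ hypothesis enters.

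For the first claim, given three geodesics $\gamma_1, \gamma_2, \gamma_3$ issuing from $x$, I would look at the triangles $\triangle \gamma_1(s)\gamma_2(s)\gamma_3(s)$ for small $s > 0$, take their comparison triangles in $M_K$, and rescale by $1/s$. As $s \to 0$, the rescaled comparison triangles converge to a Euclidean triangle whose side lengths are $\lim_{s\to 0} d(\gamma_i(s), \gamma_j(s))/s$ and whose angles at the three vertices are exactly the Alexandrov angles $\angle_x(\gamma_i, \gamma_j)$ (this uses that in $M_K$, comparison angles are monotone in $s$ by \catk, and hence converge). Applying the spherical law of cosines in the limit yields precisely the \catone\ angle comparison inequality for the triple of directions $[\gamma_1], [\gamma_2], [\gamma_3] \in \Sigma_x$. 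Since every triangle in $\Sigma_x$ of perimeter $<2\pi$ can be approximated by such geodesic-direction triples, and $\Sigma_x$ is complete by definition, a density argument extends the comparison inequality to all admissible triangles in $\Sigma_x$, giving the \catone\ property.

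For the second claim, observe that $\Sigma_x$ has diameter at most $\pi$ by construction. The Euclidean cone $T_x = C(\Sigma_x)$ then inherits the \catzero\ property from $\Sigma_x$ being \catone: distances in the cone are computed via the Euclidean law of cosines applied to angles in $\Sigma_x$, and the \catone\ comparison for the base translates directly into the Euclidean quadrilateral (CN) inequality in the cone, equivalently into convexity of squared distance, which characterizes \catzero\ by Proposition \ref{g1}.

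The main obstacle, and the reason \cite{n} is invoked, is the extension step in the first claim: the density of geodesic directions in $\Sigma_x$ together with the passage from comparison inequalities at three ``honest'' geodesic directions to comparison inequalities at arbitrary triples of points in $\Sigma_x$ (whose realizing geodesics between them in $\Sigma_x$ may not come from anything in $X$) requires careful uniform control of the rescaled angles. The approximating geodesics in $X$ need not project to geodesics of $\Sigma_x$, so one must exploit the non-increasing nature of the comparison angle $\theta(t_1, t_2)$ (already noted right after the definition of the Alexandrov angle) to pass to uniform limits. Once this is handled, the cone step is essentially formal.
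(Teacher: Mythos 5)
The paper offers no proof of this statement; it is quoted from Nikolaev \cite{n}, and your overall skeleton (blow-up of small triangles, monotonicity of comparison angles, then Berestovskii's cone theorem to pass from $\Sigma_x$ being \catone\ to $T_x$ being \catzero) is indeed the standard route. However, as written the argument has two genuine gaps, and the second is exactly the hard core of Nikolaev's theorem. First, a smaller confusion: the Euclidean triangle obtained by rescaling $\triangle\gamma_1(s)\gamma_2(s)\gamma_3(s)$ has vertices at the three ``tips,'' so its angles are not the Alexandrov angles $\angle_x(\gamma_i,\gamma_j)$ (those live at $x$, which is not a vertex); what the blow-up actually gives is that the side lengths converge to $2\sin\bigl(\angle_x(\gamma_i,\gamma_j)/2\bigr)$, i.e.\ the chordal distances between the unit vectors $(\xi_i,1)$ in $T_x$. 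That is only a statement about three pairwise distances, and three pairwise distances carry no \catone\ information whatsoever: the comparison inequality constrains points \emph{along the geodesic sides} of a triangle in $\Sigma_x$, and those sides are invisible in $X$. You flag this as ``the main obstacle'' and then assert it can be handled by ``uniform control of the rescaled angles,'' but no such argument is given, and monotonicity of $\theta(t_1,t_2)$ alone does not produce one. The standard repair is to abandon the triangle formulation at this stage: verify instead the \catzero\ four-point (quadruple) comparison in $T_x$ on the dense set of genuine geodesic directions by a blow-up of quadruples in $X$; that condition is a closed condition on finite point configurations, hence survives passage to limits and to the completion, which is what your ``density argument'' actually needs.

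Second, and independently, you never establish that $\Sigma_x$ (equivalently $T_x$) is a geodesic space, which is part of the definition of \catone\ / \catzero\ used here: the completion of the set of geodesic directions under the intrinsic metric induced by $\angle_x$ has no a priori reason to admit midpoints. This requires a separate construction — for directions $\xi_1,\xi_2$ of geodesics $\gamma_1,\gamma_2$, one shows that the direction of $[x\,m_s]$, where $m_s$ is the midpoint of $[\gamma_1(s)\gamma_2(s)]$, is an approximate midpoint of $\xi_1,\xi_2$ as $s\to 0$, again using the \catk\ comparison. One then invokes the fact that a complete metric space satisfying the four-point condition and admitting approximate midpoints is \catzero. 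With these two steps supplied, your final reduction via Berestovskii's theorem (and Proposition \ref{g1}) is correct and essentially formal, as you say.
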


\begin{defn}
Let $X$ be a \catk\ space, and $\gamma :I \to X$ be a rectifiable curve. For $t,t' \in I$ such that $t \leq t'$, let $\xi_{t'}$ be the direction at $\gamma(t)$ of $[\gamma(t) \gamma(t')]$. The curve
$\gamma$ has a \emph{right-side tangent vector} $\gamma'(t+) = (\xi ,r )$ at $\gamma(t)$ if there exist $$r=\lim_{t' \to t+} { d(\gamma(t'),\gamma(t)) \over t'-t },$$ and $$\xi=\lim_{t' \to t+} \xi_{t'}.$$ 
\end{defn}

Let us see the First Variation Formula for \catk\ spaces.
\begin{theorem}\cite[Page 185]{bh}
Let $X$ be a \catk\ space. For a distance $f(t)$ between unit-speed geodesics $\gamma_1(t)$ and $\gamma_2(t)$,
if $f(0) < D_K$, then $$  f'(0) = - \cos \alpha_1  - \cos \alpha_2 $$
where $\alpha_i$ is the angle at $\gamma_i(0)$ between the geodesic $\gamma_i$ and the geodesic $[\gamma_1(0) \gamma_2(0)]$.
\end{theorem}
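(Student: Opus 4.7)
The plan is to reduce to the model space $M_K$ via comparison triangles and then invoke the classical smooth first variation formula available there. I would first establish a one-sided version: for a fixed point $q=\gamma_2(0)$,
\[
\lim_{t\to 0^+} \frac{d(\gamma_1(t), q) - d(\gamma_1(0), q)}{t} \;=\; -\cos\alpha_1,
\]
and, symmetrically, the analogous statement with the roles of $\gamma_1$ and $\gamma_2$ swapped. The two-sided statement is then recovered by telescoping these two variations.

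For the one-sided formula, consider the comparison triangle $\widetilde{\triangle}\, \gamma_1(0)\, q\, \gamma_1(t)$ in $M_K$ and let $\widetilde{\alpha}_1(t)$ denote its angle at $\widetilde{\gamma_1(0)}$. As noted in the text preceding Theorem~\ref{t111}, the \catk\ inequality forces $\widetilde{\alpha}_1(t)$ to be monotone non-increasing as $t\searrow 0$, with limit equal to the Alexandrov angle $\alpha_1$. Since $d(\gamma_1(t), q) = d_K(\widetilde{\gamma_1(t)}, \widetilde{q})$ by definition of comparison triangle, the law of cosines in $M_K$ gives, in the case $K=0$,
\[
d(\gamma_1(t), q)^2 \;=\; f(0)^2 + t^2 - 2\,f(0)\,t\,\cos\widetilde{\alpha}_1(t),
\]
with the usual spherical or hyperbolic modifications when $K\neq 0$; the hypothesis $f(0) < D_K$ keeps us in the range of validity. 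Dividing by $t$, using $d(\gamma_1(t), q) \to f(0)$, and passing to the limit then yields the one-sided formula.

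To obtain the two-sided formula, I would decompose
\[
f(t) - f(0) \;=\; \bigl[d(\gamma_1(t), \gamma_2(t)) - d(\gamma_1(0), \gamma_2(t))\bigr] + \bigl[d(\gamma_1(0), \gamma_2(t)) - d(\gamma_1(0), \gamma_2(0))\bigr].
\]
The second bracket equals $-t\cos\alpha_2 + o(t)$ by the one-sided formula applied at $\gamma_2(0)$. The first bracket is handled by the one-sided formula at $\gamma_1(0)$ but with moving second endpoint $\gamma_2(t)$, producing $-t\cos\beta(t) + o(t)$, where $\beta(t)$ denotes the Alexandrov angle at $\gamma_1(0)$ between $\gamma_1$ and the geodesic $[\gamma_1(0)\gamma_2(t)]$. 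The main obstacle will be showing $\beta(t) \to \alpha_1$ as $t \to 0^+$, since both the triangle and the reference geodesic now vary with $t$; I would address this by combining continuity of geodesics of length $< D_K$ in \catk\ spaces (so that the direction of $[\gamma_1(0)\gamma_2(t)]$ at $\gamma_1(0)$ converges to the direction of $[\gamma_1(0)\gamma_2(0)]$ in $\Sigma_{\gamma_1(0)}$) with the monotonicity of comparison angles recalled above, squeezing $\beta(t)$ against $\alpha_1$. Adding the two contributions then gives $f'(0) = -\cos\alpha_1 - \cos\alpha_2$.
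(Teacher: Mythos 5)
The paper offers no proof of this statement---it is quoted directly from Bridson--Haefliger---so your argument can only be judged on its own terms, and it has a genuine gap at its central step. In the one-sided formula you assert that the comparison angle $\widetilde{\alpha}_1(t)$ of $\widetilde{\triangle}\,\gamma_1(0)\, q\, \gamma_1(t)$ at $\widetilde{\gamma_1(0)}$ converges to the Alexandrov angle $\alpha_1$, citing the monotonicity of comparison angles. But $\widetilde{\alpha}_1(t)$ is the comparison angle $\theta(t,L)$ with the second parameter frozen at $L=d(\gamma_1(0),q)$, whereas $\alpha_1$ is the limit of $\theta(t,s)$ as \emph{both} parameters tend to $0$; monotonicity in a \catk\ space only yields $\widetilde{\alpha}_1(t)\ge \alpha_1$, and hence, via your law-of-cosines computation, only the easy inequality $\liminf_{t\to 0^+}\bigl(d(\gamma_1(t),q)-d(\gamma_1(0),q)\bigr)/t\ge -\cos\alpha_1$. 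The reverse inequality---equivalently $\lim_{t\to 0}\widetilde{\alpha}_1(t)\le\alpha_1$---is precisely the hard content of the first variation formula and does not follow from monotonicity. Nor does it follow from the obvious triangle inequality $d(\gamma_1(t),q)\le d(\gamma_1(t),\sigma(s))+d(\sigma(s),q)$ with $\sigma=[\gamma_1(0)\, q]$ and $s\asymp t$: that route gives only $\limsup\le 2\sin(\alpha_1/2)-1$, which is strictly weaker than $-\cos\alpha_1$ for every $\alpha_1\notin\{0,\pi\}$. Bridson--Haefliger close exactly this gap with a separate geometric argument (II.3.5, via Alexandrov's lemma); your proof needs that ingredient or an equivalent, and as written it establishes only one of the two required inequalities.

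A secondary issue lies in the telescoping step: the first bracket is estimated by the one-sided formula ``at $\gamma_1(0)$ with moving second endpoint $\gamma_2(t)$,'' i.e., you invoke, at parameter $t$, the $t$-th member of a one-parameter family of asymptotic statements. Even granting $\beta(t)\to\alpha_1$ (which does hold, since $\angle_{\gamma_1(0)}\bigl([\gamma_1(0)\gamma_2(t)],[\gamma_1(0)\gamma_2(0)]\bigr)$ is bounded by the corresponding comparison angle, which tends to $0$), you still need the $o(t)$ error in the one-sided formula to be uniform over the varying endpoints $\gamma_2(t)$; a pointwise limit for each fixed endpoint does not provide this. Both gaps are fixable---the second by quantifying the one-sided estimate in terms of comparison angles---but the first is the substance of the theorem, so the proposal as it stands does not constitute a proof.
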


\section{Semi-convex functions and their gradient vectors}
Suppose $(X,d)$ is a \catzero\ space.
For $x_0,x_1 \in X$ and $0 \leq t \leq 1$, let $x_t$ be the point on $[x_0 x_1]$ such that $t d(x_0, x_1)=d(x_0,x_t)$ and $(1-t) d(x_0, x_1)=d(x_1,x_t)$.
\begin{defn}\label{a6}
For $\lambda \in \mathbb{R}$, a function $F: X \to \mathbb{R}$ is \emph{$\lambda$-convex} if
\begin{equation}\label{eee}
 F ( x_t )  \leq (1-t) F(x_0) + t F(x_1) - t(1-t) {\lambda d^2(x_0,x_1) \over 2}.
\end{equation}
for any $x_0$, $x_1 \in X$.
\end{defn}

\begin{prop}\label{g1}
A geodesic metric space $X$ is \catzero\ if and only if
for any $y$, $x_0$ and $x_1 \in X$, $$d^2(y,x_t) \leq (1-t)d^2(y,x_0) + t d^2(y,x_1) - t(1-t)d^2(x_0,x_1).$$
Briefly, if and only if functions $x \mapsto d^2(y,x)$ are 2-convex.
\end{prop}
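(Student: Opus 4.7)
The plan is to prove each direction by comparison with $\mathbb{E}^2$, relying on the fact that the stated inequality holds with \emph{equality} in Euclidean space. In $\mathbb{E}^2$, writing $\widetilde{x}_t - \widetilde{y} = (1-t)(\widetilde{x}_0 - \widetilde{y}) + t(\widetilde{x}_1 - \widetilde{y})$ and applying the polarization identity produces
$$|\widetilde{y}-\widetilde{x}_t|^2 = (1-t)|\widetilde{y}-\widetilde{x}_0|^2 + t|\widetilde{y}-\widetilde{x}_1|^2 - t(1-t)|\widetilde{x}_0-\widetilde{x}_1|^2,$$
and this Euclidean identity will be the engine behind both directions.

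\emph{($\Rightarrow$).} Assume $X$ is \catzero. Fix $y, x_0, x_1 \in X$ and form the comparison triangle $\widetilde{\triangle}y x_0 x_1 = \triangle \widetilde{y}\widetilde{x}_0\widetilde{x}_1 \subset \mathbb{E}^2$, with $\widetilde{x}_t$ corresponding to $x_t$. The three distances on the right-hand side of the Euclidean identity equal the corresponding distances in $X$ by construction, and the \catzero\ condition supplies $d(y, x_t) \leq |\widetilde{y}-\widetilde{x}_t|$; squaring finishes this direction.

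\emph{($\Leftarrow$).} Assume the inequality holds in $X$. Given a triangle $\triangle x y_1 y_2$ and points $z_i \in [x y_i]$, introduce $s_i = d(x, z_i)$, $r_i = d(x, y_i)$, $c = d(y_1, y_2)$. I plan to apply the hypothesis twice: first with $y = z_2$ along $[x y_1]$ at parameter $s_1/r_1$, then with $y = y_1$ along $[x y_2]$ at parameter $s_2/r_2$ to dispatch the $d^2(z_2, y_1)$ term left over from the first step. After substitution, the $r_i^2$ contributions rearrange into $s_1^2 + s_2^2$, leaving a residual of the form $-(s_1 s_2 / r_1 r_2)(r_1^2 + r_2^2 - c^2)$. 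The Euclidean law of cosines applied in the comparison triangle rewrites this residual as $-2 s_1 s_2 \cos \widetilde{\angle}_{\widetilde{x}}$, and a second application of the law of cosines in $\widetilde{\triangle}\widetilde{x}\widetilde{z}_1\widetilde{z}_2$ identifies the resulting bound with $|\widetilde{z}_1 - \widetilde{z}_2|^2$; this is precisely the \catzero\ condition.

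The main obstacle is purely algebraic bookkeeping in $(\Leftarrow)$: the two applications of the hypothesis must be arranged so that after substitution the coefficients of $r_1^2$, $r_2^2$ and $c^2$ collapse neatly into the law-of-cosines form. No analytic difficulty arises, since completeness and existence of geodesics are built into the hypotheses, and the comparison triangle always exists in $\mathbb{E}^2$.
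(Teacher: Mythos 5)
Your argument is correct; note that the paper states Proposition \ref{g1} without proof (it is the standard characterization of \catzero\ spaces via the CN/Bruhat--Tits inequality), so there is no in-paper proof to compare against. Both directions of your proposal are the standard ones and the algebra checks out: in ($\Leftarrow$) the two applications of the hypothesis (first at $y=z_2$ along $[xy_1]$, then at $y=y_1$ along $[xy_2]$) do collapse to $s_1^2+s_2^2-\tfrac{s_1s_2}{r_1r_2}(r_1^2+r_2^2-c^2)=s_1^2+s_2^2-2s_1s_2\cos\widetilde{\angle}=d_0^2(\widetilde z_1,\widetilde z_2)$, and in ($\Rightarrow$) the comparison of $d(y,x_t)$ with $|\widetilde y-\widetilde x_t|$ is obtained by viewing $y$ and $x_t$ as points on the two sides $[x_0y]$ and $[x_0x_1]$ issuing from $x_0$, as Definition \ref{de3} requires. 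Two trivial points worth a sentence in a written-up version: the degenerate cases $r_i=0$ or $s_i=0$ must be handled separately (the parameters $s_i/r_i$ are then undefined, but the comparison inequality is immediate), and the paper's Definition \ref{de3} builds completeness into ``\catzero,'' which the 2-convexity inequality alone does not supply --- that discrepancy lies in the statement of the proposition, not in your proof.
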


Using Theorem \ref{t111}, we have
\begin{lemma}\cite[Lemma 1.3]{m}\label{l111111}
Let $X$ be a \catzero\ space and $F: X \to \mathbb{R}$. If $F$ is convex and lower semi-continuous, then $F$ is bounded from below on bounded subsets of $X$.
Furthermore, the infimum of $F$ on each nonempty bounded convex closed subset of $X$ is attained.
\end{lemma}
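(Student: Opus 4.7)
The plan is to use Helly's Theorem (Theorem~\ref{t111}) applied to intersections of sublevel sets of $F$ with closed balls (for the first assertion) or with the given convex set (for the second). The key observation is that since $F$ is convex and lower semi-continuous, each sublevel set $\{x : F(x) \leq c\}$ is convex and closed. Intersecting such a sublevel set with a closed ball $\overline{B_R(x_0)}$, or with a bounded closed convex subset $C$, yields a closed, bounded, convex subset of $X$, which is exactly the kind of set to which Helly's Theorem applies.

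For the first assertion I would argue by contradiction. Suppose $F$ is not bounded below on some bounded set contained in a closed ball $\overline{B_R(x_0)}$. Then for every positive integer $n$ the set
$$Y_n := \{x \in \overline{B_R(x_0)} : F(x) \leq -n\}$$
is nonempty, convex (as an intersection of two convex sets), closed (by lower semi-continuity of $F$ and closedness of the ball), and bounded. For a finite collection of indices $n_1 \leq \cdots \leq n_k$, the intersection $\bigcap_{i=1}^k Y_{n_i}$ equals $Y_{n_k}$ and is nonempty. Helly's Theorem then produces a point $x^* \in \bigcap_{n} Y_n$, giving $F(x^*) \leq -n$ for every $n$ and contradicting that $F$ takes real values.

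For the second assertion, let $C$ be a nonempty bounded closed convex subset of $X$, and set $m := \inf_{x \in C} F(x)$, which is finite by the first part. For each positive integer $n$ define
$$Z_n := \{x \in C : F(x) \leq m + 1/n\}.$$
Each $Z_n$ is nonempty by the definition of infimum, convex as an intersection of convex sets, closed by lower semi-continuity of $F$, and bounded since it lies in $C$. A finite intersection $\bigcap_{i=1}^k Z_{n_i}$ equals $Z_{\max n_i}$, hence is nonempty. Helly's Theorem yields $x^* \in \bigcap_n Z_n$, and this point satisfies $F(x^*) \leq m + 1/n$ for every $n$, forcing $F(x^*) = m$.

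The main conceptual obstacle is that \catzero\ spaces need not be locally compact, so one cannot simply take a minimizing sequence and extract a convergent subsequence. Helly's Theorem replaces this compactness argument, and the only point to verify carefully is that the sets used are simultaneously closed, convex, and bounded; this is guaranteed by the three hypotheses on $F$ and the choice to intersect with either an ambient ball or the set $C$.
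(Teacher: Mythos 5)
Your proof is correct and follows exactly the route the paper intends: the lemma is stated immediately after the remark ``Using Theorem~\ref{t111}, we have,'' and your argument applies Helly's Theorem to the nested, closed, bounded, convex sublevel sets $\{F\le -n\}\cap\overline{B_R(x_0)}$ and $\{F\le m+1/n\}\cap C$, which is precisely the intended mechanism. All the required properties (convexity and closedness of sublevel sets from convexity and lower semi-continuity of $F$, convexity of closed balls in a \catzero\ space, and the finite intersection property from nestedness) are verified, so nothing is missing.
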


To define gradient vectors of $\lambda$-convex functions, we need \emph{differentials} of $\lambda$-convex functions.

\begin{defn}
For $x \in X$ and a locally Lipschitz function $F: X \to \mathbb{R}$, a function $d_x F: T_x \to \mathbb{R}$ is called the \emph{differential} of $F$ at $x$ if for any curve $\gamma$ such that $\gamma(0)=x$ and $\gamma'(0+)$ is defined, $$d_x F( \gamma'(0+) ) = \lim_{t \to 0+} {F \circ \gamma(t) - F \circ \gamma(0) \over t}.$$
\end{defn}

In \cite[Lemma 2.4]{k2}, Kleiner showed if a $\lambda$-convex function $F : X \to \mathbb{R}$ is $L$-Lipschitz on a \catzero\ space $X$, then for every $x \in X$, there is a unique $L$-Lipschitz function $d_x F : T_x  \to \mathbb{R}$. Moreover, $d_x F$ is convex and homogeneous of degree 1.

Then we can define \emph{gradient vectors} of $\lambda$-convex functions.

\begin{defn}\label{gg2}
A tangent vector $v \in T_x $ is called the \emph{downward gradient vector} of $F$ at $x$ if
\begin{enumerate}
\item $(d_x F)(w) \geq -  \langle v,w \rangle$ for all $w \in T_x $, and
\item $(d_x F)(v) = - \langle v,v \rangle$.
\end{enumerate}
We denote $v$ by $\nabla_x(-F)$.
\end{defn}

So the geometric meaning of the (downward) gradient vector is that $F$ will be decreased fastest in the direction of this gradient and the length of the gradient vector is the rate at which $F$ decreases in that direction.

\begin{theorem}\label{g11}
Let $X$ be a \catzero\ space.
If $F$ is locally Lipschitz and $\lambda$-convex on $X$, then for any point $x \in X$, there is a unique downward gradient vector $\nabla_x (-F) \in T_x$.
\end{theorem}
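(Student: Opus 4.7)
The plan is to obtain $\nabla_x(-F)$ as the unique minimizer of the auxiliary function $\phi : T_x \to \mathbb{R}$ defined by
\[
\phi(w) := d_xF(w) + \tfrac12\|w\|^2.
\]
Since $T_x$ is itself a \catzero\ space with vertex $o_x$, applying Proposition \ref{g1} with $y=o_x$ shows that $\tfrac12\|\cdot\|^2$ is $1$-convex on $T_x$; combined with Kleiner's result that $d_xF$ is $L$-Lipschitz and convex, this makes $\phi$ continuous and $1$-convex. The $1$-homogeneity of $d_xF$ yields $d_xF(o_x)=0$ and hence $|d_xF(w)|\le L\|w\|$, so $\phi(w) \ge \tfrac12\|w\|^2 - L\|w\| \to \infty$ as $\|w\|\to\infty$; in particular every sublevel set of $\phi$ is bounded, closed, and convex.

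Existence and uniqueness of a minimizer $v \in T_x$ then follow quickly: any nonempty sublevel set $\{\phi \le c\}$ meets the hypotheses of Lemma \ref{l111111}, supplying a point realising $\inf\phi$, and $1$-convexity rules out a second minimizer since the midpoint of $[vv']$ would satisfy $\phi(\mathrm{mid}) \le \phi(v) - \tfrac18 d(v,v')^2 < \phi(v)$.

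The remaining and main task is to identify this $v$ with $\nabla_x(-F)$. Combining $1$-convexity of $\phi$ with minimality along the geodesic from $v$ to an arbitrary $w\in T_x$ (divide by $t$, let $t\to 0+$) yields $\phi(w)\ge\phi(v)+\tfrac12 d(v,w)^2$ for every $w$. Since $T_x$ is \catzero, comparing the Alexandrov angle at $o_x$ in the triangle $o_xvw$ with its Euclidean model gives $d(v,w)^2 \ge \|v\|^2 + \|w\|^2 - 2\langle v,w\rangle$; substituting and simplifying produces
\[
d_xF(w) \ge d_xF(v) + \|v\|^2 - \langle v,w\rangle \qquad \text{for all } w \in T_x.
\]
To finish I exploit $1$-homogeneity of $d_xF$: putting $w=o_x$ in the displayed inequality gives $d_xF(v)+\|v\|^2 \le 0$, while replacing $w$ by $tw$ ($t>0$) and using $d_xF(tw)=td_xF(w)$, $\langle v,tw\rangle=t\langle v,w\rangle$ yields $t(d_xF(w)+\langle v,w\rangle)\ge d_xF(v)+\|v\|^2$; setting $w=v$ and letting $t>1$ forces $d_xF(v)+\|v\|^2 \ge 0$, hence equality, which is exactly condition~(2) of Definition \ref{gg2}. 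Condition~(1) is then the displayed inequality with its now-vanishing term $d_xF(v)+\|v\|^2$ dropped.

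The main obstacle, as I see it, is not any isolated calculation but rather assembling the correct ingredients inside the tangent cone: one has to recognise $1$-convexity of $\tfrac12\|\cdot\|^2$ from Proposition \ref{g1} applied at the vertex, use the \catzero\ structure of $T_x$ to upgrade minimality of $v$ into the inner-product inequality above, and then exploit $1$-homogeneity of $d_xF$ -- a feature specific to the differential and not enjoyed by $F$ itself -- to pin down both gradient conditions simultaneously.
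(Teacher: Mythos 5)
Your construction is correct but follows a genuinely different route from the paper. The paper splits the two halves: for uniqueness it plays the two defining conditions of Definition \ref{gg2} off against each other for two candidate gradients $v,v'$ and deduces $\cos\theta=1$ and $\|v\|=\|v'\|$; for existence it minimizes $d_xF$ over the unit ball of $T_x$ (via Lemma \ref{l111111}), rescales the minimizing direction $\xi$ by $|r|$ where $r=\inf_{\Sigma_x}d_xF$, and verifies condition (1) through a support inequality $d_xF(\eta)\ge d_xF(\xi)\cos s$ obtained from convexity of $d_xF$ along geodesics of directions, with a separate treatment of the antipodal case $s=\pi$. You instead minimize the proximal-type functional $\phi=d_xF+\tfrac12\|\cdot\|^2$ on all of $T_x$, using $1$-convexity of $\tfrac12\|\cdot\|^2$ from Proposition \ref{g1} applied at the vertex, and then convert the first-order minimality inequality $\phi(w)\ge\phi(v)+\tfrac12 d(v,w)^2$ into the gradient conditions via the cone law of cosines and $1$-homogeneity. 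This is the standard resolvent/Moreau-envelope characterization transplanted to the tangent cone; it buys you a single variational object that encodes both defining conditions at once and avoids the case analysis at angle $\pi$, at the cost of needing coercivity of $\phi$ (which you correctly get from $|d_xF(w)|\le L\|w\|$). All the individual steps check out, including the homogeneity argument forcing $d_xF(v)+\|v\|^2=0$.

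There is one genuine gap: uniqueness. You prove that the minimizer of $\phi$ is unique and that this minimizer is \emph{a} downward gradient vector, but the theorem asserts that the downward gradient vector itself is unique, and for that you need the converse implication: every $v'$ satisfying conditions (1) and (2) of Definition \ref{gg2} minimizes $\phi$. Without it, nothing you have written rules out a second gradient vector that is not a minimizer of $\phi$. The repair is short: if $v'$ satisfies (1) and (2), then for every $w$,
\begin{equation*}
\phi(w)=d_xF(w)+\tfrac12\|w\|^2\ \ge\ -\langle v',w\rangle+\tfrac12\|w\|^2\ \ge\ -\|v'\|\,\|w\|+\tfrac12\|w\|^2\ \ge\ -\tfrac12\|v'\|^2=\phi(v'),
\end{equation*}
so $v'$ is a minimizer of $\phi$ and hence equals your $v$ by the strict convexity argument. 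You should add this line (or reproduce the paper's direct two-line uniqueness computation) to make the uniqueness claim complete.
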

\begin{proof}
For uniqueness, if $v, v^\prime$ are two distinct downward gradient vectors of $F$ at $x$, then
$$|| v ||^2 = -(d_x F)(v) \leq \langle v ,v^\prime \rangle,$$   $$|| v' ||^2 = -(d_x F)(v^\prime) \leq \langle v ,v^\prime \rangle.$$
Then these inequalities imply that $|| v ||=0$ if and only if $\langle v ,v' \rangle=0$, hence if and only if $||v'||=0$ by the inner product definition. It follows that $v= v' = o_x$.
Otherwise if $||v|| >0$ and $||v'|| >0$, by the inner product definition, we have $$ || v ||^2\leq || v || || v' || \cos \theta, \; ||v'||^2 \leq ||v|||| v' || \cos \theta,$$ where $\theta$ is the angle between $v$ and $v'$. Therefore $$1 \leq \cos^2\theta$$ since $|| v || \leq || v' || \cos \theta \leq ||v|| \cos^2 \theta.$
Since $\cos \theta > 0$ because $0 < || v || \leq || v' || \cos \theta$, we obtain $\cos \theta = 1$ and $\theta =0$. Thus $v=v'$.

For existence, first if $d_x F \geq 0$ then $\nabla_x(-F)$ is defined to be $o_x$. Otherwise, let $$r=\inf_{\eta \in \Sigma_x } (d_x F)(\eta) < 0$$ where $\Sigma_x$ is the direction space at $x$. Let $S_x$ be the unit ball $ \{ w \in T_x  | \| w \| \leq 1 \}$ of the \catzero\ space $T_x$.
Since $d_x F$ is Lipschitz and convex on $T_x$, $d_x F$ attains its infimum on the nonempty bounded convex closed subset $S_x$ by Lemma \ref{l111111}. Since $d_x F$ is homogeneous, $\inf_{S_x} d_x F = r$. So we have a minimum direction $\xi$ such that $ d_x F ( \xi ) = r$.
Then $v= ( \xi , |r| )$ satisfies the definition of the downward gradient vector, as follows:

\begin{enumerate}
\item When $\xi$ is the minimum point of $d_x F$ on the closed ball
$S_x$, the convexity of $d_x F$ gives the support inequality
$$d_x F(\eta) \ge d_x F( \xi)\cos(s) = r\langle \xi, \eta \rangle,$$
where $\eta \in \Sigma_x$ and $s = d_{\Sigma_x}( \xi, \eta ) < \pi$.

From the support inequality the proof of defining property (1)
for the gradient vector $v$ easily follows from the
homogeneity of $d_x F$ and $\langle \cdot, \cdot \rangle$:

For $\eta \in \Sigma_x$ and $s < \pi$,
$$d_x F(\eta) \ge r\langle \xi, \eta \rangle = - |r|\langle \xi, \eta \rangle
= -\langle v, \eta \rangle.$$

When $d_{\Sigma_x}( \xi, \eta) = \pi$, then the geodesic from
$\xi$ to $\eta$ goes through the origin $o_x$ and the inequality we want
is $d_x F(\eta) + d_x F( \xi) \ge 0$, as follows from the convexity of $d_x F$
on that geodesic.
\item $d_x F ( v ) = |r| d_x F ( \xi ) = |r|r = - |r|^2 \langle \xi , \xi \rangle = - \langle v , v \rangle$.
\end{enumerate}
\end{proof}

We have an important lemma about gradient vectors at different points.
\begin{lemma}\cite{l}\cite[Lemma 1.3.3]{p}\label{l222}
Let $X$ be a \catzero\ space and $F$ be locally Lipschitz and $\lambda$-convex. Then for any two different points $x$ and $y$,
$$  \langle \xi_1 , \nabla_{x} (- F) \rangle + \langle \xi_2 , \nabla_{y} (- F) \rangle   \geq  \lambda d(x,y) $$
where $\xi_1$ is the direction of $[xy]$ at $x$ and $\xi_2$ is the direction of $[yx]$ at $y$.
\end{lemma}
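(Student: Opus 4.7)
The plan is to reduce the claim to the standard one-variable convexity inequality along the geodesic $[xy]$, and then apply the defining support property of the gradient vector.

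Let $L=d(x,y)$ and let $\gamma\colon [0,L]\to X$ be the unit-speed parameterization of $[xy]$, so $\gamma(0)=x$, $\gamma(L)=y$, and the right-tangent vectors satisfy $\gamma'(0+)=\xi_1$ (with $\|\xi_1\|=1$) at $x$, and analogously $\xi_2$ is the unit right-tangent at $y$ of the reverse geodesic. First I would apply the $\lambda$-convexity inequality (\ref{eee}) to the pair $(x,y)$ at parameter $s\in (0,1)$:
\begin{equation*}
F(\gamma(sL)) \;\le\; (1-s)F(x) + sF(y) - s(1-s)\,\frac{\lambda L^{2}}{2}.
\end{equation*}
Subtracting $F(x)$, dividing by $sL$, and letting $s\to 0^{+}$ gives
\begin{equation*}
d_x F(\xi_1)\;=\;\lim_{s\to 0^{+}}\frac{F(\gamma(sL))-F(x)}{sL}\;\le\;\frac{F(y)-F(x)}{L}-\frac{\lambda L}{2}.
\end{equation*}
Running the same argument from $y$ toward $x$ yields
\begin{equation*}
d_y F(\xi_2)\;\le\;\frac{F(x)-F(y)}{L}-\frac{\lambda L}{2}.
\end{equation*}
Adding these two inequalities, the $F(x),F(y)$ terms cancel, producing the key estimate
\begin{equation*}
d_x F(\xi_1)+d_y F(\xi_2)\;\le\;-\lambda L.
\end{equation*}

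Next I would invoke defining property (1) of the downward gradient in Definition \ref{gg2} at each endpoint, applied to the unit vectors $\xi_1\in T_x$ and $\xi_2\in T_y$:
\begin{equation*}
d_x F(\xi_1)\;\ge\;-\langle \nabla_x(-F),\xi_1\rangle,\qquad d_y F(\xi_2)\;\ge\;-\langle \nabla_y(-F),\xi_2\rangle.
\end{equation*}
Combining with the previous display gives
\begin{equation*}
-\langle \nabla_x(-F),\xi_1\rangle-\langle \nabla_y(-F),\xi_2\rangle\;\le\;-\lambda d(x,y),
\end{equation*}
which upon multiplying by $-1$ is exactly the claimed inequality.

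The only subtle point is the first step: one must be sure that the limit defining $d_x F(\xi_1)$ along the geodesic $\gamma$ actually exists and equals what the one-sided difference quotient gives. This is automatic here because $\gamma$ is a geodesic issuing from $x$, so its right-tangent vector $\gamma'(0+)=\xi_1$ exists by definition, and then $d_x F$ on that tangent vector is given precisely by the limit we computed; local Lipschitzness of $F$ ensures the difference quotient is bounded and the limit is finite. No further analytic work is needed; the heart of the argument is just chaining the convexity inequality with the support property of $\nabla(-F)$ at the two endpoints.
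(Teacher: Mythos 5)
Your proposal is correct and follows essentially the same route as the paper's proof: both arguments chain the $\lambda$-convexity inequality along $[xy]$ (giving $d_xF(\xi_1)+d_yF(\xi_2)\le-\lambda\,d(x,y)$) with the support property $(d_xF)(w)\ge-\langle\nabla_x(-F),w\rangle$ from Definition \ref{gg2} at each endpoint. The only difference is the order in which the two ingredients are invoked, which is immaterial.
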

\begin{proof}
By definition, we have $$ \langle \xi_1 , \nabla_{x} (- F) \rangle \geq - (d_x F)( \xi_1 ) = - \lim_{t \to 0+} {F \circ \gamma(t) - F \circ \gamma(0) \over t} $$ where $\gamma = [xy]$.
Since $F$ is $\lambda$-convex, we get $$\langle \xi_1 , \nabla_{x} (- F) \rangle \geq {F(x) - F( \gamma( d(x,y) ) ) +  \lambda d(x,y)^2 /2 \over d(x,y) }.$$
Doing similarly for $\langle \xi_2 , \nabla_{y} (- F) \rangle$ and adding two inequalities, it is proved.
\end{proof}

\begin{defn}\label{d11111}
Let a function $F: X \to \mathbb{R}$ be locally Lipschitz and $\lambda$-convex on $X$. A locally Lipschitz curve $\gamma : I \to X$ is a \emph{gradient curve} of $F$ if for all $t \in I$,
there exists the right-side tangent vector $\gamma^\prime(t+)$ and it is equal to the downward gradient vector $\nabla_{\gamma(t)} (- F)$ at $\gamma(t)$.
\end{defn}

\section{Distance between two fixed-time gradient curves}
First, we will see Mayer's work showing existence and uniqueness of time-independent gradient curves. Then
we are going to look at two results about the distance between two fixed-time gradient curves: Corollary \ref{g77777} and Theorem \ref{g21}. These results will be used to prove our main Theorem \ref{g7}. To emphasize the two results, we name them Distance I and Distance II.

In this section, we assume that $(X,d)$ is a \catzero\ space. For a function $F=F(t,x)$ on $\real \times X$, let $F_t$ be given by $F_t(x):=F(t,x)$. Then
$F$ is $\lambda$-\emph{convex} on $X$ if the function $F_t$ is $\lambda$-convex.

In order to get Distance II, we will need the H\"{o}lder continuity of gradient vectors $\nabla_{x} ( - F_{t})$ in the time variable $t$.
This need is illustrated by Example \ref{ex111} below.

Distance I is essentially a result of Mayer about time-independent gradient curves. First, we need to define a \emph{step-energy function} of $F$.

\begin{defn}\cite[Def. 3]{m}\label{g2}
Given an initial position $x_0 \in X$, an initial time $t_0 \in \mathbb{R}$ and a time gap $h > 0$, the \emph{step-energy function} $E_{t_0,x_0,h} : X \to \mathbb{R}$ at $(t_0,x_0)$ is defined by
$$x \mapsto F(t_0,x)+ {1 \over 2h} d^2(x_0,x).$$
\end{defn}

Since for sufficiently small $h$, $E_{t_0,x_0,h}$ is a convex function with bounded sublevel sets, we can get a minimum value of $E$.
Differently from Mayer's setting, we need to consider time $t_0$ as variable.
Mayer does not use the term ``$\lambda$-convex'' and he defines his condition on the function $F$ in terms of the parameter $S = -\lambda /2$. He also restricts to the case $S > 0$, while we do not make any restriction on the sign of $\lambda$.
\begin{prop}\cite[Th. 1.8]{m}\label{g3}
Suppose that $F$ is $\lambda$-convex and locally Lipschitz on a \catzero\ space $X$. Let $h>0$ and if $\lambda < 0$, let $h < -1/2 \lambda$.
Then $E_{t_0,x_0,h}$ has a unique minimum point on $X$.
\end{prop}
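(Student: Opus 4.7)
The plan is to show that $E := E_{t_0,x_0,h}$ is $\mu$-convex with $\mu>0$ and has bounded sublevel sets, then apply Lemma \ref{l111111}. By Proposition \ref{g1}, the map $x \mapsto d^2(x_0,x)$ is $2$-convex, so $x \mapsto \frac{1}{2h}d^2(x_0,x)$ is $(1/h)$-convex. Since a sum of a $\lambda$-convex and a $\lambda'$-convex function is $(\lambda+\lambda')$-convex, $E$ is $\mu$-convex with $\mu := \lambda + 1/h$. Under the hypothesis $\mu > 0$: if $\lambda \geq 0$ this is immediate from $h>0$; if $\lambda < 0$, the bound $h < -1/(2\lambda)$ gives $1/h > -2\lambda$, whence $\mu > -\lambda > 0$.

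Next I would establish coercivity. Since $F(t_0,\cdot)$ is locally Lipschitz and $\frac{1}{2h}d^2(x_0,\cdot)$ is locally Lipschitz, $E$ is locally Lipschitz. Fix a basepoint $p\in X$ and a Lipschitz constant $C_p$ for $E$ on a small ball around $p$. For arbitrary $x\in X$, let $\gamma:[0,d(p,x)]\to X$ be the unit-speed geodesic from $p$ to $x$. A direct computation using $ts_0^2+(1-t)s_1^2 = (ts_0+(1-t)s_1)^2 + t(1-t)(s_0-s_1)^2$ shows that the $\mu$-convex inequality for $E$ along $\gamma$ is equivalent to ordinary convexity of $s\mapsto E(\gamma(s)) - \tfrac{\mu}{2}s^2$. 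In particular the right derivative of $E\circ\gamma$ at $0$ exists, is bounded in absolute value by $C_p$, and yields the support estimate
\[
E(x) \;\geq\; E(p) - C_p\, d(p,x) + \frac{\mu}{2}\, d(p,x)^2.
\]
Since $\mu>0$ this grows to $+\infty$ as $d(p,x)\to\infty$, so every sublevel set of $E$ is bounded; it is also closed (by continuity of $E$) and convex (by convexity of $E$).

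Finally, fix $M > \inf_X E$. The set $\{E \leq M\}$ is then a nonempty bounded closed convex subset of $X$, and Lemma \ref{l111111} guarantees that $E$ attains its infimum on this set; this value is the global minimum of $E$ on $X$. For uniqueness, strict $\mu$-convexity rules out two minimizers: if $y_1\neq y_2$ both minimized $E$, then applying Definition \ref{a6} with $t=1/2$ to the midpoint $y_{1/2}$ of $[y_1 y_2]$ gives
\[
E(y_{1/2}) \;\leq\; \frac{E(y_1)+E(y_2)}{2} - \frac{\mu}{8}\,d(y_1,y_2)^2 \;<\; \inf_X E,
\]
a contradiction. The only real work is the coercivity step, which requires bridging the local Lipschitz bound (to control the right derivative at the basepoint) with the global $\mu$-convex inequality (to propagate a uniform quadratic lower bound); everything else then follows directly from Lemma \ref{l111111} and strict convexity.
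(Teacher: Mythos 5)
Your proof is correct, and since the paper states Proposition \ref{g3} only as a citation of Mayer's Theorem 1.8, there is no in-text proof to match; what you supply is a valid self-contained argument. Each step checks out: the decomposition into a $\lambda$-convex summand plus the $(1/h)$-convex term $\tfrac{1}{2h}d^2(x_0,\cdot)$ (via Proposition \ref{g1}) gives $\mu$-convexity with $\mu=\lambda+1/h>0$ exactly under the stated restriction on $h$; the reparametrization identity correctly reduces $\mu$-convexity along a unit-speed geodesic to convexity of $s\mapsto E(\gamma(s))-\tfrac{\mu}{2}s^2$, and combining the resulting support line with the local Lipschitz bound at the basepoint yields the coercive estimate $E(x)\ge E(p)-C_p\,d(p,x)+\tfrac{\mu}{2}d(p,x)^2$; uniqueness by the midpoint inequality is standard. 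The one stylistic difference from Mayer's own route is the existence step: Mayer exploits the uniform ($\mu$-)convexity to show that any minimizing sequence is Cauchy (the midpoint inequality forces $d(x_n,x_m)^2\le\tfrac{4}{\mu}\bigl(E(x_n)+E(x_m)-2\inf E\bigr)$) and then invokes completeness and lower semicontinuity, whereas you route existence through Lemma \ref{l111111}, which in this paper rests on Helly's theorem. Your version requires continuity-based coercivity to produce a bounded sublevel set but avoids the Cauchy-sequence computation; Mayer's version works under mere lower semicontinuity of $F$ and needs only boundedness below on bounded sets. Both are legitimate, and in the locally Lipschitz setting of this paper your argument loses no generality.
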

\begin{defn}
We will denote the unique minimum point of the step-energy function $E_{t_0,x_0,h}$ by $$J(t_0,x_0,h).$$
\end{defn}
The function from $X$ to $X$ given by $x \mapsto J(t_0,x,h)$, which we will call the \emph{discrete flow function} with time gap $h$, was studied in detail by Mayer.
We have the following result, extending \cite[Lemma 1.12]{m} to $\lambda \ge 0$, a longer interval for $h$, and giving a slightly smaller Lipschitz constant instead of $1 / \sqrt{ 1 +  2 \lambda h}$.
\begin{lemma}\cite[Lemma 1.12]{m}\label{g6}
Let $h$ be as in Proposition \ref{g3}. Then
for any $t_0 \in \mathbb{R}$, the
discrete flow
function $x \mapsto J(t_0,x,h)$ is $ $ $(1 + \lambda h)^{-1}$-Lipschitz.
\end{lemma}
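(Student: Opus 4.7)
My plan is to reinterpret $J(t_0, \cdot, h)$ as the resolvent (proximal map) of $F_{t_0}$ with time step $h$, and then to adapt the classical Hilbert-space argument for the Lipschitz constant of the proximal operator, using a CAT(0) four-point inequality in place of Cauchy--Schwarz. Write $y_i := J(t_0, x_i, h)$ and $G_i(y) := F_{t_0}(y) + \tfrac{1}{2h}\, d^2(x_i, y)$, so that $y_i$ is the unique minimizer of $G_i$ by Proposition~\ref{g3}. By Proposition~\ref{g1} the function $y \mapsto d^2(x_i, y)/(2h)$ is $(1/h)$-convex, so $G_i$ is $\mu$-convex with $\mu := \lambda + 1/h$; the hypothesis on $h$ in Proposition~\ref{g3} forces $\mu > 0$.

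First I would establish the sharp quadratic support inequality at a minimum: for any $\mu$-convex $G$ with minimum $y^*$, inserting the geodesic from $y^*$ to an arbitrary $y$ into the $\mu$-convexity inequality and using $G(y_t) \geq G(y^*)$, then dividing by $t$ and letting $t \to 0^+$, yields
\begin{equation*}
G(y) \;\geq\; G(y^*) + \tfrac{\mu}{2}\, d^2(y^*, y).
\end{equation*}
Applying this to $G_0$ at $y = y_1$, to $G_1$ at $y = y_0$, and adding, the $F_{t_0}$ contributions cancel and leave
\begin{equation*}
\tfrac{1}{2h}\bigl[d^2(x_0, y_1) + d^2(x_1, y_0) - d^2(x_0, y_0) - d^2(x_1, y_1)\bigr] \;\geq\; (\lambda + 1/h)\, d^2(y_0, y_1).
\end{equation*}

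The essential second step is the CAT(0) four-point inequality
\begin{equation*}
d^2(x_0, y_1) + d^2(x_1, y_0) - d^2(x_0, y_0) - d^2(x_1, y_1) \;\leq\; 2\, d(x_0, x_1)\, d(y_0, y_1),
\end{equation*}
which is the nonpositive-curvature analog of the Euclidean identity asserting that the left-hand side equals $2\langle x_0 - x_1,\, y_0 - y_1 \rangle$, followed by Cauchy--Schwarz; it follows from the four-point (Reshetnyak) subembedding of any quadrilateral in $X$ into $\mathbb{E}^2$. Substituting into the previous inequality and dividing through by $d(y_0, y_1)$ (the bound being trivial when $y_0 = y_1$) yields $d(y_0, y_1) \leq d(x_0, x_1)/(1 + \lambda h)$, as required. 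The main obstacle is this second step: cruder estimates (CN together with the triangle inequality) recover only Mayer's original constant $1/\sqrt{1 + 2\lambda h}$, and extracting the improved constant $1/(1 + \lambda h)$ requires the sharp four-point bound, which is precisely where the CAT(0) geometry is really used.
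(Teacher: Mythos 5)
Your argument is correct, and it is worth noting that the paper itself offers no proof of this lemma: it cites Mayer's Lemma 1.12 and simply asserts the strengthened constant $(1+\lambda h)^{-1}$ in place of $(1+2\lambda h)^{-1/2}$, together with the extension to all signs of $\lambda$. Your proof therefore supplies a justification the paper omits, and it does so by the standard proximal-map route: the quadratic growth estimate $G(y)\ge G(y^*)+\tfrac{\mu}{2}d^2(y^*,y)$ at the minimizer of a $\mu$-convex function (with $\mu=\lambda+1/h>0$ guaranteed by the hypothesis of Proposition \ref{g3}), symmetrization over the two step-energy functions, and the quasilinearization Cauchy--Schwarz inequality $d^2(x_0,y_1)+d^2(x_1,y_0)-d^2(x_0,y_0)-d^2(x_1,y_1)\le 2\,d(x_0,x_1)\,d(y_0,y_1)$; the arithmetic then yields exactly $d(y_0,y_1)\le(1+\lambda h)^{-1}d(x_0,x_1)$. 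The only point needing more care is your justification of the four-point inequality. It is a true fact about \catzero\ spaces (the Berg--Nikolaev quasilinearization inequality), but the Reshetnyak subembedding argument works only if the quadrilateral is taken in the cyclic order $x_0,x_1,y_1,y_0$, so that $d(x_0,x_1)$ and $d(y_0,y_1)$ and the two negatively-signed distances are sides (hence preserved) while the two positively-signed mixed distances are diagonals (hence only increased in the plane); with the ordering in which all four mixed distances are sides, the diagonals $|\tilde x_0-\tilde x_1|$ and $|\tilde y_0-\tilde y_1|$ can only be larger than their counterparts in $X$, and the estimate goes the wrong way. Alternatively, the inequality follows from tools already in the paper: apply the first variation formula to $t\mapsto d^2(x_0,\gamma(t))-d^2(x_1,\gamma(t))$ along the geodesic $\gamma$ from $y_0$ to $y_1$ and bound the derivative by Lemma \ref{l1}, using that the map $z\mapsto(\xi_z,d(y,z))\in T_y$ does not increase distance in a \catzero\ space. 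With that step secured, the proof is complete.
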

This lemma is one of the primary tools to show that discrete flow converges well when $h$ goes to zero.

For any $y \in X$, let
\begin{equation}\label{e222}
\begin{split}
A(t_0) &= - \min \{0, \liminf_{ d(x,y) \to \infty}  {F(t_0,x) \over d^2(x,y)}  \}, \\
I_{A(t_0)} &=  \left\{ \begin{array}{ll}
 (0,\infty)& \mbox{ if $A(t_0) = 0$ },  \\
 (0, {1 \over 16 A(t_0) }]& \mbox{ if $A(t_0) > 0$ }.
\end{array}
\right.
\end{split}
\end{equation}
Note that $A(t_0)$ is independent of $y$ because of the triangle inequality.

Mayer obtains only weak gradient curves in his main theorem since he assume the weaker condition that $F$ is semi-continuous. Tangent vectors and gradient vectors were not contained in the definition of the weak gradient curve. If we assume $F$ is locally Lipschitz on $X$, by Theorem \ref{g11}, we have gradient vectors everywhere and Mayer's result can be modified for a gradient curve as follows (see Definition \ref{d11111}).
We will see this modification in Section 7.
\begin{theorem}[Existence and Uniqueness of time-independent gradient curves]\cite[Th. 1.13]{m}\label{g777}
Let $X$ be a \catzero\ space.
Suppose that $F$ is $\lambda$-convex and locally Lipschitz on $X$.
For an initial position $x_0$ and fixed-time $t_0$, let $x^n_0(s)= x_0$ and
$$x^n_i(s)=J(t_0,x^n_{i-1}(s),s/n)$$ where $i=1, \cdots, n$. Then there is a unique gradient curve $\mu: I_{A(t_0)} \to X$ of the function $F(t_0, \cdot \; )$ defined by $\mu(s) := \lim_{ n \to \infty} x^n_n(s)$ such that $\mu(0)=x_0$.
\end{theorem}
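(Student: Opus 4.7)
The plan is to follow the time-discretization (Crandall--Liggett/Mayer) scheme, broken into four blocks: (i) a priori bounds confining the iterates $\{x^n_i(s)\}$ to a compact set on which $F$ is Lipschitz, (ii) Cauchy convergence of $x^n_n(s)$ as $n\to\infty$, (iii) verification that the limit $\mu$ is a gradient curve in the sense of Definition \ref{d11111}, and (iv) uniqueness. The main inputs are Proposition \ref{g3} (existence and uniqueness of the step-energy minimizer $J$), Lemma \ref{g6} (the $(1+\lambda h)^{-1}$-Lipschitz constant for $x\mapsto J(t_0,x,h)$), Proposition \ref{g1} (2-convexity of squared distance), and Lemma \ref{l222} for uniqueness.

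For (i) and (ii), the step bound $d(x^n_{i-1},x^n_i)\le C\,s/n$ will follow from the minimization inequality $E_{t_0,x^n_{i-1},s/n}(x^n_i)\le E_{t_0,x^n_{i-1},s/n}(x^n_{i-1})$ together with the quadratic lower bound on $F_{t_0}$ encoded in $A(t_0)$; this also keeps the iterates in a bounded region. Iterating Lemma \ref{g6} makes the composition $x_0\mapsto x^n_n(s)$ a $(1+\lambda s/n)^{-n}$-Lipschitz map, with limit $e^{-\lambda s}$. To upgrade to Cauchyness in $n$, I would compare the $n$-scheme and the $2n$-scheme by expressing a step of gap $h$ in terms of two steps of gap $h/2$ (with a controlled error), telescoping the discrepancy along the grid, and bounding each term using Lemma \ref{g6} combined with the quadratic comparison from Proposition \ref{g1}. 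This produces $d(x^n_n(s),x^{2n}_{2n}(s))\to 0$ uniformly on compact subsets of $I_{A(t_0)}$, defining a locally Lipschitz $\mu(s):=\lim_n x^n_n(s)$.

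For (iii), I would combine the first-order optimality condition for $y=J(t_0,x,h)$ with the first variation formula $d_y(d^2(x,\cdot))(\hat\eta)=-2\,d(x,y)\cos\alpha_{\hat\eta}$, where $\alpha_{\hat\eta}$ is the angle at $y$ between $\hat\eta\in\Sigma_y$ and $[yx]$, to extract the variational inequality $d_y F_{t_0}(\hat\eta)\ge (d(x,y)/h)\cos\alpha_{\hat\eta}$. Passing to the limit along the discrete iterates, the vector of length $d(x^n_{i-1},x^n_i)/(s/n)$ pointing at $x^n_i$ opposite to $[x^n_i\,x^n_{i-1}]$ is a candidate for $\nabla_{\mu(s)}(-F_{t_0})$ whose defining inequalities match Definition \ref{gg2} in the limit. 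Local Lipschitzness of $F$ (absent from Mayer's weaker hypotheses) together with Theorem \ref{g11} ensures the downward gradient is unique at every point, so the right-side tangent vector $\mu'(s+)$ exists and equals $\nabla_{\mu(s)}(-F_{t_0})$, upgrading Mayer's weak gradient curve to a genuine one.

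For (iv), let $\mu_1,\mu_2$ be two gradient curves with $\mu_1(0)=\mu_2(0)=x_0$, and set $\phi(s):=d(\mu_1(s),\mu_2(s))$. The First Variation Formula applied to $\phi$ expresses $\phi'(s+)$ as $-\cos\alpha_1-\cos\alpha_2$ times the common speeds of the two geodesics, and when this is rewritten in terms of the two downward gradient vectors, Lemma \ref{l222} gives $\phi'(s+)\le -\lambda\,\phi(s)$; a Gr\"onwall argument then forces $\phi\equiv 0$. The main obstacle is block (ii): on a \catzero\ space one cannot subtract iterates, so the Crandall--Liggett telescoping must be reorganized purely through 2-convexity of $d^2(y,\,\cdot\,)$ and the Lipschitz bound of Lemma \ref{g6}. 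This is precisely the ``very complicated'' step the authors explicitly wish to avoid repeating in their time-dependent treatment of Section 5.
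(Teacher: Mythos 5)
Your outline reproduces Mayer's Crandall--Liggett scheme from scratch, but the paper does not do this: Theorem \ref{g777} is handled entirely by citation. The existence, uniqueness and convergence of the discrete scheme $x^n_i(s)$, together with the metric identities \eqref{e21} and \eqref{e22} satisfied by the limit, are imported wholesale from Mayer (restated here as Theorem \ref{g66}); the only new mathematics the paper supplies is the short Proposition in Section 7, which upgrades Mayer's \emph{weak} gradient curve to a gradient curve in the sense of Definition \ref{d11111}. That upgrade does not go through the discrete variational inequalities as your block (iii) proposes; instead it takes the tangent vectors $w_i$ of the geodesics $[m(t)\,m(t_i)]$, uses \eqref{e21}--\eqref{e22} to show $\|w_i\|\to|\nabla_-G|(x)$ and $d_xG(w_i)\to-(|\nabla_-G|(x))^2$, and then uses the defining inequality of Definition \ref{gg2} to force $\cos\theta_i\to1$, where $\theta_i$ is the angle between $w_i$ and the (unique, by Theorem \ref{g11}) downward gradient. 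This is shorter and cleaner than extracting the gradient from first-order optimality of the step-energy minimizers and passing to the limit, which would require uniform control you have not set up.

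The substantive gap in your proposal is block (ii). Saying that one compares the $n$-scheme and the $2n$-scheme ``with a controlled error'' and telescopes is not a proof: the quantitative resolvent comparison estimate on a \catzero\ space --- obtained by combining the $2$-convexity of $d^2(y,\cdot)$, the $\lambda$-convexity of $F_{t_0}$, and Lemma \ref{g6} into an explicit bound on $d(x^n_n(s),x^m_m(s))$ that vanishes as $n,m\to\infty$ --- \emph{is} the content of Mayer's Theorem 1.13, and it occupies a large part of his paper. (Note also that Mayer must compare arbitrary $n$ and $m$, not just $n$ and $2n$; dyadic refinement alone does not give a Cauchy sequence over all of $\mathbb N$ without an extra interpolation step, which is exactly the kind of detail that cannot be waved through.) You correctly identify this as ``the main obstacle,'' but identifying an obstacle is not the same as overcoming it; as written, the heart of the existence proof is missing. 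Your uniqueness argument (first variation plus Lemma \ref{l222} plus Gr\"onwall) is correct and coincides with the contraction estimate the paper uses elsewhere (Corollary \ref{g77777} and Claim 7 of Theorem \ref{g7}).
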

Note that for simple notation, we use $x^n_i(s)$ even though it is also dependent on $x_0$. For each $n$, there are $n+1$ points of $x^n_i(s)$'s and $\lim_{ n \to \infty} x^n_n(s)$ is the limit of
the sequence $\{x^n_n(s) | n \in \mathbb{N} \}$ where $x^n_n(s)$ is the last point of $x^n_i(s)$'s.
\begin{cor}[Distance I]\cite[Th. 2.1]{m}\label{g77777}
Let $\mu_y$ be the gradient curve of the function $x \mapsto F(t_0,x)$ where $\mu_y(0) =  y$. Then $$d( \mu_{x_1}(s) , \mu_{x_2}(s) ) \leq e^{- \lambda s } d( x_1, x_2 ).$$
\end{cor}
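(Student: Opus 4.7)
The plan is to exploit the discrete approximation scheme in Theorem \ref{g777} together with the Lipschitz bound on the discrete flow function from Lemma \ref{g6}, and then pass to the limit. The underlying point is that $(1+\lambda h)^{-1}$ behaves like $e^{-\lambda h}$ for small $h$, so iterating the discrete flow $n$ times with time step $s/n$ produces a factor that converges to $e^{-\lambda s}$.

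First I would fix $s \in I_{A(t_0)}$ and $x_1,x_2 \in X$, and choose $n$ large enough that $s/n$ satisfies the hypothesis of Proposition \ref{g3}; in particular, if $\lambda<0$ we need $s/n < -1/(2\lambda)$, which holds for all sufficiently large $n$. For such $n$, define the iterated discrete flow $\Phi_n \colon X \to X$ by $\Phi_n(x) = x^n_n(s)$ where $x^n_0(s) = x$ and $x^n_i(s) = J(t_0, x^n_{i-1}(s), s/n)$. By Lemma \ref{g6}, each map $x \mapsto J(t_0,x,s/n)$ is $(1+\lambda s/n)^{-1}$-Lipschitz, so composing $n$ times gives
\begin{equation*}
d\bigl(\Phi_n(x_1), \Phi_n(x_2)\bigr) \leq \bigl(1 + \lambda s/n\bigr)^{-n}\, d(x_1,x_2).
\end{equation*}

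Next I would pass to the limit $n\to\infty$. By Theorem \ref{g777}, $\Phi_n(x_i) \to \mu_{x_i}(s)$ for $i=1,2$, so the continuity of the metric $d$ together with the elementary limit $(1+\lambda s/n)^{-n} \to e^{-\lambda s}$ yields
\begin{equation*}
d\bigl(\mu_{x_1}(s), \mu_{x_2}(s)\bigr) \leq e^{-\lambda s}\, d(x_1,x_2),
\end{equation*}
which is the desired bound.

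The only delicate point is ensuring that Lemma \ref{g6} really applies uniformly for every one of the $n$ iterated steps: Lemma \ref{g6} gives the Lipschitz estimate for the single-step map $x \mapsto J(t_0,x,h)$, so composing does not require any additional hypothesis beyond $h = s/n$ being admissible, which is handled by taking $n$ large. Apart from this book-keeping, the main obstacle is purely conceptual rather than technical: one must believe that the discrete scheme from Theorem \ref{g777} actually converges to the gradient curve, which is precisely what Mayer's theorem provides for free.
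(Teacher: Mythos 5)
Your argument is correct: iterating the $(1+\lambda s/n)^{-1}$-Lipschitz bound of Lemma \ref{g6} over the $n$ steps of the discrete scheme and passing to the limit via Theorem \ref{g777} (using $(1+\lambda s/n)^{-n}\to e^{-\lambda s}$ and continuity of $d$) is exactly the standard contraction-plus-Crandall--Liggett route, which is what the cited result of Mayer rests on; the paper itself offers no independent proof, only the citation. Your book-keeping on admissibility of $h=s/n$ for large $n$ when $\lambda<0$ is also the right (and only) point needing care, so nothing is missing.
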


In order to obtain Distance II, we are going to assume that the
function $t \mapsto \nabla_{x} ( - F_{t})$ is H\"{o}lder continuous.
To see why we will need such a condition when we turn to time-dependent gradient curves, let us look at the following examples.

\begin{defn}\label{de99}
Let $X$ be a \catzero\ space and $F: \real \times X \to \real$ be $\lambda$-convex. A locally Lipschitz curve $\sigma : [t_0,T] \to X$ is a \emph{time-dependent gradient curve of $F$ at $x_0$ and $t_0$} if $\sigma(t_0)=x_0$, there exists the right-side tangent vector $\sigma^\prime(t+)$ for all $t \in [t_0,T)$ and it is equal to the downward gradient vector $\nabla_{\sigma(t)} (- F_{ t })$ at $\sigma(t)$.
\end{defn}
We start with a time-independent example.

\begin{examp}\label{ex1111}
Let $X$ be the subset of Euclidean plane such that $x \geq 0$ and $y \geq 0$.
Let $F(x,y):= - \min \{ x,y \}$. Then the gradient vector $\nabla_{(x,y)} (- F)$ is $(1,0)$ if $x<y$, $(0,1)$ if $y<x$ or $(1/2,1/2)$ if $x=y$.
Because $X$ is a manifold with boundary, we can put the tangent bundle metric on the set of all tangent vectors at all points.
Here $(x, y) \mapsto \nabla_{(x,y)}(-F)$ has discontinuities at the points where $y = x$ because its length is $1/\sqrt{2}$ at those points and length $1$ everywhere else.

Thus for any initial point $(x_0,0)$ on $X$, we have a gradient curve $\gamma$ of the function $F$ given by
$$
\gamma(s) =  \left\{ \begin{array}{ll}
 (x_0,s) & \mbox{ if $0 \leq s \leq x_0$ },  \\
 ({1 \over 2}(x_0+s) ,{1 \over 2}(x_0+s)) & \mbox{ if $s \geq x_0$}.
 \end{array}
\right.
$$
\end{examp}

\begin{figure}[h]
\centering{\epsfig{file=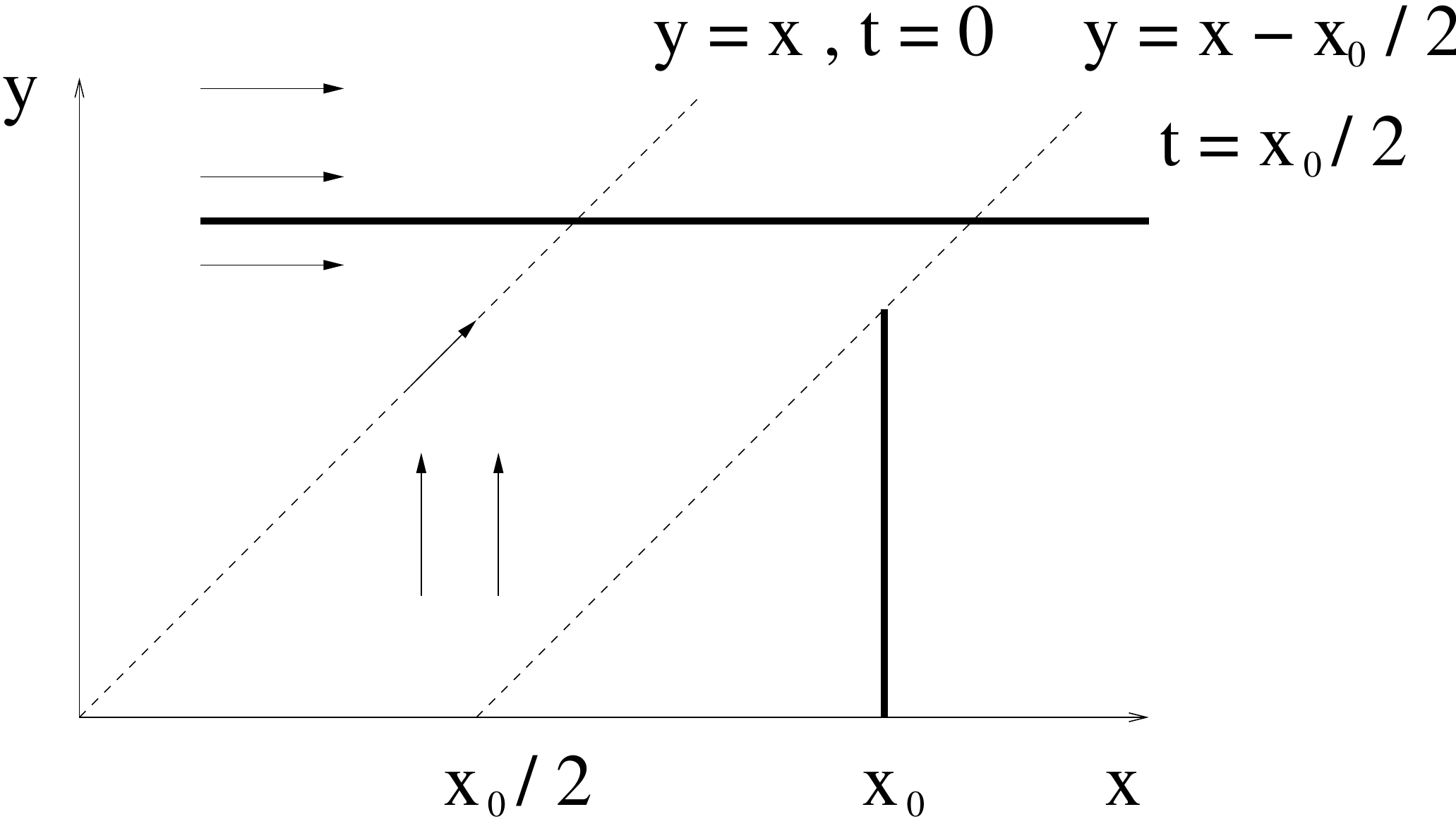  , height=4.8cm, width=9cm}}
\caption{Gradient vectors of $F_t$ at time $t=0$ and two time-dependent gradient curves}\label{forty}
\end{figure}

Now we give a time-dependent example of a convex function having no time-dependent gradient curves at some points.
In this example, the singular locus of Example \ref{ex1111} is translated $t$ units to the right for each $t \geq 0$.

\begin{examp}\label{ex111}
Let $X$ be the subset of Euclidean plane such that $x \geq 0$ and $y \geq 0$.
For $t \geq 0$, let $F(t,x,y):= - \min \{ x-t ,y \}$.
Then $(t, x, y) \mapsto \nabla_{(x,y)}(-F_t)$ has discontinuities at the points where $y = x-t$.
If a time-dependent gradient curve leads to one of these points, it terminates and cannot be continued as a gradient curve. A gradient curve starting above the diagonal $y=x$ never reaches one of these points and hence is defined for all $t$.
Those starting below the diagonal terminate in finite time when they get half way to the diagonal. No gradient curve can start at the diagonal. See Figure \ref{forty}.
\end{examp}

\begin{lemma}\label{l1}
Let $(X,d)$ be a \catk\ space and
$(T_x ,\rho)$ be the tangent cone of $X$ at $x \in X$.  Suppose $v_1, v_2, v_3 \in T_x$
and $\| v_3 \| = 1$. Then
$$|\langle v_3, v_1 \rangle - \langle v_3, v_2 \rangle | \le \rho(v_1, v_2).$$
\end{lemma}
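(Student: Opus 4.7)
The plan is to work directly in the tangent cone $T_x$, which is a Euclidean cone over $\Sigma_x$, and reduce the claim to elementary trigonometry combined with the triangle inequality in $\Sigma_x$. The degenerate case $v_1 = o_x$ or $v_2 = o_x$ can be disposed of first: if $v_1 = o_x$, then $\langle v_3, v_1\rangle = 0$ and $|\langle v_3, v_2\rangle| \le \|v_3\|\|v_2\| = \rho(v_1, v_2)$, directly from the definition of the inner product.

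For the main case where $v_1, v_2 \ne o_x$, set $r_i = \|v_i\|$ and let $\theta_i, \theta \in [0, \pi]$ be the angles at $o_x$ between $v_3, v_i$ and between $v_1, v_2$, respectively. By the definition of the inner product and the law of cosines in the Euclidean cone,
$$\langle v_3, v_i \rangle = r_i \cos \theta_i, \qquad \rho(v_1, v_2)^2 = r_1^2 + r_2^2 - 2 r_1 r_2 \cos \theta.$$
Squaring $\langle v_3, v_1\rangle - \langle v_3, v_2\rangle = r_1 \cos\theta_1 - r_2 \cos\theta_2$ and cancelling identical terms reduces the desired inequality to
$$r_1^2 \sin^2 \theta_1 + r_2^2 \sin^2 \theta_2 \;\geq\; 2 r_1 r_2 \bigl(\cos \theta - \cos \theta_1 \cos \theta_2\bigr).$$

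The heart of the argument is the triangle inequality applied to the directions $\xi_{v_1}, \xi_{v_2}, \xi_{v_3} \in \Sigma_x$, which gives $\theta \geq |\theta_1 - \theta_2|$ and hence $\cos \theta \leq \cos \theta_1 \cos \theta_2 + \sin \theta_1 \sin \theta_2$. Combined with AM--GM, which yields $r_1^2 \sin^2 \theta_1 + r_2^2 \sin^2 \theta_2 \geq 2 r_1 r_2 \sin \theta_1 \sin \theta_2$, this finishes the proof. I expect the only real subtlety to be the handling of the cap at $\pi$: the intrinsic metric on $\Sigma_x$ can exceed $\pi$ while the angles in $T_x$ are truncated to $[0,\pi]$, but since truncation at $\pi$ is $1$-Lipschitz the lower triangle inequality $\theta \geq |\theta_1 - \theta_2|$ survives in all cases.
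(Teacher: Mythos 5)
Your proof is correct and is essentially the paper's own argument in algebraic form: the paper develops $v_1,v_2,v_3$ into the Euclidean half-plane and invokes that projection to the $x$-axis is $1$-Lipschitz together with monotonicity of the Euclidean hinge, whereas you expand the same law-of-cosines quantities and reduce to $(r_1\sin\theta_1-r_2\sin\theta_2)^2\ge 0$ plus $\cos\theta\le\cos(\theta_1-\theta_2)$, which is exactly the same triangle-inequality-for-angles step. Your explicit handling of the truncation of the cone angle at $\pi$ corresponds to the paper's separate case $\theta_{12}=\pi$ and is done correctly.
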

\begin{proof}
Let $\theta_{ij} = \angle (v_i,v_j)$. Without loss of generality, assume
$\theta_{13} \le \theta_{23}$.

In the Euclidean upper half-plane, set
$$
w_3 = (1,0), \quad
w_i =  \|v_i\|(\cos \theta_{i3},\sin \theta_{i3}),\ i=1,2.
$$
Since projection to the $x$-axis does not increase distance,
$$|\langle v_3, v_1 \rangle - \langle v_3, v_2 \rangle|
= \bigl|  \|v_1\|\cos \theta_{13} - \|v_2\|\cos \theta_{23}\bigr|
\le \|w_1 - w_2\|.$$
By the triangle  inequality for angles,
$\theta_{23}-\theta_{13} \le \theta_{12}$. If $\theta_{12}<\pi$, then
$$ \|w_1 - w_2\|\le \rho(v_1, v_2),$$
since the righthand side may be obtained from the lefthand side by increasing the hinge angle
$\angle (w_1,w_2)$ from $\theta_{23}-\theta_{13}$ to $\theta_{12}$.   If $\theta_{12}=\pi$, then $\rho(v_1, v_2)=\|w_1\|+\|w_2\|$ and the inequality still holds.
\end{proof}

After we prove our main Theorem \ref{g7}, we will give several important examples in Section 6 that satisfy the assumption of H\"{o}lder continuity.

To finish this section, let us see how this H\"{o}lder continuity works on two fixed-time gradient curves issuing from the same point.

\begin{theorem}[Distance II]\label{g21}   
Let $(X,d)$ be a \catzero\ space and $(T_x , \rho )$ be the tangent cone of $X$ at $x \in X$.
Given $x_0 \in X$ and $t_1,t_2 \in \mathbb{R}$, suppose a function $F:\mathbb{R} \times X \to \mathbb{R}$ satisfies \\
1) $F$ is locally Lipschitz on $X$, \\
2) $F$ is $\lambda$-convex on $X$, \\
3) $\exists$ $B>0$ and $\alpha>0$ such that $\rho \bigl( \nabla_{x} (-F_{t_1})  , \nabla_{x} (-F_{t_2}) \bigr) \leq B | t_1 - t_2 |^\alpha$ for any $x \in X$.\\
Let $\mu_i:I_{A(t_i)} \to X$ be the fixed-time $t_i$ gradient curve of the function $x \mapsto F(t_i,x)$ where $\mu_i(0) =  x_0$ for $i=1,2$.  Then there is a positive constant $T$ such that $$d ( \mu_1(s), \mu_2(s) ) \leq 2 B s |t_1-t_2|^\alpha $$ for all $s$ in $\overline{I_{A(t_1)}} \cap \overline{I_{A(t_2)}} \cap [0,T]$.
\end{theorem}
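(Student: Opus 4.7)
The plan is to track the distance $D(s):=d(\mu_1(s),\mu_2(s))$ via a Gr\"{o}nwall-type estimate on its upper right Dini derivative, using Lemmas \ref{l222} and \ref{l1} to convert the temporal discrepancy of the two flow fields into a spatial discrepancy of the two curves.

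The first step is to establish a first-variation style inequality for the pair $(\mu_1,\mu_2)$. Each $\mu_i$ is locally Lipschitz with right tangent vector $\mu_i'(s+)=\nabla_{\mu_i(s)}(-F_{t_i})$; because $X$ is \catzero, squared distance is $2$-convex (Proposition \ref{g1}), so comparing small arcs of $\mu_1,\mu_2$ to their geodesic approximants out of $\mu_i(s)$ with velocity $\mu_i'(s+)$ gives
$$
\frac{d^+}{ds}D(s)\;\leq\;-\langle v_1,\nabla_{\mu_1(s)}(-F_{t_1})\rangle\;-\;\langle v_2,\nabla_{\mu_2(s)}(-F_{t_2})\rangle,
$$
where $v_i\in T_{\mu_i(s)}$ is the unit direction of the geodesic from $\mu_i(s)$ to $\mu_{3-i}(s)$. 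I would derive this by computing $\tfrac{d^+}{ds}\tfrac{1}{2}D^2(s)$ using the \catzero\ quadrilateral comparison and dividing through by $D(s)$ at points where $D(s)>0$.

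Next, add and subtract $\langle v_2,\nabla_{\mu_2(s)}(-F_{t_1})\rangle$ and split into two pieces. The piece involving only $F_{t_1}$ at the two points is controlled by Lemma \ref{l222}, which gives
$$
-\langle v_1,\nabla_{\mu_1(s)}(-F_{t_1})\rangle-\langle v_2,\nabla_{\mu_2(s)}(-F_{t_1})\rangle\;\leq\;-\lambda D(s).
$$
The remaining piece is the difference of inner products of a unit vector $v_2$ with two tangent vectors at the \emph{same} point $\mu_2(s)$, so Lemma \ref{l1} together with hypothesis~3) bounds it by
$$
\bigl|\langle v_2,\nabla_{\mu_2(s)}(-F_{t_1})\rangle-\langle v_2,\nabla_{\mu_2(s)}(-F_{t_2})\rangle\bigr|\;\leq\;\rho\bigl(\nabla_{\mu_2(s)}(-F_{t_1}),\,\nabla_{\mu_2(s)}(-F_{t_2})\bigr)\;\leq\;B|t_1-t_2|^{\alpha}.
$$
Combining yields the scalar Dini inequality
$$
\frac{d^+}{ds}D(s)\;\leq\;-\lambda\,D(s)\;+\;B|t_1-t_2|^{\alpha},\qquad D(0)=0.
$$

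Finally, the standard Gr\"{o}nwall comparison for upper Dini derivatives produces
$$
D(s)\;\leq\;B|t_1-t_2|^{\alpha}\cdot\frac{1-e^{-\lambda s}}{\lambda},
$$
interpreted as $B|t_1-t_2|^{\alpha}s$ if $\lambda=0$. For $\lambda\geq 0$ this is already $\leq B|t_1-t_2|^{\alpha}s$. For $\lambda<0$, the elementary estimate $e^{x}-1\leq 2x$ valid for $0\leq x\leq\ln 2$ shows that once $|\lambda|T\leq\ln 2$ (so we may take e.g.\ $T:=\ln 2/(1+|\lambda|)$) we have $D(s)\leq 2B\,s\,|t_1-t_2|^{\alpha}$ for all $s\in[0,T]$, which is the desired estimate on the intersection $\overline{I_{A(t_1)}}\cap\overline{I_{A(t_2)}}\cap[0,T]$. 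I expect the main technical obstacle to be the clean justification of the first-variation inequality in step one: the version quoted from \cite{bh} is stated for unit-speed geodesics, and extending it to these locally Lipschitz curves with only right tangent vectors requires a little care with the \catzero\ comparison for $d^{2}$; once that is done, Lemmas \ref{l222} and \ref{l1} and the Gr\"{o}nwall argument are essentially mechanical.
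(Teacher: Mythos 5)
Your proposal is correct and follows essentially the same route as the paper's proof: a first-variation identity for $f(s)=d(\mu_1(s),\mu_2(s))$, Lemma \ref{l1} applied to the three vectors $\xi_2(s)$, $\nabla_{\mu_2(s)}(-F_{t_1})$, $\nabla_{\mu_2(s)}(-F_{t_2})$ in $T_{\mu_2(s)}$ to trade the temporal discrepancy for the H\"{o}lder bound, Lemma \ref{l222} to produce the $-\lambda f(s)$ term, and then the Gr\"{o}nwall comparison with the elementary estimate $e^{\lambda_1 s}-1\leq 2\lambda_1 s$ on a short interval for $\lambda<0$. The only differences are cosmetic: you state the first-variation step as an inequality justified via the $2$-convexity of $d^2$ (the paper simply invokes the first variation formula as an equality) and you exhibit an explicit choice of $T$.
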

Note $I_{A(t_i)}$ is an interval dependent on $F_{t_i}$, given by Mayer (see Theorem \ref{g777}). $\overline{I_A}$ is a closure of $I_A$.
\begin{proof}
Let $f(s)$ be a distance $d( \mu_1(s), \mu_2(s) )$ and let $\xi_1(s)$ be the unit vector of $[\mu_1(s) \mu_2(s)]$ at $\mu_1(s)$  and $\xi_2(s)$ be the unit vector of $[\mu_2(s) \mu_1(s)]$ at $\mu_2(s)$.

By first variation formula, we have
$$ f^+(s) = - \langle \xi_1(s)  ,  \nabla_{ \mu_1(s) } (- F_{ t_1 }) \rangle - \langle  \xi_2(s) , \nabla_{ \mu_2(s) } (- F_{ t_2 }) \rangle.  $$
We have three vectors $\nabla_{ \mu_2(s) } (- F_{ t_1 })$, $\nabla_{ \mu_2(s) } (- F_{ t_2 })$ and $\xi_2(s)$ in a tangent cone $T_{ \mu_2(s) }$.
By Lemma \ref{l1} with three vectors of $T_{ \mu_2(s) }$, we get
$$ f^+(s) \leq - \langle \xi_1(s)  ,  \nabla_{ \mu_1(s) } (- F_{ t_1 }) \rangle - \langle  \xi_2(s) , \nabla_{ \mu_2(s) } (- F_{ t_1 }) \rangle +  \rho ( \nabla_{ \mu_2(s) } (- F_{ t_1 }), \nabla_{ \mu_2(s) } (- F_{ t_2 }) ).  $$
By Lemma \ref{l222} with $x=\mu_1(s)$ and $y=\mu_2(s)$, we have
$$ \langle \xi_1(s)  ,  \nabla_{ \mu_1(s) } (- F_{ t_1 }) \rangle + \langle  \xi_2(s) , \nabla_{ \mu_2(s) } (- F_{ t_1 }) \rangle   \geq  \lambda f(s).$$
Finally, we get
\begin{equation}
\begin{split}
f^+(s) & \leq - \lambda f(s) + \rho ( \nabla_{ \mu_2(s) } (- F_{ t_1 }), \nabla_{ \mu_2(s) } (- F_{ t_2 }) ) \\
& \leq - \lambda f(s) + B | t_1 - t_2 |^\alpha.
\end{split}
\end{equation}
Since $f(s)$ is locally Lipschitz, $f^+(s)$ exists for almost each $s$. Thus we need to solve this differential equation.
If $\lambda = 0$, it is proved trivially.
So we only need to consider the case $\lambda \neq 0$.
Solving this,
$$ d( \mu_1(s), \mu_2(s) ) = f(s) \leq B | t_1 - t_2 |^\alpha   ( 1  - e^{  - \lambda s }  )  /  \lambda $$ for all $s \geq 0$.
Note that $y = {n \over -m} + C e^{mx}$ is the solution of $y' = m y + n$.

If $\lambda \geq 0$, it is proved since $1 - e^{- \lambda t} \leq \lambda t$.

If $\lambda < 0$, let $\lambda_1 = - \lambda $. Then
\begin{equation*}
\begin{split}
(1 -  e^{ - \lambda s })  /  \lambda  &=  (1 - e^{  \lambda_1 s } ) / (- \lambda_1)  \\
&= ( e^{ \lambda_1 s } - 1 ) / \lambda_1   \\
\end{split}
\end{equation*}
for all $s \geq 0$. Then it is proved because there is an positive constant $T$ such that $e^{\lambda_1 s } - 1 \leq 2 \lambda_1 s$ for all $s$ in $[0,T]$.
\end{proof}

\section{Time-dependent gradient curves}
Now we will show the existence and uniqueness of time-dependent gradient curves (see Definition \ref{de99}).
For this, we need the definition of the flow map of the function $F_t$ when $F$ is Lipschitz in $t$.
\begin{defn}\label{de9999}
 Let $X$ be a \catzero\ space and $F$ be $\lambda$-convex and locally Lipschitz on $X$. Suppose that $F$ is $L$-Lipschitz in $t$. Fix $t$ and let $\mu_{x,t} : I_A \to X$ be the fixed-time $t$ gradient curve of the function $F_{t}$ with $\mu_{x,t}(0)=x$.
Since $F$ is $L$-Lipschitz in $t$, $I_A$ (defined in \eqref{e222}) is independent of $t$.
Then from Theorem \ref{g777}, the flow map $\Phi$ can be defined by
\begin{equation}
\begin{split}
\Phi : \mathbb{R} \times \overline{I_A} \times X     & \to X   \\
       (t \; ,\ell \; ,x \; )  & \mapsto \mu_{x,t}( \ell ) \\
\end{split}
\end{equation}
\end{defn}
For fixed $t$, Mayer showed the semigroup property of this flow map $\Phi$, which will be used in the proof of Theorem \ref{g7}.
\begin{theorem}\cite[Th. 2.5]{m}\label{g9999}
 For $\ell_1$, $\ell_2 \geq 0$ such that $\ell_1+\ell_2 \in I_A$, $$\Phi( t, \ell_1 + \ell_2 , x) = \Phi ( t, \ell_2 , \Phi (t, \ell_1 ,x)).$$
\end{theorem}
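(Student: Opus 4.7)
The plan is to deduce the semigroup identity directly from the uniqueness clause in Theorem \ref{g777}. Both sides of the claimed equality, viewed as curves in $X$ parameterized by $s \in [0,\ell_2]$, will be shown to be gradient curves of the fixed-time function $F_t$ issuing from the common point $\Phi(t,\ell_1,x)$, so uniqueness forces them to agree.

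Concretely, I would introduce the shifted curve $\nu(s) := \mu_{x,t}(\ell_1 + s)$ for $s \in [0,\ell_2]$. Clearly $\nu(0) = \mu_{x,t}(\ell_1) = \Phi(t,\ell_1,x)$, and local Lipschitz continuity of $\nu$ is inherited from $\mu_{x,t}$. Because right-differentiating in the shifted parameter at $s_0$ is the same as right-differentiating in the original parameter at $\ell_1 + s_0$, one has
$$\nu'(s_0+) \,=\, \mu_{x,t}'\bigl((\ell_1+s_0)+\bigr) \,=\, \nabla_{\mu_{x,t}(\ell_1+s_0)}(-F_t) \,=\, \nabla_{\nu(s_0)}(-F_t),$$
so $\nu$ satisfies Definition \ref{d11111} and is a gradient curve of $F_t$ starting at $\Phi(t,\ell_1,x)$. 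On the other hand, the curve $\tilde\nu(s) := \Phi(t,s,\Phi(t,\ell_1,x)) = \mu_{\Phi(t,\ell_1,x),t}(s)$ is by construction the fixed-time gradient curve of $F_t$ starting at the same point. Uniqueness in Theorem \ref{g777} then yields $\nu \equiv \tilde\nu$, and evaluating at $s = \ell_2$ gives $\mu_{x,t}(\ell_1+\ell_2) = \Phi(t,\ell_2,\Phi(t,\ell_1,x))$, which is precisely the claim.

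The only subtle point is confirming that both candidate curves are defined on the full interval $[0,\ell_2]$ so that Theorem \ref{g777} applies. The curve $\nu$ is defined there because $\ell_1 + \ell_2 \in I_{A(t)}$ by hypothesis and $s \mapsto \ell_1 + s$ maps $[0,\ell_2]$ into $\overline{I_{A(t)}}$; the curve $\tilde\nu$ is defined on $[0,\ell_2] \subset \overline{I_{A(t)}}$ because the constant $A(t)$ in \eqref{e222} depends only on the asymptotic behavior of $F_t$ and not on the initial point. Once this domain bookkeeping is in place, the whole argument reduces to matching initial conditions and differential equations and invoking uniqueness; no additional analytic input is needed.
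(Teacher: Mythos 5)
Your argument is correct. Note first that the paper does not actually prove this statement: Theorem \ref{g9999} is quoted from Mayer \cite[Th.\ 2.5]{m} and used as a black box, so there is no in-paper proof to match against. Your derivation---shift the parameter of $\mu_{x,t}$ by $\ell_1$, observe that the shifted curve is again a locally Lipschitz curve whose right-side tangent vector at each parameter value equals the downward gradient of $F_t$ at the current point (so it is a gradient curve in the sense of Definition \ref{d11111} issuing from $\Phi(t,\ell_1,x)$), and then invoke uniqueness---is the standard way to get the semigroup law once existence and uniqueness are in hand, and your domain bookkeeping ($\ell_1+s\le\ell_1+\ell_2\in I_{A(t)}$ and $A(t)$ independent of the initial point) is right since $I_{A(t)}$ is an interval of the form $(0,c]$ or $(0,\infty)$. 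The one point you lean on implicitly is that the uniqueness clause of Theorem \ref{g777} applies to an arbitrary gradient curve defined only on the subinterval $[0,\ell_2]$ with the prescribed initial value, not merely to the specific limit of the discrete scheme; this is the intended reading, and if one wants it spelled out it follows from the contraction estimate (Corollary \ref{g77777}, or the first-variation-plus-Lemma \ref{l222} computation used in Claim 7 of Theorem \ref{g7}), which forces any two gradient curves with the same initial point to coincide on their common domain. With that remark your proof is complete and, if anything, more self-contained than the paper's treatment, which defers entirely to Mayer.
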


Next, we construct a set of piecewise fixed-time gradient curves beginning at $x_0$, and show that they converge a continuous curve.
Let $t^n_i(s)= t_0 + i s /2^n$ for fixed $s$ and an integer $i \geq 0$. Let $p^n_0(s) = x_0$ and
let the vertices of the piecewise fixed-time gradient curves be
$$p^n_i(s) = \Phi( t^n_{i-1}(s), s/2^n , p^n_{i-1}(s) ).$$ See Figure \ref{five}.
Note that $p^n_i(s)$ is dependent on $x_0$ and $t_0$. But we use $p^n_i(s)$ to avoid complicated notation.

We define \emph{piecewise fixed-time gradient curves} $\gamma^n_s : [t_0, \infty) \to X $ \emph{with step size $s/2^n$} such that $\gamma^n_s(t_0)= x_0$ given for $t^n_{i-1}(s) \leq t \leq t^n_i(s)$
by $$\gamma^n_s(t) := \Phi ( t^n_{i-1}(s), t - t^n_{i-1}(s) , p^n_{i-1}(s) ).$$

Two points $p^n_{i-1}(s)$ and $p^n_i(s)$ are connected by the fixed-time $t^n_{i-1}(s)$ gradient curve of $F_{t_{i-1}(s)}$. Then $p^n_i(s)$ is connected to $p^n_{i+1} (s)$ by the fixed-time $t^n_i(s)$ gradient curve of $F_{t_i(s)}$.
The fixed-time $t^n_{i}(s)$ gradient curve flows for time $s/2^n$ from $p^n_{i}(s)$ for each $i \geq 0$.
In Figure \ref{five}, there are two piecewise fixed-time gradient curves of $\gamma^n_s$ and $\gamma^{n+1}_s$. Note $t^n_{i}(s)=t^{n+1}_{2i}(s)$.

\begin{theorem}[Existence and Uniqueness of time-dependent gradient curves]\label{g7}
Let $(X,d)$ be a \catzero\ space and $(T_x ,\rho)$ be the tangent cone of $X$ at $x \in X$. Given $x_0 \in X$ and $t_0 \in \mathbb{R}$, suppose a function $F:\mathbb{R} \times X \to \mathbb{R}$ satisfies \\
1) $F$ is locally Lipschitz on $X$, \\
2) $F$ is $\lambda$-convex on $X$, \\
3) $F$ is $L$-Lipschitz in $t$, \\
4) $\exists$ $ B >0$, $B_0 > 0$ and $\alpha>0$ such that $\rho \bigl( \nabla_{x} (-F_{t})  , \nabla_{x} (-F_{t'}) \bigr) \leq B | t - t' |^\alpha$ for any $x \in X$, any $t, t' \in \mathbb{R}$ such that $|t-t'| \leq B_0$.

Then there is a time-dependent gradient curve $\sigma_{x_0,t_0}$ of the function $F$ at $x_0$ and $t_0$ defined by $\sigma_{x_0,t_0}(t_0+s) := \lim_{ n \to \infty} \gamma^n_s (t_0+s)$
such that $\sigma_{x_0,t_0}(t_0 ) = x_0$. Moreover, any time-dependent gradient curve of $F$ at $x_0$ and $t_0$ coincides with $\sigma_{x_0,t_0}$. 
\end{theorem}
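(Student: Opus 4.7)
The plan is to show that for each fixed $s>0$ the sequence $\{\gamma^n_s(t_0+s)\}_{n\in\mathbb{N}}$ is Cauchy in $X$, to define $\sigma_{x_0,t_0}(t_0+s)$ as its limit, and then verify that the resulting curve is the unique time-dependent gradient curve at $(t_0,x_0)$. The discrete flow $\Phi$ and its semigroup property (Theorem \ref{g9999}) will play the role that Mayer's discrete flow plays in the time-independent case, but here the base time of $\Phi$ shifts between successive sub-steps, and controlling that shift is exactly what hypothesis $(4)$ provides.

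To establish the Cauchy property I compare $\gamma^n_s$ with $\gamma^{n+1}_s$ at the coarse grid times $t^n_i(s)=t_0+is/2^n$ and set $d_i:=d(p^n_i(s),p^{n+1}_{2i}(s))$. On $[t^n_i(s),t^n_{i+1}(s)]$ the curve $\gamma^n_s$ flows under the single function $F_{t^n_i(s)}$ for time $s/2^n$, which by Theorem \ref{g9999} I split into two half-steps under the same function; the curve $\gamma^{n+1}_s$ instead flows first under $F_{t^n_i(s)}$ and then under $F_{t^n_i(s)+s/2^{n+1}}$, each for time $s/2^{n+1}$. Applying Distance I (Corollary \ref{g77777}) to the matched half-steps and Distance II (Theorem \ref{g21}) to the time-shifted half-step (two fixed-time gradient curves issuing from the same point $p^{n+1}_{2i+1}(s)$ under $F_{t^n_i(s)}$ and $F_{t^n_i(s)+s/2^{n+1}}$) yields the recursion
\begin{equation*}
d_{i+1} \;\leq\; e^{-\lambda s/2^n}\, d_i \;+\; 2B\bigl(s/2^{n+1}\bigr)^{1+\alpha},
\end{equation*}
valid once $n$ is large enough that $s/2^{n+1}\leq B_0$ and $s/2^n$ lies in the interval $[0,T]$ of Theorem \ref{g21}. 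Iterating with $d_0=0$, the resulting geometric sum gives $d_{2^n}\leq C(s,\lambda)\,2^{-n\alpha}$, which is summable in $n$, so $\sigma_{x_0,t_0}(t_0+s):=\lim_n\gamma^n_s(t_0+s)$ is well-defined. The same estimate applied on subintervals yields equicontinuity in $s$, so $\sigma_{x_0,t_0}$ is locally Lipschitz on $[t_0,t_0+T]$ for a positive $T$ determined by $B_0$ and the intervals $I_{A(t)}$.

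Next I verify that $\sigma_{x_0,t_0}$ satisfies the gradient condition. For the right tangent vector at $t_0$, I compare $\sigma_{x_0,t_0}$ on $[t_0,t_0+r]$ with the fixed-time-$t_0$ gradient curve $\mu$ of $F_{t_0}$ starting at $x_0$ via the same recursion (measuring $\gamma^n_r$ against $\mu$), obtaining $d(\sigma_{x_0,t_0}(t_0+r),\mu(r))=O(r^{1+\alpha})$. Combined with the \catzero\ hinge inequality this forces the Alexandrov angle at $x_0$ between $[x_0\,\sigma_{x_0,t_0}(t_0+r)]$ and $[x_0\,\mu(r)]$ to tend to zero as $r\to 0^{+}$, so $\sigma_{x_0,t_0}'(t_0+)$ exists and equals $\mu'(0+) = \nabla_{x_0}(-F_{t_0})$ by Theorem \ref{g777}. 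For a general base time $t'\in(t_0,t_0+T)$, restart the construction from $(t',\sigma_{x_0,t_0}(t'))$; a Cauchy comparison of the shifted approximations with $\gamma^n_{s}$ restricted to $[t',t_0+s]$ shows the two limits agree, and then the argument just given at $t'$ identifies the right tangent vector with $\nabla_{\sigma_{x_0,t_0}(t')}(-F_{t'})$.

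For uniqueness, let $\tilde\sigma$ be any time-dependent gradient curve of $F$ at $(t_0,x_0)$ and set $g(r):=d(\tilde\sigma(t_0+r),\sigma_{x_0,t_0}(t_0+r))$. The first-variation computation of Theorem \ref{g21}, applied now with both tangent vectors being genuine gradients of $F_{t_0+r}$ at the two endpoints, together with Lemma \ref{l222}, gives $g^{+}(r)\leq -\lambda\,g(r)$ wherever it is defined; combined with $g(0)=0$ a Gr\"onwall argument forces $g\equiv 0$ on the common interval of definition. The main obstacle is Step 3, the identification of the right tangent vector of the limit curve: the approximations $\gamma^n_s$ give pointwise convergence of positions but not automatically of infinitesimal directions, and extracting the correct direction in the limit requires the H\"older bound $(4)$ to control how much the gradient vector field can drift over intervals of length $s/2^n$, thereby ensuring that the piecewise time-frozen surrogates faithfully encode the true time-varying flow to leading order in $r$.
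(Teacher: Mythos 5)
Your proposal follows essentially the same route as the paper: the same recursion $d_{i+1}\leq e^{-\lambda s/2^n}d_i+2B(s/2^{n+1})^{1+\alpha}$ obtained by combining Distance I and Distance II via the semigroup splitting (the paper's Claim 1), the same $O(s^{1+\alpha})$ comparison with the fixed-time curve $\mu$ followed by a hinge-angle argument to identify the right tangent vector (the paper's Claims 5 and 6), and the same first-variation plus Lemma \ref{l222} Gr\"onwall estimate for uniqueness (Claim 7). The one place you are substantially lighter than the paper is the restart at a general base time $t'$: since $t'-t_0$ need not be a dyadic multiple of the original step, the paper must first compare piecewise curves built with two unrelated step sizes $s/2^n$ and $s_1/2^n$ (its Claim 3, the most technical part of the proof) before the semigroup property, and hence the tangent identification at $t'$, goes through, whereas you compress this comparison into a single clause.
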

Note that $\gamma^n_s(t_0+s) = p^n_{2^n}(s)$ since $p^n_{2^n}(s)$ is $(2^n+1)$-th point of $\{ p^n_{i}(s) \ | i \geq 0 \}$.

\begin{proof}
Since $F$ is $L$-Lipschitz in $t$, $I_A$ (defined in \eqref{e222}) is independent of $t$. For a large integer $n$, assume that $h = s / 2^n \in \overline{I_A} \cap [0,T] \cap [0,B_0] $ where $T$ is from Theorem \ref{g21}.



\begin{figure}[h]
\centering{\epsfig{file=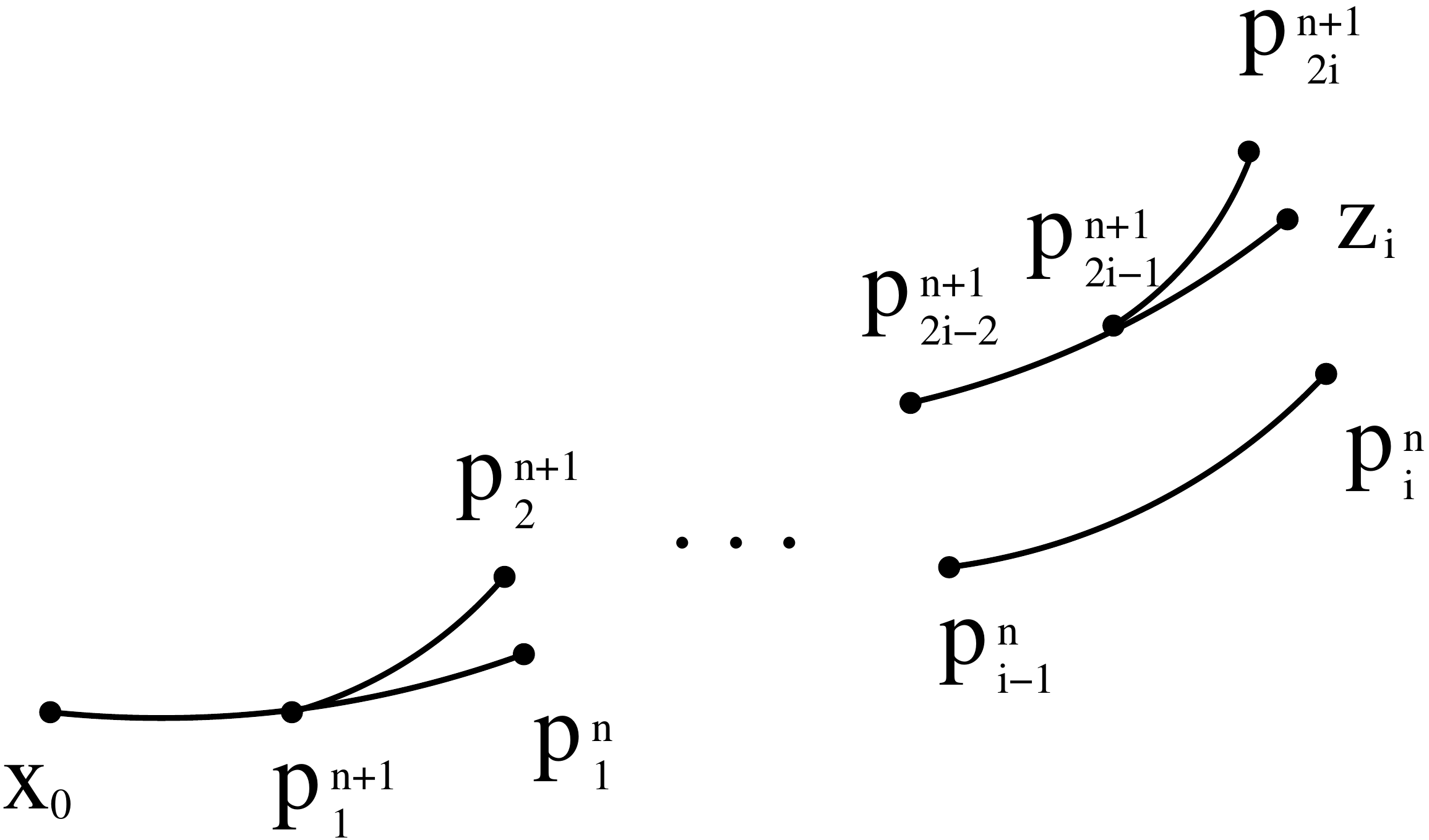  , height=5.3cm, width=9.6cm}}
\caption{ Two piecewise fixed-time gradient curves of $p^n$ and $p^{n+1}$ and the point $z_i$ }\label{five}
\end{figure}

\textbf{Claim 1.} \textit{The sequence} $\{p^n_{2^n}(s)   \}$ \textit{is Cauchy. We define the limit curve} $\sigma_{x_0,t_0}$ \textit{by} $$\sigma_{x_0,t_0}(t_0+s) := \lim_{n \to \infty} p^n_{2^n}(s).$$\\
By induction on $i$, we will show that $$d(p^n_i(s),p^{n+1}_{2i}(s) ) \leq Bi e^{ -\lambda_0 (i-1)  h } (h/2)^{1+\alpha}  $$ where $\lambda_0 = \min \{ 0,\lambda  \}$.

The start of the induction is trivial since $x_0 = p_0^n(s) = p_{2\cdot 0}^{n+1}(s)$.
First, assume that
\begin{equation}\label{ee1}
d(p^n_{i-1}(s),p^{n+1}_{2(i-1)}(s) ) \leq B(i-1) e^{ -\lambda_0 (i-2) h } (h/2)^{1+\alpha}.
\end{equation}

Let $z_{i}$ be the point on the fixed-time
$t_0+{(i-1)}h$ gradient curve flowing for time $h=s/2^n$ from $p^{n+1}_{2i-2}(s) $, i.e $$z_{i} := \Phi(  t^n_{i-1}(s),  s/2^n,  p^{n+1}_{2i-2}(s) )$$ (see Figure \ref{five}). Note also that $p_1^n(s) = z_1$.

Since  $z_{i} = \Phi(  t^n_{i-1}(s),  s/2^n,  p^{n+1}_{2i-2}(s) )$ and $p^n_i(s) = \Phi( t^n_{i-1}(s), s/2^n , p^n_{i-1}(s) )$, by Corollary \ref{g77777},
we get
\begin{equation}\label{ee2}
\begin{split}
d(p^n_{i}(s), p^{n+1}_{2i}(s) ) &\leq d(p^n_{i}(s), z_{i})+d(z_{i},p^{n+1}_{2i}(s) ) \\
& \leq e^{ -\lambda_0  h } d( p^n_{i-1}(s), p^{n+1}_{2(i-1)}(s) ) + d(z_{i},p^{n+1}_{2i}(s) ). \\
\end{split}
\end{equation}
By Theorem \ref{g9999}, the semigroup property of the fixed-time gradient
flows gives that $z_i$ is equal to the point flowing for time $h/2$ from
$p^{n+1}_{2i-1}(s)$ on the same fixed-time
$t_0+{(i-1)}h$ gradient curve, that is,
$$z_i = \Phi( t^n_{i-1}(s),  h/2 , p^{n+1}_{2i-1}(s) ).$$

Since $z_i = \Phi( t^n_{i-1}(s),  h/2 , p^{n+1}_{2i-1}(s) )$ and $p^{n+1}_{2i}(s) = \Phi( t^{n+1}_{2i-1}(s), h/2 , p^{n+1}_{2i-1}(s) )$, we can apply Theorem \ref{g21} at $p^{n+1}_{2i-1}(s)$ with $h/2 \leq T$.
Thus we have
\begin{equation}\label{ee3}
d(z_{i},p^{n+1}_{2i}(s) ) \leq 2 B (h/2) | t^{n+1}_{2i-1} - t^n_{i-1} |^{\alpha} = 2 B (h/2)^{1+\alpha}.
\end{equation}
By Equations \eqref{ee1}, \eqref{ee2} and \eqref{ee3}, we obtain
\begin{equation*}
\begin{split}
d(p^n_{i}(s) ,p^{n+1}_{2i}(s) )
& \leq e^{ -\lambda_0  h } 2 B(i-1) e^{ -\lambda_0 (i-2)  h } (h/2)^{1+\alpha}   + 2 B (h/2)^{1+\alpha}   \\
& = 2 B [ (i-1) e^{ -\lambda_0 (i-1)  h } + 1] (h/2)^{1+\alpha}  \\
& \leq 2 B i e^{ -\lambda_0 (i-1)  h } (h/2)^{1+\alpha}. \\
\end{split}
\end{equation*}
Letting $i$ be ${2^n}$, we have
\begin{equation}\label{g99}
d(p^n_{2^n}(s) , p^{n+1}_{2^{n+1}}(s) ) \leq  B s e^{ -\lambda_0 s} ( {\tfrac{s}{2^{n+1}} } )^\alpha.
\end{equation}
For $m$ and $n$ such that $m > n$, from Equation \eqref{g99},
\begin{equation}\label{ggg9ggg9}
\begin{split}
d(p^{n}_{2^{n}}(s) , p^m_{2^m}(s) ) &\leq d(p^{n}_{2^{n}}(s) , p^{n+1}_{2^{n+1}}(s) ) + \cdots + d(p^{m-1}_{2^{m-1}}(s) , p^m_{2^m}(s) )\\
&\leq B s^{1+\alpha}  e^{ -\lambda_0 s}  [    ( {\tfrac{1}{2^{n+1}} } )^\alpha      + \cdots +       ( {\tfrac{1}{2^{m}} } )^\alpha         ]  \\
&\leq B s^{1+\alpha}  e^{ -\lambda_0 s}  (\tfrac{1}{2^{ n }})^\alpha \tfrac{1}{1- 1/ 2^\alpha}. \\
\end{split}
\end{equation}
This means that we have the Cauchy sequence $\{p^n_{2^n}(s)  \}$.

\textbf{Claim 2.}
$$d( \sigma_{x_0,t_0}( t_0+s ) , p^n_{2^n}(s) ) \leq B' s^{1+\alpha} \tfrac{1}{2^{\alpha n}} e^{ -\lambda_0 s}$$ \textit{where} $\lambda_0 = \min \{ 0,\lambda  \}$
and $B'=  \tfrac{B}{1- 1/ 2^\alpha}$.

As $m \to \infty$ in Equation \eqref{ggg9ggg9}, we have
$$d(\sigma_{x_0,t_0}( t_0+ s) , p^n_{2^n}(s)  ) \leq B' s^{1+\alpha} \tfrac{1}{2^{\alpha n}} e^{ -\lambda_0 s}.$$
$$ $$
In next claim, for each fixed integer $n$ and any number $s_1$ smaller than $s$,
 we will deal with the piecewise fixed-time gradient curves $\gamma^n_{s_1}$ with step size $s_1/2^n$, which is less than step size $s/2^n$ of $\gamma^n_s$. Note that $\gamma^n_s$ is defined in the beginning of this section.

\textbf{Claim 3.}
\textit{For any fixed values} $s$, $s_1$ \textit{and} $t_0$, \textit{the two sequences of
points} $ \{ \gamma^n_s (t_0+s) | n \geq 0      \}$ \textit{and} $ \{  \gamma^n_{s_1} (t_0+s) | n \geq 0 \} $ \textit{get close in the sense that}
  $$\lim_{n \to \infty} d( \gamma^n_s(t_0+ s) , \gamma^n_{s_1}(t_0+ s) ) =0.$$

$$ $$
Let $h_1:=s_1/2^n$ and $h:=s/2^n$.
Let $s$ and $s_1$ be fixed and let $n$ be fixed until we take $n \to \infty$.
When $s_1=s/2^m$, it is proved by Claim 2.

 Let $x_{\ell}(s_1) = \gamma^n_{s_1} ( t_0 + {\ell} h_1 )$ for an integer $\ell \geq 0$. So $x_\ell(s_1)$ is the vertex of the piecewise fixed-time gradient curves $\gamma^n_{s_1}$ and
$$x_\ell(s_1) = \Phi( t_0 + (\ell-1) h_1,  h_1 , x_{\ell-1}(s_1) ).$$

Note that $t^n_i(s) = t_0 + i  s / 2^n $ for $i = 1, \cdots, 2^n.$
Since $s_1 < s$, there is a constant $k=k(i)$ such that $$t_0 + k h_1 \leq t^n_{i-1}(s) < t_0 + ( k + 1 )h_1.$$
(see Figure \ref{six}).

Note that if $s_1=s/2^{n+1}$, then Figure \ref{six} becomes Figure \ref{five}.
We want to get an upper bound of the distance between two points $\gamma^n_s(t^n_i(s))$ and $\gamma^n_{s_1}(t^n_i(s)) $. Here $\gamma^n_s(t^n_i(s))$ is the vertex of $\gamma^n_s$. But $\gamma^n_{s_1}(t^n_i(s))$ is not necessarily the vertex of $\gamma^n_{s_1}$ and there are several vertices x's between $\gamma^n_{s_1}(t^n_i(s) )$ and $\gamma^n_{s_1} (t^n_{i-1}(s) )$.
Specifically, by induction on $i$, we will show that
\begin{equation}\label{eeeee}
d(\gamma^n_s (t^n_{i}(s) ), \gamma^n_{s_1} (t^n_{i}(s) ) ) \leq 2 B( [s/s_1] +1) i s /2^n  e^{ -\lambda_0 i s /2^n }  ( s_1/ 2^n)^{\alpha} .
\end{equation}


\begin{figure}[h]
\centering{\epsfig{file=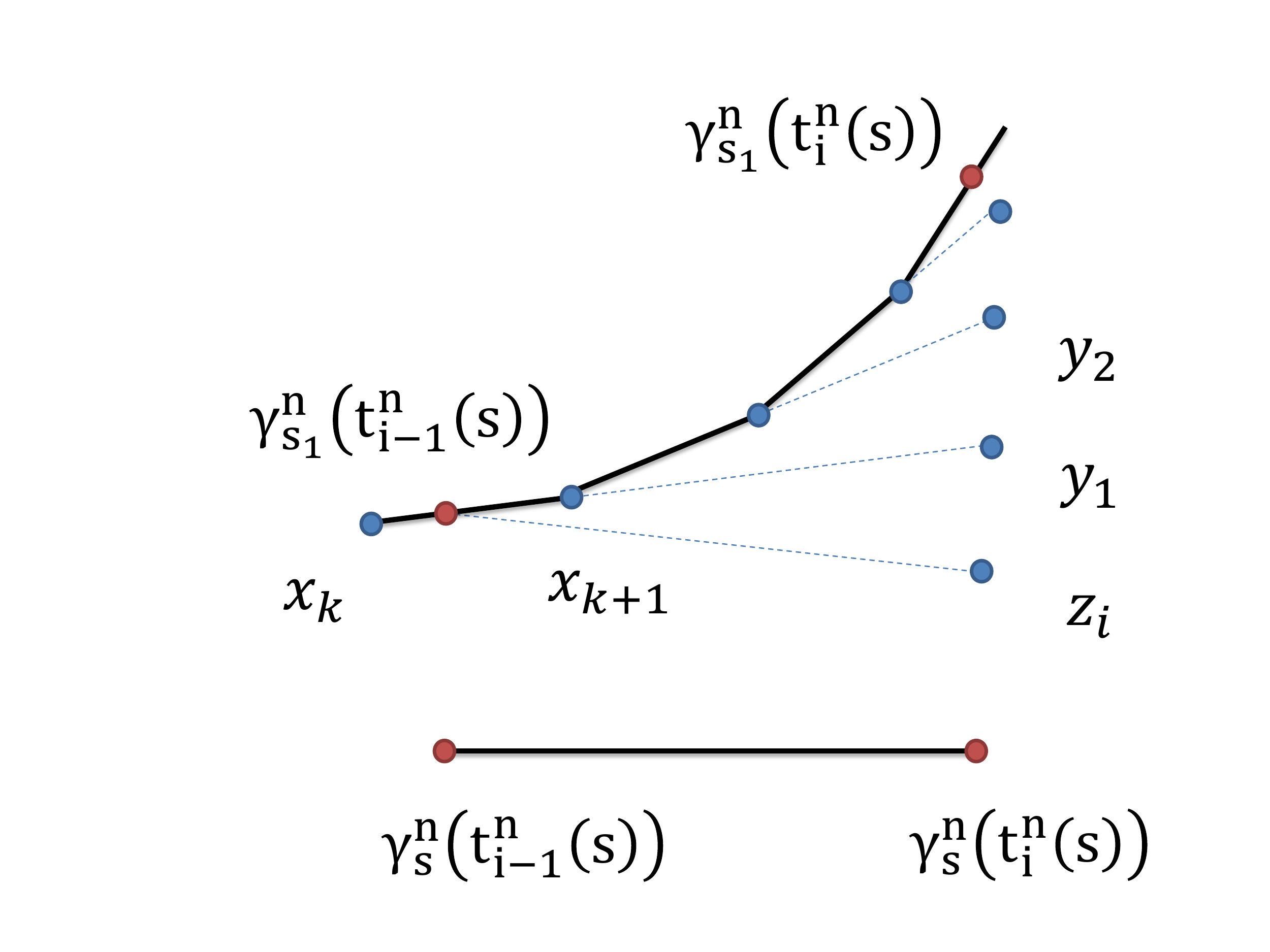  , height=7cm, width=9.7cm}}
\caption{ $\gamma^n_s$, $\gamma^n_{s_1}$, $y_j$ and $z_i$ }\label{six}
\end{figure}

In order to get the upper bound of the distance $d( \gamma^n_s (t^n_i(s) ) , \gamma^n_{s_1} (t^n_i(s) ) )$,
we will consider two distances $d( \gamma^n_s (t^n_i(s) ) , z_i)$ and $d( \gamma^n_{s_1} (t^n_i(s) ) , z_i)$ where $z_i$ will be defined below (see Figure \ref{six}).

\textbf{Step 1.} For $d( \gamma^n_s( t^n_i(s) ) , z_i)$, first look at the point $\gamma^n_{s_1}( t^n_{i-1}(s) )$. Let $t' = t_0 + k h_1$.
Then the point $\gamma^n_{s_1}( t^n_{i-1}(s) )$ is on the fixed-time $t'$ gradient segment from $\gamma^n_{s_1}( t' )$ to $\gamma^n_{s_1}( t' + h_1  ) = \Phi( t'  , h_1 , \gamma^n_{s_1}( t' ) )$.

Since the gradient segment from $\gamma^n_s(t^n_{i-1}(s) )$ to $\gamma^n_s(t^n_i(s) ) = \Phi( t^n_{i-1}(s) , s/2^n , \gamma^n_s(t^n_{i-1}(s) ) )$ has the fixed-time $t^n_{i-1}(s)$,
in order to use Corollary \ref{g77777}, we need a fixed-time $t^n_{i-1}(s)$ gradient curve from $\gamma^n_{s_1}(t^n_{i-1}(s))$. So let $z_{i}$ be the point on this fixed-time
$t^n_{i-1}(s)$ gradient curve flowing for time $s/2^n$ from $\gamma^n_{s_1}(t^n_{i-1}(s)  )$, i.e $$z_{i} := \Phi( t^n_{i-1}(s) , s/2^n , \gamma^n_{s_1}(t^n_{i-1}(s)    )).$$
Since $\gamma^n_s(t^n_i(s) ) = \Phi( t^n_{i-1}(s) , s/2^n , \gamma^n_s(t^n_{i-1}(s)   ))$ and $z_{i} = \Phi( t^n_{i-1}(s) , s/2^n , \gamma^n_{s_1}(t^n_{i-1}(s)   ))$, by Corollary \ref{g77777},
\begin{equation}\label{e1e2}
\begin{split}
d( z_i , \gamma^n_s(t^n_i(s)  ) )
&\leq e^{ -\lambda_0  s /2^n } d( \gamma^n_{s_1}(t^n_{i-1}(s) ), \gamma^n_s(t^n_{i-1}(s) ) ).
\end{split}
\end{equation}

\textbf{Step 2.} For $d( \gamma^n_{s_1} ( t^n_i(s) ) , z_i)$, let $$y_{j+1} := \Phi \bigl( t_0 + (k +j ) h_1 , ih - (k +j ) h_1 , x_{j + k }(s_1) \bigr) $$ (see Figure \ref{six}). For every $j$, the point $y_{j+1}$ is in time $t_i = t_0 + i h $ since its initial point $x_{j + k}(s_1) =\gamma_1(t_0 + (k +j ) h_1)$ is in time $t_0 + (k +j ) h_1$ and it flows for time $i h  - (k +j ) h_1$.
Note that $y_{ [is/s_1] - k  + 1 } = \gamma^n_{s_1} (t^n_i(s) )$. Also note that if $t_0+ k  h_1 = t_{i-1}$, then $$y_1 = z_i.$$
By Theorem \ref{g9999}, the semigroup property of the fixed-time gradient flows gives that $$y_j =  \Phi \bigl( t_0 + (k + j-1 ) h_1 , ih - (k +j ) h_1 , x_{j + k }(s_1) \bigr).$$
For three points $x_{j+k}(s_1) $, $y_{j}$ and $y_{j+1}$ when $j \geq 1$, from Theorem \ref{g21} at $x_{j+k}(s_1) $ with $T = i h  - (k + j ) h_1 \leq h = s/2^n$ and the time difference $h_1=s_1/2^n$, we have
\begin{equation}\label{e1e4}
 d( y_{j+1} , y_{j} ) \leq 2 B  s/2^n (s_1/2^n)^\alpha .
 \end{equation}
By Theorem \ref{g9999}, the semigroup property of the fixed-time gradient flows gives that $$y_1 =  \Phi \bigl( t_0 + k h_1 , s/2^n , \gamma^n_{s_1}(t^n_{i-1}(s) ) \bigr).$$
For three points $\gamma^n_{s_1}(t^n_{i-1}(s)  )$, $y_{1}$ and $z_i$,
since $z_{i} = \Phi( t^n_{i-1}(s) , s/2^n , \gamma^n_{s_1} (t^n_{i-1}(s)  )) $,
we can apply Theorem \ref{g21} at $\gamma^n_{s_1}( t^n_{i-1}(s) )$ with $T = s/2^n$.
Thus we get
\begin{equation}\label{e1e5}
d( z_{i} , y_{1} ) \leq 2 B s/2^n  (s_1/2^n)^\alpha
 \end{equation}
 since the difference of two fixed-time between the fixed-time gradient from $\gamma^n_{s_1}(t^n_{i-1}(s) )$ to $y_{1}$ and the fixed-time gradient from $\gamma^n_{s_1}(t^n_{i-1}(s) )$ to $z_i$ is less than $s_1/2^n$.

Equations \eqref{e1e4} and \eqref{e1e5} imply that
\begin{equation}\label{e1e3}
\begin{split}
 d( \gamma^n_{s_1} (t^n_i(s)  ) , z_i) &\leq d( \gamma^n_{s_1}(t^n_i(s)  ) , y_{ [is/s_1] - k } ) + \cdots + d( y_{2} , y_1 ) + d( y_1 , z_i)  \\
& \leq 2 B ( [s/s_1] + 1 ) s/2^n (s_1/2^n)^\alpha .
\end{split}
\end{equation}

\textbf{Step 3.}
We are ready for an induction argument to get Equation \eqref{eeeee}. The case $i=1$ is given by \eqref{e1e3} since $z_1 = \gamma^n_s( t^n_1(s)  )$.
Now assume that $$d(\gamma^n_s(t^n_{i-1}(s)  ), \gamma^n_{s_1} (t^n_{i-1}(s)   ) ) \leq 2 B ( [s/s_1] +1) (i-1) s /2^n  e^{ -\lambda_0 s (i-1) / 2^n } ( s_1/ 2^n)^{\alpha} .$$
Then from Equations \eqref{e1e2} and \eqref{e1e3}, we get
\begin{equation*}
\begin{split}
d(\gamma^n_s(t^n_i(s) ), &\gamma^n_{s_1}(t^n_i(s)   ) ) \leq d( \gamma^n_s(t^n_i(s) ),z_{i})+d(z_{i}, \gamma^n_{s_1}(t^n_i(s)   ) ) \\
& \leq d( \gamma^n_s(t^n_i(s)    ), z_{i}) + 2 B ( [s/s_1] +1) s / 2^n ( s_1/ 2^n)^{\alpha}  \\
& \leq e^{ -\lambda_0 s /2^n } d( \gamma^n_s(t^n_{i-1}(s) ), \gamma^n_{s_1}(t^n_{i-1}(s)  ) ) + 2 B ( [s/s_1] +1) s / 2^n ( s_1/ 2^n)^{\alpha}    \\
& \leq e^{ -\lambda_0 s /2^n } 2 B ( [s/s_1] +1) (i-1) s /2^n  e^{ -\lambda_0 s (i-1) / 2^n } ( s_1/ 2^n)^{\alpha}   \\ &+ 2 B ( [s/s_1] +1) s/ 2^n ( s_1/ 2^n)^{\alpha}   \\
& = 2 B( [s/s_1] +1) [ (i-1) s/2^n e^{ -\lambda_0 i s  / 2^n } + s/2^n ] ( s_1/ 2^n)^{\alpha}  \\
& \leq 2 B( [s/s_1] +1) i s /2^n  e^{ -\lambda_0 i s /2^n }  ( s_1/ 2^n)^{\alpha}. \\
\end{split}
\end{equation*}
Letting $i$ be $2^n$, we have
$$d(\gamma^n_s(t_0+s), \gamma^n_{s_1}(t_0+s) ) \leq 2 B( [s/s_1] +1) s  e^{ -\lambda_0  s  }  ( s_1/ 2^n)^{\alpha} .$$
Then taking $n \to \infty$, Claim 3 is proved.


\textbf{Claim 4.} \textit{Limit curves satisfy the semigroup property:} \\\textit{For a limit curve}  $\sigma_{x_0,t_0}$ such that $\sigma_{x_0,t_0}(t_0)= x_0$, \textit{let} $x_1$ \textit{be} $\sigma_{x_0,t_0}(t_0 + s )$ \textit{and} $x_2$ \textit{be} $\sigma_{x_0,t_0}(t_0 + s + s_2)$. \textit{Then} $$x_2 = \sigma_{x_1,t_0 + s}( t_0 + s + s_2 )$$ where $\sigma_{x_1,t_0 + s}$ is the limit curve such that
$\sigma_{x_1,t_0 + s}( t_0 + s  ) = x_1$.

For the proof, we need a piecewise fixed-time gradient curves with step size $h= {s / 2^n}$ from $x_0$ and $t_0$ converging to the limit $\sigma_{x_0,t_0}$ as $n \to \infty$. We can assume that $s_2/s$ is not an integer, i.e $s_2/s > [s_2/s].$
For fixed $s$ and $s_2$, let $$t^n_i(s) = t_0 + i  s  / 2^n $$ for $i=1, \cdots , N(n)$
 and $$t^n_{ N(n) + 1 } = t_0 + s + s_2$$ where $N(n) := 2^n( 1 + [s_2/s] )$. Let $q_0=x_0$ and for $i=1, \cdots , N(n)+1$,
$$q_{i} = \Phi( t^n_{i-1}(s) ,  t^n_i(s) - t^n_{i-1}(s) ,  q_{i-1}  ). $$
Note that Since $t^n_{2_n}(s)=t_0+s$, by Claim 1, $x_1 = \lim_{n \to \infty} q_{2^n}$ and by Claim 3, $x_2 = \lim_{n \to \infty} q_{N(n)+1}$.
Thus we get piecewise fixed-time gradient curves passing through the $q_i$ converging to
the limit curve $\sigma_{x_0,t_0}$ as $n \to \infty$.

For the limit curve $\sigma_{x_1,t_0 + s}$, we need another piecewise fixed-time gradient curves with step size $h= {s / 2^n}$ from $x_1$ and $t_0+s$ converging to
the limit curve $\sigma_{x_1,t_0 + s}$ as $n \to \infty$.
Let $\widehat{p}_{2^n} = x_1$ and $$\widehat{p}_{i} = \Phi( t^n_{i-1}(s),  t^n_i(s) - t^n_{i-1}(s),  \widehat{p}_{i-1}   )$$ for $i = 2^n + 1, \cdots, N(n)+1$.

By Corollary \ref{g77777} (Distance I), we get
$$d(\widehat{p}_{N(n)+1} , q_{N(n)+1} ) \leq  e^{-\lambda s_2 } d(\widehat{p}_{2^n} , q_{2^n} ).$$
Since $\widehat{p}_{2^n}=x_1$ and $x_1 = \lim_{n \to \infty} q_{2^n}$, we get $$\lim_{n \to \infty} d(\widehat{p}_{N(n) + 1} , q_{N(n) + 1} ) =0.$$
Since $x_2 = \lim_{n \to \infty} q_{N(n)+1}$ and $ \sigma_{x_1,t_0 + s}( t_0 + s + s_2 ) =  \lim_{n \to \infty} \widehat{p}_{N(n)+1}$ by Claim 3, the semigroup property of the limit curve is proved.

\textbf{Claim 5.} \textit{Let} $\widetilde{\sigma}$ \textit{be a reparametrization of the limit curve given by} $s \mapsto \widetilde{\sigma}(s):=\sigma_{x_0,t_0}(t_0+t+s)$. \textit{Then for} $s \in I_A$,
 $$d(\widetilde{\sigma}(s),\mu(s)) \leq B' s^{1+\alpha} e^{ -  \lambda_0 s}$$ \textit{where} $\mu=\mu(s)$ \textit{is the fixed-time gradient curve of the function} $x \mapsto F(t_0+t,x)$ \textit{with} $\mu(0)=\widetilde{ \sigma }(0)=\sigma_{x_0,t_0}(t_0+t)$ \textit{and} $\lambda_0 = \min \{ 0,\lambda  \}$.

Note that Claim 5 implies
\begin{equation}\label{eeee1}
\lim_{s \to 0+} {d(\widetilde{\sigma}(s), \widetilde{\sigma}(0) ) \over s} =\lim_{s \to 0+} {d(\mu(s), \mu(0) ) \over s} = || \nabla_{\widetilde{\sigma}(0)} (-F_{t_0+t}) ||.
\end{equation}
The second equality comes from the definition of time-independent gradient curve.

First, we look at $x_0$ and show the case $t=0$. Letting $n=0$ in Claim 2, $p^n_{2^n}$ becomes $\mu(s)$ since $p^n_{2^n}$ is just the time-independent gradient curve of $x \mapsto F(t_0,x)$ when $n=0$. Thus we have $$d( \widetilde{\sigma}(s) , \mu(s) ) \leq  B' s^{1+\alpha} e^{ -\lambda_0 s}  $$  since $\widetilde{\sigma}(s)=\sigma_{x_0,t_0}( t_0+s)$.

Second, when $t>0$, for any point $\widetilde{\sigma}(0) = \sigma_{x_0,t_0}(t_0+t)$, we can do this calculation again since we prove the semigroup property in Claim 4. Then we get the same inequality. Claim 5 is proved.

\textbf{Claim 6.} $\sigma_{x_0,t_0}$ \textit{is a time-dependent gradient curve of} $F$ \textit{at} $x_0$ \textit{and} $t_0$ \textit{such that} $\sigma_{x_0,t_0}(t_0)=x_0$.

For this, we will show the right-side tangent vector of $\sigma_{x_0,t_0}$ at $\sigma_{x_0,t_0}(t_0+t)$ exists and it is equal to the downward gradient vector
$\nabla_{\sigma_{x_0,t_0}(t_0+t)} (- F_{ t_0+ t })$.

Let $\widetilde{\sigma}$ and $\mu$ be as in the Claim 5. Then since $\nabla_{\sigma_{x_0,t_0}(t_0+t)} (- F_{ t_0+ t })$ is the tangent vector of $\mu$ at
 $\mu(0)$,
we need to show that the distance between the direction of $\mu$ at $\widetilde{\sigma}(0)$
and the direction of a geodesic $[\widetilde{\sigma}(0) \widetilde{\sigma}(s)]$ goes to zero as $s \to 0+$.
Indeed, we need to show that the angle $$\lim_{s \to 0+} \angle_{\widetilde{\sigma}(0)} \widetilde{\sigma}(s), \mu(s)$$ is zero. This means that the direction of $\sigma_{x_0,t_0}$ at $\sigma_{x_0,t_0}(t_0+t)$ is equal to the direction of $\mu$ at $\mu(0)$.

When $|| \nabla_{\widetilde{\sigma}(0)} (-F_{t_0+t}) ||$ is zero, it is trivial. So
suppose that $|| \nabla_{\widetilde{\sigma}(0)} (-F_{t_0+t}) ||$ is not zero. So there is a constant $C$ such that $|| \nabla_{\widetilde{\sigma}(0)} (-F_{t_0+t}) || > 2C$. For sufficiently small $s$, we get $d( \mu(s) , \mu(0) ) \geq s C $ and $d( \widetilde{\sigma}(s) , \widetilde{\sigma}(0) ) \geq s C $, where $\mu(0)=\widetilde{\sigma}(0)$,
since $$\lim_{s \to 0+} {d ( \mu(s), \mu(0)  ) \over s} = || \nabla_{\widetilde{\sigma}(0)} (-F_{t_0+t}) || = \lim_{s \to 0+} {d ( \widetilde{\sigma}(s), \widetilde{\sigma}(0)  ) \over s}.$$
By Claim 5, this implies that $\angle_{\widetilde{\sigma}(0)} \widetilde{\sigma}( s ), \mu( s )$ is less than the angle of a Euclidean triangle with two edge lengths $sC$, $sC$ and third edge length less than  $  B' s^{1+\alpha}  e^{ -  \lambda_0 s } $. Then it implies that $\angle_{\widetilde{\sigma}(0)} \widetilde{\sigma}( s ), \mu( s )$ is less than $ {B's^\alpha \over C}e^{ -  \lambda_0 s } $. As $s \to 0+$, this becomes zero. Therefore $$\lim_{s \to 0+} \angle_{\widetilde{\sigma}(0)} \widetilde{\sigma}(s), \mu(s) = 0.$$
So two directions are same.
Since by Equation \eqref{eeee1}, the speed of the right-side tangent vector is same as the length of gradient vector, Claim 6 is proved.

\textbf{Claim 7.} $\sigma_{x_0,t_0}$ \textit{is the unique time-dependent gradient curve of} $F$ \textit{at} $x_0$ \textit{and} $t_0$.

For this, we will show that for any time-dependent gradient curves $\sigma_i$ of $F$ at $x_i$ and $t_0$ such that $\sigma_i(t_0)=x_i$ where $i=1,2$,
$$d( \sigma_1(t_0+s) , \sigma_2(t_0+s) ) \leq e^{ - \lambda s} d(x_1,x_2).$$
Then clearly, this estimate implies that any two time-dependent gradient curves with same initial position and time are same. This yields uniqueness of $\sigma_{x_0,t_0}$.

Let $f(s)$ be $d( \sigma_1(t_0+ s), \sigma_2(t_0+ s) )$.
By Lemma \ref{l222} with $x=\sigma_1(t_0 + s)$ and $y=\sigma_2( t_0+s)$, we have
$$ \langle \xi_1(s)  ,  \nabla_{ \sigma_1(t_0 + s) } (- F_{ t_0+ s }) \rangle + \langle  \xi_2(s) , \nabla_{ \sigma_2(t_0 + s) } (- F_{ t_0+ s }) \rangle   \geq  \lambda f(s)$$
where $\xi_1(s)$ is the direction of $[\sigma_1( t_0 + s) \sigma_2( t_0+ s )]$ and $\xi_2(s)$ is the direction of $[\sigma_2( t_0+s) \sigma_1( t_0+s)]$.
By first variation formula, we get $$ f^+(s) \leq - \lambda f(s).$$
Solving this, Claim 7 is proved.
\end{proof}

From the proof of Theorem \ref{g7}, we extract the following corollaries.
\begin{cor}\label{g8}
For any time-dependent gradient curves $\sigma_i$ of $F$ at $x_i$ and $t_0$ such that $\sigma_i(t_0)=x_i$ where $i=1,2$,
$$d( \sigma_1(t_0+s) , \sigma_2(t_0+s) ) \leq e^{ - \lambda s} d(x_1,x_2).$$
\end{cor}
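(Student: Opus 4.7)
The plan is to adapt Claim 7 in the proof of Theorem \ref{g7} essentially verbatim, since that claim already establishes the desired estimate (it was stated there as a tool for proving uniqueness when $x_1=x_2$, but the argument does not actually use $x_1=x_2$).

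First I would set $f(s) := d(\sigma_1(t_0+s),\sigma_2(t_0+s))$ and aim to control $f^+(s)$ for $s \geq 0$. Denote by $\xi_1(s)$ the initial direction of the geodesic $[\sigma_1(t_0+s)\,\sigma_2(t_0+s)]$ at $\sigma_1(t_0+s)$, and by $\xi_2(s)$ the initial direction of $[\sigma_2(t_0+s)\,\sigma_1(t_0+s)]$ at $\sigma_2(t_0+s)$. Because each $\sigma_i$ is a time-dependent gradient curve, at time $t_0+s$ the right-side tangent vector of $\sigma_i$ equals $\nabla_{\sigma_i(t_0+s)}(-F_{t_0+s})$. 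The first variation formula (applied where $f(s)>0$, so that the geodesic between $\sigma_1(t_0+s)$ and $\sigma_2(t_0+s)$ is well defined) then yields
\[
f^+(s) = -\langle \xi_1(s),\nabla_{\sigma_1(t_0+s)}(-F_{t_0+s})\rangle - \langle \xi_2(s),\nabla_{\sigma_2(t_0+s)}(-F_{t_0+s})\rangle.
\]
The crucial point is that both gradients are taken with respect to the \emph{same} function $F_{t_0+s}$, so the time-dependence plays no role at a fixed instant and no H\"{o}lder hypothesis is needed here.

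Next I would apply Lemma \ref{l222} to the $\lambda$-convex function $F_{t_0+s}$ at the two points $\sigma_1(t_0+s)$ and $\sigma_2(t_0+s)$, which gives
\[
\langle \xi_1(s),\nabla_{\sigma_1(t_0+s)}(-F_{t_0+s})\rangle + \langle \xi_2(s),\nabla_{\sigma_2(t_0+s)}(-F_{t_0+s})\rangle \geq \lambda f(s).
\]
Combining these two displays produces the differential inequality $f^+(s) \leq -\lambda f(s)$. Since $\sigma_1$ and $\sigma_2$ are locally Lipschitz, so is $f$, and so $f$ is absolutely continuous and the inequality $f^+(s) \leq -\lambda f(s)$ holds almost everywhere. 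A standard Gronwall argument (multiplying by $e^{\lambda s}$ and integrating) yields $f(s) \leq e^{-\lambda s} f(0) = e^{-\lambda s} d(x_1,x_2)$, as required. When $f(s)=0$ on some subinterval the inequality is trivial, and once $f$ vanishes it stays zero by uniqueness (Claim 7 of Theorem \ref{g7}), so there is no issue with the geodesic degenerating.

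The main subtle point, as in Claim 7, is justifying the first variation formula at times where $f(s)>0$ despite only knowing right-side tangent vectors of the $\sigma_i$ rather than two-sided derivatives; this is handled exactly as in the ambient theorem, using that the first variation formula for \catzero\ spaces requires only one-sided information. Apart from that, the estimate is a direct consequence of Lemma \ref{l222} and the differential inequality, so no new ideas beyond those already present in the proof of Theorem \ref{g7} are needed.
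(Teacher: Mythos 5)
Your argument is exactly the paper's: Corollary \ref{g8} is proved there by citing Claim 7 of Theorem \ref{g7}, which sets up $f(s)=d(\sigma_1(t_0+s),\sigma_2(t_0+s))$, combines the first variation formula with Lemma \ref{l222} applied to the single function $F_{t_0+s}$ to get $f^+(s)\leq -\lambda f(s)$, and integrates. Your additional remarks on the degenerate case $f(s)=0$ and on one-sided tangent vectors are reasonable refinements but do not change the route.
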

\begin{proof}
This is Claim 7 in the proof of Theorem \ref{g7}.
\end{proof}

\begin{cor}\label{g10}
For any time-dependent gradient curve $\sigma_{x_0,t_0}$ and its piecewise fixed-time gradient curves of $p^n_{i}$ from Theorem \ref{g7},
$$d( \sigma_{x_0,t_0}( t_0+s ) , p^n_{2^n} ) \leq B' s^{1+\alpha} \tfrac{1}{2^{\alpha n}} e^{ -\lambda_0 s} $$ where $\lambda_0 = \min \{ 0,\lambda  \}$.
\end{cor}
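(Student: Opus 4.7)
The plan is to observe that this corollary is essentially a restatement of an estimate already obtained inside the proof of Theorem \ref{g7}, and to extract it by telescoping the one-step bound from Claim 1.

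First, I would recall the one-step estimate proved by induction in Claim 1 of Theorem \ref{g7}:
\[
d\bigl(p^n_{2^n}(s),\, p^{n+1}_{2^{n+1}}(s)\bigr) \;\le\; B s\, e^{-\lambda_0 s}\Bigl(\tfrac{s}{2^{n+1}}\Bigr)^{\alpha},
\]
with $\lambda_0 = \min\{0,\lambda\}$. This is the crucial input, obtained by combining Corollary \ref{g77777} (Distance I) applied step-by-step, with Theorem \ref{g21} (Distance II) used at the auxiliary points $z_i$, so that each halving of the step size contributes a factor $(1/2)^{\alpha}$ times the previous error.

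Next, for any $m > n$, I would telescope with the triangle inequality,
\[
d\bigl(p^{n}_{2^{n}}(s),\, p^m_{2^m}(s)\bigr)
\;\le\; \sum_{k=n}^{m-1} d\bigl(p^{k}_{2^{k}}(s),\, p^{k+1}_{2^{k+1}}(s)\bigr)
\;\le\; B s^{1+\alpha} e^{-\lambda_0 s}\sum_{k=n+1}^{m}\Bigl(\tfrac{1}{2^{k}}\Bigr)^{\alpha},
\]
and bound the geometric tail by $\bigl(\tfrac{1}{2^n}\bigr)^{\alpha}\cdot \tfrac{1}{1-2^{-\alpha}}$. With $B' := B/(1-2^{-\alpha})$ this gives
\[
d\bigl(p^{n}_{2^{n}}(s),\, p^m_{2^m}(s)\bigr) \;\le\; B' s^{1+\alpha}\tfrac{1}{2^{\alpha n}} e^{-\lambda_0 s}.
\]
Finally, letting $m\to \infty$ and invoking the identification $\sigma_{x_0,t_0}(t_0+s) = \lim_{m\to\infty} p^m_{2^m}(s)$ from the construction in Theorem \ref{g7} (Claim 1 showed this limit exists; continuity of the distance does the rest), I conclude
\[
d\bigl(\sigma_{x_0,t_0}(t_0+s),\, p^n_{2^n}\bigr) \;\le\; B' s^{1+\alpha}\tfrac{1}{2^{\alpha n}} e^{-\lambda_0 s}.
\]

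There is no real obstacle here, since all the hard analytic work (the per-step doubling estimate, the Cauchy property, and the semigroup identification of the limit with the time-dependent gradient curve) was already done inside Theorem \ref{g7}. The only thing to be careful about is bookkeeping: making sure the parameters $B$, $B'$, $\alpha$ and the exponential factor $e^{-\lambda_0 s}$ are exactly the ones produced by the telescoped geometric series, and that the step-size constraint $h = s/2^n \in \overline{I_A}\cap [0,T]\cap[0,B_0]$ from the hypotheses of Theorem \ref{g7} is inherited. Once that is checked, the statement follows immediately from Claim 2 of the Theorem.
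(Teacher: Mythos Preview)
Your proposal is correct and follows exactly the paper's approach: the paper's proof of this corollary simply cites Claim 2 of Theorem \ref{g7}, which is precisely the telescoping of the one-step estimate from Claim 1 followed by letting $m\to\infty$, with $B' = B/(1-2^{-\alpha})$. There is nothing to add.
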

\begin{proof}
This is Claim 2 in the proof of Theorem \ref{g7}.
\end{proof}

\begin{cor}\label{g888888}
For any time-dependent gradient curve $\sigma_{x_0,t_0}$, let $x_1$ be $\sigma_{x_0,t_0}(t_0 + s)$ and $x_2$ be $\sigma_{x_0,t_0}(t_0 + s + s_2)$. Then $$x_2 = \sigma_{x_1,t_0 + s}( t_0 + s + s_2 ).$$
\end{cor}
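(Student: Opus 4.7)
The corollary asserts that the family of time-dependent gradient curves enjoys a semigroup property. My plan is to deduce it almost immediately from two ingredients already established in Theorem \ref{g7}: the fact that the defining property of a time-dependent gradient curve (Definition \ref{de99}) is purely local in time, and the uniqueness statement for the Cauchy problem.

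First, I would consider the restriction $\widetilde\sigma := \sigma_{x_0,t_0}|_{[t_0+s,\infty)}$. By construction $\widetilde\sigma(t_0+s) = x_1$. Moreover, since $\sigma_{x_0,t_0}$ is a time-dependent gradient curve of $F$ at $x_0$ and $t_0$, for every $t \in [t_0+s,\infty)$ the right-side tangent vector $\sigma_{x_0,t_0}'(t+)$ exists and equals $\nabla_{\sigma_{x_0,t_0}(t)}(-F_t)$; this property transfers verbatim to $\widetilde\sigma$ because it is a pointwise-in-$t$ condition unaffected by restricting the domain. Hence $\widetilde\sigma$ is itself a time-dependent gradient curve of $F$ at $x_1$ and $t_0+s$.

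Next, I invoke the uniqueness part of Theorem \ref{g7} (equivalently Corollary \ref{g8} applied with $x_1 = x_2$, which forces $d(\sigma_1(t_0+s+\tau),\sigma_2(t_0+s+\tau)) = 0$ for all $\tau \ge 0$): there is only one time-dependent gradient curve of $F$ at the initial data $(x_1,t_0+s)$, namely $\sigma_{x_1,t_0+s}$. Therefore $\widetilde\sigma \equiv \sigma_{x_1,t_0+s}$ on $[t_0+s,\infty)$. Evaluating at $t_0+s+s_2$ gives
\[
x_2 = \sigma_{x_0,t_0}(t_0+s+s_2) = \widetilde\sigma(t_0+s+s_2) = \sigma_{x_1,t_0+s}(t_0+s+s_2),
\]
which is exactly the conclusion.

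There is essentially no obstacle here: the real work was already done in Claims 6 and 7 of Theorem \ref{g7}. (One could alternatively appeal directly to Claim 4, which proved the semigroup property for the specific limit curve produced in the existence argument; combined with uniqueness, any time-dependent gradient curve coincides with that limit curve, so the two formulations are equivalent.) The only point worth being careful about is verifying that the restriction $\widetilde\sigma$ still satisfies the local Lipschitz and right-tangent-vector conditions of Definition \ref{de99}, but both are immediate since these are pointwise or local-in-time requirements.
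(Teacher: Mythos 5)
Your proof is correct, but it takes a genuinely different route from the paper. The paper proves Corollary \ref{g888888} by pointing directly at Claim 4 inside the proof of Theorem \ref{g7}, where the semigroup property is established for the limit curve by comparing two discrete approximations (the points $q_i$ started at $(x_0,t_0)$ and $\widehat{p}_i$ started at $(x_1,t_0+s)$) and contracting their distance via Distance I (Corollary \ref{g77777}). You instead restrict $\sigma_{x_0,t_0}$ to $[t_0+s,\infty)$, observe that the restriction is itself a time-dependent gradient curve with initial data $(x_1,t_0+s)$ because Definition \ref{de99} is a pointwise-in-$t$ condition, and then invoke uniqueness (Claim 7 / Corollary \ref{g8}) to identify it with $\sigma_{x_1,t_0+s}$. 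Your argument is shorter and uses only the finished statement of Theorem \ref{g7}; the one thing it could not do is replace Claim 4 inside the proof of the theorem itself, since there the semigroup property is needed \emph{before} the limit curve is known to be a gradient curve (Claim 4 feeds into Claim 5, which feeds into Claims 6 and 7). As a proof of the standalone corollary, stated after the theorem is complete, your derivation is valid and not circular, and you correctly flag the paper's route as an alternative.
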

\begin{proof}
This is Claim 4 in the proof of Theorem \ref{g7}.
\end{proof}

\begin{examp}\label{ex11111111}
Let $X = \mathbb{R}$ and $F: \mathbb{R}_{\geq 0} \times X \to \mathbb{R}$ given by $$F(t,x)=-(t+1)x.$$
Then a downward gradient vector $\nabla_x ( - F_t)$ of $F_t$ at $x$ is $(t+1)$.
This example satisfies the conditions of Theorem \ref{g7}. In particular, $$\rho( \nabla_x ( -F_t) , \nabla_x ( -F_{t'}) ) = |t-t'|.$$
So we can get a time-dependent gradient curves $\sigma_{x_0}:[0, \infty) \to X$ given by $$\sigma_{x_0}(t) = 1/2 t^2 + t + x_0.$$
Let us look at the gradient flow $\Phi: \mathbb{R}_{\geq 0} \times X \times [0,1] \to X$ of $F$. It is given by $$\Phi(t,x_0,h) = (t+1) h + x_0.$$ Then a value $\rho(x_0,t):= \sup_{h \neq k} { d( \Phi(t,x_0,h) , \Phi(t,x_0,k) )   \over |h-k|     }$ is equal to $t+1$.
But since $\rho(x_0,t)$ is not bounded in $t$, $\Phi$ does not have linear speed growth (see the following definition for linear speed growth ).
So we have a time-dependent gradient curve $\sigma_{x_0}$ defined on $[0,\infty)$ even if $\Phi$ doesn't have linear speed growth.
\end{examp}

\begin{defn}
Given a flow $\Phi: \mathbb{R}_{\geq 0} \times X \times [0,1] \to X$,
and fixed $t,r,l$ and $x$, let $$\rho(x,t;r,l):= \sup_{ d(x,y)<r , |t-s| \leq l} \rho(y,s)$$ where $\rho(x,t):= \sup_{h \neq k} { d( \Phi(t,x,h) , \Phi(t,x_0,k) )   \over |h-k|     }$.

Then $\Phi$ is said to have \emph{linear speed growth} if there is a point $x \in X$ with positive constants $c_1(x)$ and $c_2(x)$ such that for all $r>0$ and $t,l >0$,
$$\rho(x,t;r,l) \leq c_1(x) r + c_2(x).$$
\end{defn}

\section{Application: Simple pursuit}
\subsection{Simple pursuit curves chasing convex sets}
Suppose $(X,d)$ is a \catzero\ space.
We use the Hausdorff metric $d_H$ on the set of closed convex sets.

Let $d_Y: X \to \mathbb{R}$ be the distance to a complete convex subset $Y \subset X$, defined by $$x \mapsto d_Y(x):= \min_{y \in Y} d(y,x).$$
\begin{prop}(See \cite[Page 178]{bh})\label{de2}
Let $Y$ be a complete convex subset. Then
the distance function
$d_Y$ is convex and for $x \in X$, there is a unique point $x_0$ of $Y$ such that $d_Y(x)=d(x_0,x)$.
\end{prop}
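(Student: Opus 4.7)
The plan is to establish the three assertions of the proposition in order: existence of a nearest point, uniqueness of that nearest point, and convexity of $d_Y$; the first two give a well-defined projection map, which is what makes the convexity argument transparent.

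For existence, I fix $x \in X$ and regard $f(y) := d(x,y)$ as a real-valued function on $Y$. The function $f$ is continuous, nonnegative, and convex on $Y$; the convexity of $d(x,\cdot)$ on a \catzero\ space follows from Proposition \ref{g1} together with continuity (passing from midpoint inequalities to full convexity). Choosing any $y_0 \in Y$ and setting $R := 2 d(x,y_0) + 1$, the triangle inequality shows that any minimizer of $f$ lies in $Y \cap \{ y \in X : d(y_0,y) \le R \}$, which is a nonempty, bounded, closed, convex subset of $X$. Lemma \ref{l111111} then supplies a minimizer.

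For uniqueness, suppose $y_0, y_1 \in Y$ both realize $d_Y(x) = r$. Their midpoint $y_{1/2}$ on $[y_0 y_1]$ lies in $Y$ by convexity, and applying Proposition \ref{g1} with the fixed point taken to be $x$ and the geodesic endpoints taken to be $y_0, y_1$ at parameter $t=1/2$ gives
$$ d^2(x, y_{1/2}) \le \tfrac{1}{2} r^2 + \tfrac{1}{2} r^2 - \tfrac{1}{4} d^2(y_0, y_1) = r^2 - \tfrac{1}{4} d^2(y_0, y_1). $$
If $y_0 \neq y_1$ this forces $d(x, y_{1/2}) < r$, contradicting minimality; hence $y_0 = y_1$.

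For the convexity of $d_Y$, given $x_0, x_1 \in X$ I take $y_0, y_1 \in Y$ to be their (now unique) nearest points, and let $x_t$ and $y_t$ denote the points on $[x_0 x_1]$ and $[y_0 y_1]$ at parameter $t \in [0,1]$. Since $[y_0 y_1] \subset Y$ by convexity of $Y$, one has $d_Y(x_t) \le d(x_t, y_t)$. The standard \catzero\ fact that the distance between two geodesics with matching affine parameter is a convex function of the parameter (itself a consequence of Proposition \ref{g1} applied at midpoints, combined with continuity) yields
$$ d(x_t, y_t) \le (1-t) d(x_0, y_0) + t\, d(x_1, y_1) = (1-t) d_Y(x_0) + t\, d_Y(x_1), $$
which is the desired convexity. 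The only subtlety to keep in mind is to resist the tempting but invalid direct argument: $d_Y$ is a pointwise infimum of the convex functions $d(y,\cdot)$ over $y \in Y$, and infima of convex functions are not in general convex; one genuinely needs the projection to be in hand so that the geodesic joining the two projected points can serve as the witness.
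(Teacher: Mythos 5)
Your proof is correct. Note that the paper does not actually prove this proposition; it is quoted from Bridson--Haefliger (II.2.2 and II.2.4 there), so there is no in-paper argument to compare against line by line. Your three steps are all sound: the restriction to $Y \cap \{d(y_0,\cdot)\le R\}$ legitimately reduces existence to Lemma \ref{l111111} (that set is closed, bounded and convex, since $Y$ is complete hence closed and balls in a \catzero\ space are convex, and points outside it cannot compete with $y_0$); the midpoint argument from Proposition \ref{g1} gives uniqueness; and convexity of $t\mapsto d(x_t,y_t)$ plus $[y_0y_1]\subset Y$ gives convexity of $d_Y$. Your closing caveat about infima of convex functions is well taken --- that is exactly why the projection must be constructed first.

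The one point where you genuinely diverge from the textbook proof is existence. Bridson--Haefliger take a minimizing sequence $y_n\subset Y$ and use the $2$-convexity inequality of Proposition \ref{g1} at midpoints to show $\tfrac14 d^2(y_n,y_m)\le \tfrac12 d^2(x,y_n)+\tfrac12 d^2(x,y_m)-d_Y(x)^2\to 0$, so the sequence is Cauchy and converges in the complete set $Y$; this is elementary and quantitative (it simultaneously yields uniqueness). You instead invoke Lemma \ref{l111111}, which rests on Helly's theorem for \catzero\ spaces --- heavier machinery, but perfectly adequate here and arguably more in the spirit of this paper, which already uses that lemma to produce minimizers of the step-energy functionals. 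Either route is acceptable; if you want the argument self-contained at the level of Proposition \ref{g1} alone, the Cauchy-sequence version is the one to write down.
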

This point $x_0$ is called the \emph{footpoint of $x$ in $Y$}.




\begin{prop}\label{ppp2}
Let $Y_t$ be a curve of closed convex sets in $X$ such that $d_H(Y_t,Y_{t'}) \leq |t - t'|$ and $q_t$ be the footpoint of $p$ in $Y_t$.
Then we have $$|d(p,q_t) - d(p,q_{t'})| \leq |t-t'|$$ and $$ d(q_t,q_{t'})  \leq \bigl(\sqrt{|t - t'|} + 2 \sqrt{ d(p,q_t)+ |t-t'| } \;\bigr)\sqrt{|t-t'|}$$ for any $t$ and $t'$.
\end{prop}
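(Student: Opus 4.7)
The plan is to handle the two inequalities separately, with part (1) as a short warm-up that feeds into part (2).

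For part (1), I would use the Hausdorff hypothesis to produce a witness $z' \in Y_{t'}$ with $d(q_t, z') \leq |t - t'|$. Since $q_{t'}$ is the footpoint of $p$ in $Y_{t'}$, minimality gives $d(p, q_{t'}) \leq d(p, z') \leq d(p, q_t) + |t - t'|$; the symmetric argument interchanging $t$ and $t'$ then yields the two-sided estimate.

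For part (2), the key idea is the obtuse-angle property of the footpoint combined with the CAT$(0)$ comparison inequality. Specifically, I would first note that the footpoint $q_{t'}$ of $p$ in the complete convex set $Y_{t'}$ satisfies $\angle_{q_{t'}}(p, z) \geq \pi/2$ for every $z \in Y_{t'}$ (this comes from the First Variation Formula applied along $[q_{t'}z]$, since $q_{t'}$ minimizes distance to $p$). Consequently, in the Euclidean comparison triangle for $\triangle p q_{t'} z$ the angle at $\tilde q_{t'}$ is $\geq \pi/2$, so the Euclidean law of cosines gives the Pythagorean-type inequality
\begin{equation*}
d(p, z)^2 \ \geq \ d(p, q_{t'})^2 + d(q_{t'}, z)^2.
\end{equation*}

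Now I would pick $z \in Y_{t'}$ with $d(q_t, z) \leq |t - t'|$ (from the Hausdorff hypothesis), and bound the right-hand difference using part (1): factoring $d(p, z)^2 - d(p, q_{t'})^2 = (d(p, z) + d(p, q_{t'}))(d(p, z) - d(p, q_{t'}))$, the triangle inequality with $d(q_t,z)\leq |t-t'|$ and part (1) give $d(p, z) - d(p, q_{t'}) \leq 2|t - t'|$ and $d(p, z) + d(p, q_{t'}) \leq 2(d(p, q_t) + |t - t'|)$. This yields
\begin{equation*}
d(q_{t'}, z) \ \leq \ 2\sqrt{|t - t'|}\,\sqrt{d(p, q_t) + |t - t'|}.
\end{equation*}
A final triangle inequality $d(q_t, q_{t'}) \leq d(q_t, z) + d(z, q_{t'}) \leq |t-t'| + 2\sqrt{|t-t'|}\sqrt{d(p,q_t)+|t-t'|}$ and the factorization $|t-t'| = \sqrt{|t-t'|}\cdot\sqrt{|t-t'|}$ produce the stated bound.

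The main subtlety is verifying the obtuse-angle property at the footpoint and combining it cleanly with the CAT$(0)$ comparison; everything else is triangle-inequality bookkeeping. I expect no real obstacle beyond being careful with the algebra of the two-factor bound when squaring, which is why part (1) must be proved first and then inserted into the factoring of $d(p,z)^2 - d(p,q_{t'})^2$.
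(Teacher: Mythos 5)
Your proposal is correct and follows essentially the same route as the paper: part (1) by minimality plus a Hausdorff witness, and part (2) by a Pythagorean-type inequality at a footpoint, factoring the difference of squares, bounding each factor via part (1), and finishing with a triangle inequality. The only (inessential) differences are that you apply the Pythagorean inequality at $q_{t'}$ in $Y_{t'}$ with a witness near $q_t$, whereas the paper applies it at $q_t$ in $Y_t$ with the footpoint of $q_{t'}$, and that you derive it from the obtuse-angle property of projections while the paper derives the same inequality from the $2$-convexity of $x \mapsto d^2(p,x)$ (Proposition \ref{g1}) along a geodesic inside the convex set.
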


\begin{proof}
Let $x$ be the footpoint of $q_t$ in $Y_{t'}$, and $y$ be the footpoint of $q_{t'}$ in $Y_t$.
For all $\epsilon > 0$, $$d( x, q_t) \leq |t-t'| + \epsilon$$ and $$d(y ,q_{t'}) \leq |t-t'| + \epsilon$$ since $d_H(Y_t,Y_{t'}) \leq |t - t'|$. Hence,
\begin{equation}\label{q11}
d(x,q_t) \le |t-t'| \;\; and \;\; d(y,q_{t'}) \le |t-t'|.
\end{equation}
By the triangle inequality, we have
\begin{equation}\label{q12}
d(p ,x ) - d(p ,q_t) \leq |t-t'|
\end{equation}
and
\begin{equation}\label{q13}
d(p , y) - d(p ,q_{t'}) \leq |t-t'|.
\end{equation}
By the definition of $q_{t'}$, we get
$$ d( p, q_{t'}) \leq d(p,x) \leq d(p ,q_t) + |t-t'|.$$
Similarly, by the definition of $q_{t}$, $$ d(p ,q_{t}) \leq d(p,y) \leq d(p, q_{t'}) + |t-t'|.$$
This means
\begin{equation}\label{q14}
 \bigl|  d(p ,q_t) - d(p ,q_{t'}) \bigr| \leq |t-t'|.
\end{equation}

Let $z_s$ be the point on $[z_0 z_1]$ such that $sd( z_0,z_1) = d(z_0 ,z_s)$ and $(1-s)d(z_0,z_1) = d(z_1,z_s)$ for $0 \leq s \leq 1$.
For $[q_t y]$, suppose that  $z_0 = q_t$, and $z_1 = y$.
Since $q_t$ and $y$ are in $Y_t$, $[q_t y]$ lies in $Y_t$.
Then, by Proposition \ref{g1}, we get
$$d( p, q_t)^2 \leq d(p ,z_s)^2 \leq (1-s) d(p ,q_t)^2 + s d(p, y)^2 -s(1-s) d( q_t ,y)^2.$$
This gives $$s(1-s) d( q_t, y)^2 \leq   s d(p, y)^2 - s d( p, q_t)^2.$$
Dividing both sides by $s$ and taking $s \to 0$, we have $$d( q_t ,y)^2 \leq    d(p, y)^2 - d(p ,q_t)^2.$$
Then it becomes $$ d (q_t ,y)^2 \leq (d(p,y)+d(p,q_t))(d(p,y) - d(p,q_t)).$$
With \eqref{q13} and \eqref{q14}, it yields $$ d(q_t ,y)^2 \leq (2 d(p,q_t)+ 2|t-t'|  )( 2 |t-t'| ).$$
Then $$ d(q_t ,y) \leq 2 \sqrt{ d(p,q_t)+ |t-t'| } \sqrt{ |t-t'| }.$$
Therefore by \eqref{q11}, we have $$ d( q_t ,q_{t'}) \leq |t - t'|  + 2 \sqrt{ d( p,q_t) + |t-t'| } \sqrt{ |t-t'| }. $$
\end{proof}

\begin{figure}[h]
\centering{\epsfig{file=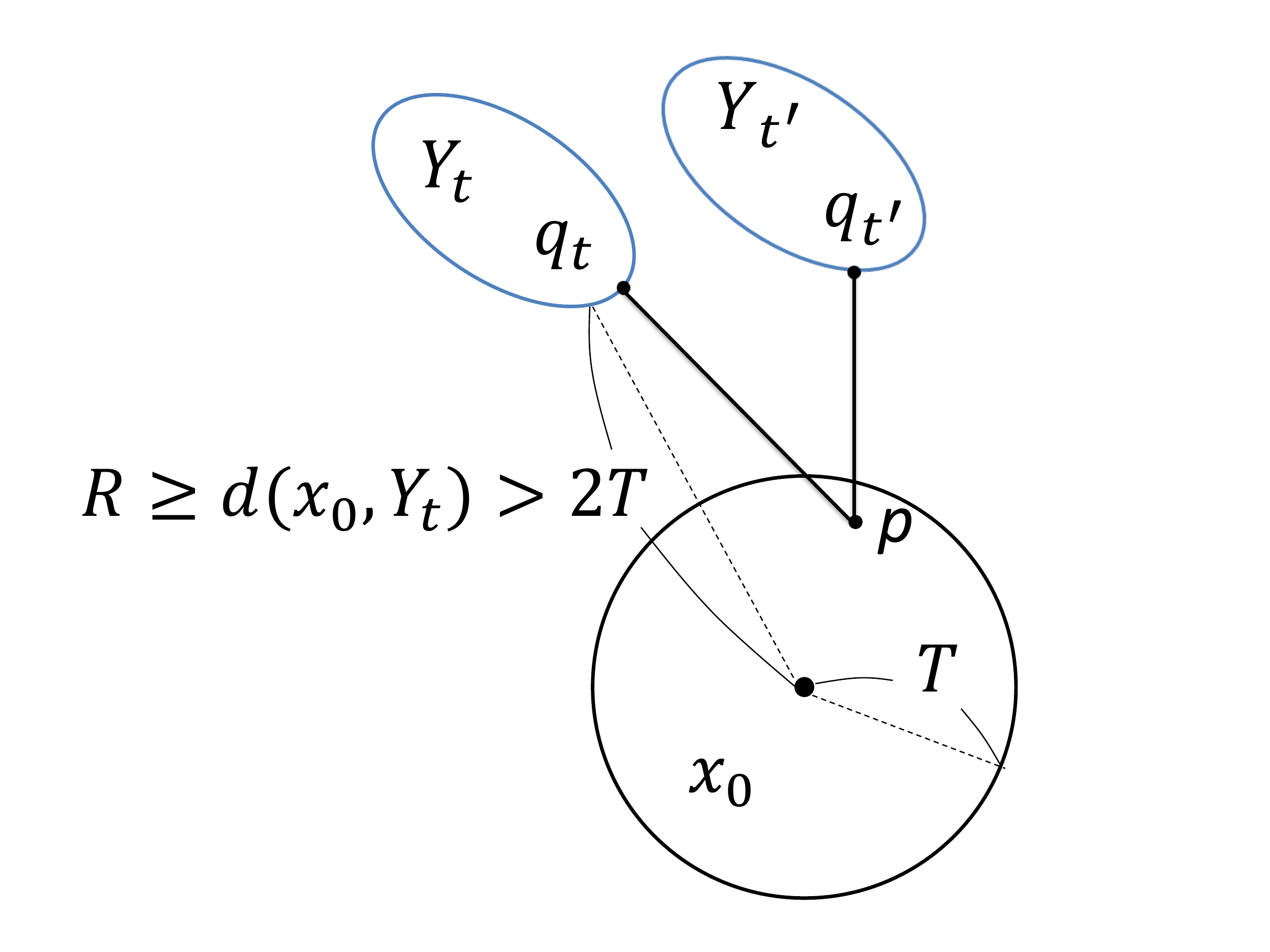  , height=6.4cm, width=9cm}}
\caption{ The footpoint $q_t$ of $p$ in $Y_t$ and the closed ball $B_{x_0}(L)$ }
\end{figure}

\begin{theorem}[ $\alpha= 1/2$ ]\label{ppp3}
Let $Y_t$ be a curve of closed convex sets in $X$ such that $t \in [0, \infty)$ and $d_H(Y_t,Y_{t'}) \leq |t - t'|$.
For any $x_0 \in X$,
if there is a constant $T>0$ such that $\min_{t \in [0,T]} d_{Y_t}(x_0) > 2T$,
then inside the closed ball $B_{x_0}(T)$, there is a unique time-dependent gradient curve $\sigma_{x_0}:[0,T] \to X$
of the function $F$ where $F(t,p):=d_{Y_t}(p)$ such that $\sigma_{x_0}(0)= x_0$.
\end{theorem}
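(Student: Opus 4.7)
The plan is to verify the four hypotheses of Theorem \ref{g7} for the function $F(t,p) = d_{Y_t}(p)$ and then read off existence and uniqueness. Hypotheses (1)--(3) are almost immediate: $F_t$ is $1$-Lipschitz in $p$ by the triangle inequality (so locally Lipschitz); Proposition \ref{de2} gives that $F_t$ is convex, so $F$ is $\lambda$-convex with $\lambda = 0$; and the first inequality of Proposition \ref{ppp2} gives exactly $|F(t,p) - F(t',p)| \le |t-t'|$, so $F$ is $1$-Lipschitz in $t$. The real content lies in verifying the H\"older condition (4) on the gradients and, in parallel, in checking that the curve produced by Theorem \ref{g7} actually stays inside $B_{x_0}(T)$.

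For (4), I would first identify the gradient. When $p \notin Y_t$, let $q_t$ denote the footpoint of $p$ in $Y_t$ (unique by Proposition \ref{de2}). Since $d_{Y_t}$ is $1$-Lipschitz and decreases at unit rate along the geodesic from $p$ to $q_t$ (by the first variation formula applied to the distance-to-footpoint function), the downward gradient $\nabla_p(-F_t)$ is the unit tangent vector at $p$ in the direction of $[p\,q_t]$. Thus comparing $\nabla_p(-F_t)$ and $\nabla_p(-F_{t'})$ in $T_p$ reduces to comparing two unit tangent directions, and the tangent-cone distance between them is $\rho = 2\sin(\theta/2)$, where $\theta$ is the Alexandrov angle at $p$ between $[p\,q_t]$ and $[p\,q_{t'}]$. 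By \catzero\ angle comparison and the Euclidean law of cosines applied to $\widetilde\triangle p\,q_t\,q_{t'}$,
\begin{equation*}
2\sin(\theta/2) \;\le\; \frac{d(q_t, q_{t'})}{\min\bigl(d(p,q_t), d(p,q_{t'})\bigr)}.
\end{equation*}

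Next I would feed in the two estimates from Proposition \ref{ppp2} to convert this into a $\sqrt{|t-t'|}$ bound. The hypothesis $\min_{t \in [0,T]} d_{Y_t}(x_0) > 2T$ combined with the triangle inequality gives $d_{Y_t}(p) \ge d_{Y_t}(x_0) - d(x_0,p) > T$ for every $p \in B_{x_0}(T)$ and every $t \in [0,T]$, yielding the uniform lower bound $\min(d(p,q_t), d(p,q_{t'})) > T$. On the other hand Proposition \ref{ppp2} gives $d(q_t, q_{t'}) \le B_1 \sqrt{|t-t'|}$ for an explicit constant $B_1$ depending on $T$ and on $\sup_{t \in [0,T]} d_{Y_t}(x_0)$ (which is finite by $d_H(Y_t, Y_0) \le t$). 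Combining the two yields $\rho(\nabla_p(-F_t), \nabla_p(-F_{t'})) \le B |t-t'|^{1/2}$ with $B = B_1/T$, exactly condition (4) with $\alpha = 1/2$.

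The subtle point and main obstacle is that condition (4) in Theorem \ref{g7} is stated for all $x \in X$, whereas my gradient estimate is only valid on $B_{x_0}(T)$ (and requires the uniform lower bound on the distance to $Y_t$). I would handle this by observing that $\|\nabla_p(-F_t)\| \le 1$ everywhere, so any time-dependent gradient curve starting at $x_0$ has speed at most $1$, hence remains in $B_{x_0}(T)$ on the interval $[0,T]$, and in fact remains at distance $>T$ from every $Y_t$. This lets me either restrict the analysis to the closed convex neighborhood $B_{x_0}(T)$ (which is itself a \catzero\ space on which $F$ satisfies (1)--(4) in their global form) or, equivalently, modify $F$ outside a slightly larger ball without affecting the flow. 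Once this locality point is in place, Theorem \ref{g7} delivers the unique time-dependent gradient curve $\sigma_{x_0}: [0,T] \to X$ with $\sigma_{x_0}(0) = x_0$, and the speed bound confines it to $B_{x_0}(T)$, completing the proof.
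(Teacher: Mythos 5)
Your proposal is correct and follows essentially the same route as the paper: verify hypotheses (1)--(4) of Theorem \ref{g7}, with the Hölder condition obtained by combining the footpoint estimate of Proposition \ref{ppp2} with the lower bound $d(p,q_t) > T$ on $B_{x_0}(T)$ and a comparison-triangle angle bound, and then apply Theorem \ref{g7} restricted to the ball. The only (cosmetic) difference is that you bound the chordal tangent-cone distance by $d(q_t,q_{t'})/\min(d(p,q_t),d(p,q_{t'}))$ directly, whereas the paper bounds the comparison angle via $\sin^{-1}$ being $2/\sqrt{3}$-Lipschitz on $[0,1/2]$; both yield the same $\sqrt{|t-t'|}$ estimate.
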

$\sigma_{x_0}$ is called a \emph{simple pursuit curve} chasing the convex set $Y_t$ with an initial $x_0$.
\begin{proof}
If it exists, its speed is one. So it must be inside the ball $B_{x_0}(T)$.
For $x,z \in X$,
let $x_t$ be the footpoint of $x$ in $Y_t$ and $z_t$ be the footpoint of $z$ in $Y_t$.
Suppose that $d(x ,x_t) \leq d(z , z_t)$. Let $z'$ be the point on $[z z_t]$ at distance $d(x , x_t)$ from $z_t$.
Since $z'$ is the footpoint of $z$ in $\{ y \in X \; | \; d_{Y_t}(y) \leq d( x, x_t)  \}$,  $$d(z' ,z) \leq d( x ,z ).$$ Then we have $$F(t,z) - F(t,x) = d( z, z_t) - d( x , x_t) = d(z' ,z) \leq d( x ,z ).$$
Thus $F$ is 1-Lipschitz on $X$.

From Proposition \ref{de2}, $F$ is $\lambda$-convex on $X$ with $\lambda =0$. By Proposition \ref{ppp2}, we know that $F$ is L-Lipschitz in $t$ with $L=1$. Thus we see that $F$ satisfies three conditions of Theorem \ref{g7}.

Next, we will show that $F$ satisfies last condition of Theorem \ref{g7}.
Let $R = R(T,x_0) := \max_{t \in [0,T]} d_{Y_t}(x_0)$. Then $R > 2T$.

For $t, t' \in [0,T]$,
suppose that $|t - t'| \leq R$ and $5\sqrt{R} \sqrt{|t - t'|}$ is smaller than $T/2$.
For a point $p \in B_{x_0}(T)$,
 let $q_t$ be the footpoint of $p$ in $Y_t$ and $q_{t'}$ be the footpoint of $p$ in $Y_{t'}$.

Then by triangle inequality,
$$ d(p,q_t) + d(p,x_0) \geq d(x_0, q_t) \geq d(x_0,Y_t) > 2T.$$
Since $d(p,x_0) \leq T$, this implies $d(p,q_t) > T$. Similarly, we get $d(p,q_{t'}) > T$.

Also $d( p, q_t) \leq d(p,x_0) + d(x_0, Y_t)\leq T+R \leq 2R$.
Then from Proposition \ref{ppp2},
\begin{equation}\label{eee1}
\begin{split}
d( q_t,  q_{t'}) &\leq \bigl(\sqrt{|t - t'|} + 2 \sqrt{ d(p,q_t)+ |t-t'| } \;\bigr)\sqrt{|t-t'|}  \\
&\leq  (\sqrt{R} + 2 \sqrt{ 2R + R  } ) \sqrt{|t-t'| }  \\
&<   5\sqrt{R} \sqrt{|t - t'|}.
\end{split}
\end{equation}
Thus $d( q_t, q_{t'}) \leq T/2$.


If we consider a comparison triangle $\triangle \widetilde{q}_t \widetilde{p}  \widetilde{q}_{t'}$, since $d( \widetilde{p}, \widetilde{q}_t) > T$, $$\sin \angle \widetilde{q}_t \widetilde{p} \widetilde{q}_{t'} \leq d( \widetilde{q}_t, \widetilde{q}_{t'}) / T.$$
Since $d(\widetilde{q}_t, \widetilde{q}_{t'}) / T = d( q_t, q_{t'}) / T \leq 1/2$,
this implies
\begin{equation}\label{eee2}
\angle \widetilde{q}_t \widetilde{p} \widetilde{q}_{t'} \leq {2 d( \widetilde{q}_t, \widetilde{q}_{t'} ) \over  \sqrt{3}T}
\end{equation} because $\sin^{-1}$ has Lipschitz constant $2/\sqrt{3}$ on $[0,1/2]$.

By Equations \eqref{eee1} and \eqref{eee2}, we have  $$\angle \widetilde{q}_t \widetilde{p} \widetilde{q}_{t'}  \leq { 10 \sqrt{R} \over  \sqrt{3} C } \sqrt{|t-t'|  }.$$

Since $|| \nabla_{p} (-F_t) || = 1$, it implies $$ \rho( \nabla_{p} (-F_t) , \nabla_{p} (-F_{t'}) )  = \angle q_t p q_{t'} \leq  { 10 \sqrt{R} \over  \sqrt{3} C } \sqrt{|t-t'|  }. $$

By Theorem \ref{g7} with $\alpha=1/2$, $B = { 10 \sqrt{R} \over  \sqrt{3} C }$, $B_0 = \min \{ R , T^2 / 100 R \}$ and $X=B_{x_0}(T)$, we have a time-dependent gradient curve starting at $x_0$.
\end{proof}

\begin{theorem}
Let $Y_t$ be a curve of closed convex sets in $X$ such that $t \in [0, \infty)$ and $d_H(Y_t,Y_{t'}) \leq |t - t'|$. Then for any $x_0 \in X$,
 there is a unique time-dependent gradient curve $\sigma_{x_0}$
of the function $F$ where $F(t,p):=d_{Y_t}(p)$ such that $\sigma_{x_0}(0)= x_0$.
When it meets the curve of closed convex sets in $X$ at time $t_1$, that is, $d(Y_{t_1},\sigma_{x_0}(t_1))=0$, it will stop.
\end{theorem}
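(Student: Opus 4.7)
The plan is to patch together the local solutions from Theorem~\ref{ppp3} into a maximal time-dependent gradient curve, and to argue that the extension can fail only when the curve actually touches $Y_t$.

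If $x_0 \in Y_0$, then $d(\sigma_{x_0}(0), Y_0) = 0$, the curve stops at $t_1 = 0$, and there is nothing to prove. Otherwise $D_0 := d_{Y_0}(x_0) > 0$, and Proposition~\ref{ppp2} shows $t \mapsto d_{Y_t}(x_0)$ is $1$-Lipschitz; choosing $T_0 \in (0, D_0/3)$ therefore yields $\min_{t \in [0,T_0]} d_{Y_t}(x_0) \ge D_0 - T_0 > 2T_0$, and Theorem~\ref{ppp3} delivers a unique simple pursuit curve on $[0, T_0]$.

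Next I would set
$$
T^* := \sup\bigl\{\,T > 0 : \text{a time-dependent gradient curve of } F \text{ on } [0,T] \text{ starting at } x_0 \text{ exists}\,\bigr\} \ge T_0 > 0,
$$
and use Corollary~\ref{g8} (with $\lambda = 0$, valid since $F_t$ is convex by Proposition~\ref{de2}) to force all such local solutions to agree on their common domain, patching them into a well-defined $\sigma_{x_0} : [0, T^*) \to X$. Since $F_t$ is $1$-Lipschitz, $\|\nabla_x(-F_t)\| \le 1$, so $\sigma_{x_0}$ is $1$-Lipschitz and extends continuously to $T^*$ when $T^* < \infty$. The dichotomy I then need is that either $T^* = \infty$, or $\sigma_{x_0}(T^*) \in Y_{T^*}$.

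Assume the latter fails at some finite $T^*$, so $D^* := d_{Y_{T^*}}(\sigma_{x_0}(T^*)) > 0$; choosing $T' \in (0, D^*/3)$, Proposition~\ref{ppp2} yields $\min_{t \in [T^*, T^* + T']} d_{Y_t}(\sigma_{x_0}(T^*)) \ge D^* - T' > 2T'$. Applying Theorem~\ref{ppp3} with initial data $(T^*, \sigma_{x_0}(T^*))$ (the argument is invariant under shifts of the time origin) produces a time-dependent gradient curve $\tau : [T^*, T^* + T'] \to X$, and concatenating with $\sigma_{x_0}$ at $T^*$ gives a continuous curve on $[0, T^* + T']$ satisfying the right-side tangent condition of Definition~\ref{de99} on $[0, T^* + T')$ (supplied piecewise by $\sigma_{x_0}$ and $\tau$). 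This contradicts the maximality of $T^*$. Setting $t_1 := T^*$ in the finite case and $t_1 := \infty$ otherwise, uniqueness on $[0, t_1]$ follows immediately from Corollary~\ref{g8} on compact sub-intervals. The main subtlety is the concatenation step: one must check that the glued object really is a time-dependent gradient curve at the seam $T^*$, but since Definition~\ref{de99} only imposes the right-side tangent condition on $[t_0, T)$, this is automatic from the definition of $\tau$.
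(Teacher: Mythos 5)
Your proposal is correct and follows essentially the same strategy as the paper: iterate the local existence result of Theorem~\ref{ppp3} away from $Y_t$ and use the contraction estimate (Corollary~\ref{g8}) for uniqueness. Your maximal-interval formulation --- taking the supremum $T^*$, extending to the closed endpoint via the $1$-Lipschitz speed bound, and deriving a contradiction if $d_{Y_{T^*}}(\sigma_{x_0}(T^*))>0$ --- is in fact a more careful treatment of the limiting step than the paper's explicit step-by-step iteration, which does not explicitly rule out the possibility that the extension times $T_{x_i}$ accumulate at a finite time before the curve reaches $Y_t$.
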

\begin{proof}

For the initial point $x_0$, let $f(s) = \min_{t \in [0,s ]} d_{Y_t}(x_0)$.
If there is a constant $T_{x_0}>0$ such that $\min_{t \in [0,T_{x_0} ]} d_{Y_t}(x_0) > 2 T_{x_0}$, by Theorem \ref{ppp3},
then inside the closed ball $B_{x_0}(T_{x_0})$, there is a unique time-dependent gradient curve $\sigma_{x_0}:[0,T_{x_0}] \to X$
of the function $F$ where $F(t,p):=d_{Y_t}(p)$ such that $\sigma_{x_0}(0)= x_0$.
If there is no such constant $T_{x_0}$, it means that $d(x_0,Y_0) = 0$ since $f(s_1) \geq f(s_2)$ when $0 \leq s_1 \leq s_2$ and $f$ is continuous.
When it arrives at $x_1=\sigma_{x_0}(T_{x_0})$, we find a constant $T_{x_1}$ again.
If there is a constant $T_{x_1}$ for $x_1$, by Theorem \ref{ppp3}, we can extend our gradient curve
at $x_1$ or it stops at $x_1$.
This means that we will have an interval $[0,t_1]$ such that the time-dependent gradient curve $\sigma_{x_0}$ exists on $[0,t_1]$ and $d( \sigma_{x_0}(t_1) , Y_{t_1})=0$ or $\sigma_{x_0}$ exists on infinite time.
\end{proof}

For $Y_t$, if $Y_t$ is just a point of $X$, then we have the following corollary.
\begin{cor}\label{c1}
Let $Y_t$ be a curve of points with speed $\leq 1$. Then for any $x_0 \in X$, there is a unique simple pursuit curve $\sigma_{x_0}$ chasing the curve $Y=Y(t)=Y_t$ such that $\sigma_{x_0}(0)=x_0$.
\end{cor}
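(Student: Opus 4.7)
The plan is to deduce this directly from the preceding theorem by taking the curve of closed convex sets to consist of the singletons $Y_t := \{Y(t)\}$. A single point is trivially closed and convex in any geodesic space, so the only hypothesis to verify is the Lipschitz Hausdorff condition $d_H(Y_t, Y_{t'}) \leq |t-t'|$. Since the Hausdorff distance between singletons equals the distance between the points, and since by assumption $Y$ has speed at most $1$, we have
\[
d_H(\{Y(t)\}, \{Y(t')\}) = d(Y(t), Y(t')) \leq |t-t'|,
\]
so the hypothesis holds.

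With this in place, the function $F(t,p) := d_{Y_t}(p) = d(p, Y(t))$ matches the one in the preceding theorem. Applying that theorem yields a unique time-dependent gradient curve $\sigma_{x_0}$ of $F$ with $\sigma_{x_0}(0) = x_0$, defined either on $[0,\infty)$ or on a maximal interval $[0,t_1]$ at whose endpoint the pursuer meets the evader, i.e. $d(\sigma_{x_0}(t_1), Y(t_1)) = 0$. By definition this is precisely a simple pursuit curve chasing $Y$.

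There is no real obstacle here; the content of the corollary is entirely absorbed by the previous theorem once one observes that singletons are closed convex subsets and that the speed bound on $Y$ translates verbatim into the Hausdorff-Lipschitz condition. The only minor point worth flagging is that the uniqueness statement follows from Corollary~\ref{g8} applied inside the preceding theorem, which guarantees that any two time-dependent gradient curves of $F$ starting at the same point coincide.
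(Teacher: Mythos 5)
Your proposal is correct and follows exactly the route the paper intends: the corollary is presented as an immediate specialization of the preceding theorem to singleton sets, and your verification that singletons are closed convex subsets with $d_H(\{Y(t)\},\{Y(t')\}) = d(Y(t),Y(t')) \leq |t-t'|$ is precisely what is needed. Nothing further is required.
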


In \cite{j2}, we study continuous pursuit curves chasing a moving point on \catk\ spaces and geometrically show existence and uniqueness of continuous pursuit curves on \catk\ spaces. Additionally, we get regularity of continuous pursuit curves which is a replacement for $C^{1,1}$ regularity in smooth spaces.

\subsection{Simple pursuit curve chasing barycenters}
Suppose that $X$ is a \catzero\ space.
When we deal with multiple evaders in pursuit-evasion games, we still may find a strategy to get a continuous pursuit curve.
For this, we consider the \emph{barycenter} of multiple points.

Since for points $x_1, \cdots, x_n \in X$, the function $x \mapsto \sum_{i=1}^n d^2( x , x_i)$ is strictly convex, there exists a unique minimum point of this function.   
\begin{defn}\cite[Def. 2.3]{j} Given $n$ points $x_i$ on $X$, the \emph{barycenter} $b$ of the $x_i$'s is defined to be the minimum point of the function $x \mapsto \sum_{i=1}^n d^2( x , x_i)$.
\end{defn}

Let $\mathcal{P}(X)$ denote the set of all probability measures $\nu$ on $(X,\mathcal{B}(X))$ with separable support supp($\nu$) $\subset X$ where $\mathcal{B}(X)$ is the set of Borel sets of $X$.

More generally,
\begin{defn}\cite[Prop. 4.3]{s2}
For $\nu \in \mathcal{P}(X)$ such that $\int_X d^2(x,y) \nu(dy) < \infty$ for some (hence all) $x \in X$, the minimum point of the function $x \mapsto \int_X d^2(x,y) \nu(dy)$ is called the \emph{barycenter} $b(\nu)$ of $\nu$.
\end{defn}
Since the function $x \mapsto \int_X d^2(x,y) \nu(dy)$ is continuous and strictly convex, the barycenter of $\nu$ is well-defined.

Let $\delta_x$ be the \emph{Dirac measure} of $x$ given by $\delta_x(A)=1$ if $x \in A$ or $\delta_x(A)=0$ otherwise, for any subset $A$ of $X$.
\begin{lemma}\label{bb1}
Set $\nu = {1 \over n} \sum_{i=1}^n \delta_{x_i}$. Then the barycenter $b(\nu)$ of $\nu$ is equal to the barycenter of the $x_i$'s.
\end{lemma}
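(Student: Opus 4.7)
The plan is to show both barycenters coincide by reducing the integral appearing in the definition of $b(\nu)$ to the finite sum appearing in the definition of the barycenter of the $x_i$'s. The only tool needed is the evaluation property of the Dirac measure, $\int_X f(y)\,\delta_{x_i}(dy)=f(x_i)$ for measurable $f$, combined with linearity of the integral.

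Concretely, for every $x\in X$, I would compute
\begin{equation*}
\int_X d^2(x,y)\,\nu(dy) \;=\; \frac{1}{n}\sum_{i=1}^n \int_X d^2(x,y)\,\delta_{x_i}(dy) \;=\; \frac{1}{n}\sum_{i=1}^n d^2(x,x_i).
\end{equation*}
In particular the integrability hypothesis $\int_X d^2(x,y)\,\nu(dy)<\infty$ in the definition of $b(\nu)$ is automatic, since the integral is a finite sum, and the support of $\nu$ is the finite (hence separable) set $\{x_1,\dots,x_n\}$, so $\nu\in\mathcal{P}(X)$.

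Multiplication by the positive constant $1/n$ does not change the set of minimizers, so the unique minimizer of $x\mapsto \int_X d^2(x,y)\,\nu(dy)$ is the same as the unique minimizer of $x\mapsto\sum_{i=1}^n d^2(x,x_i)$. By the two definitions recalled just before the lemma, these are respectively $b(\nu)$ and the barycenter of the $x_i$'s, so they agree. There is no real obstacle here; the lemma is essentially a compatibility check between the measure-theoretic and the finite-sum definitions of barycenter, and the only thing to be careful about is verifying the hypotheses of the measure-theoretic definition for the empirical measure $\nu$.
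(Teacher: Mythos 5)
Your proposal is correct and follows essentially the same route as the paper: evaluate $\int_X d^2(x,y)\,\nu(dy)$ as $\tfrac{1}{n}\sum_{i=1}^n d^2(x,x_i)$ via the Dirac measures and observe that the positive factor $\tfrac{1}{n}$ does not change the minimizer. Your additional remarks verifying the integrability and separable-support hypotheses are a harmless (and slightly more careful) elaboration of what the paper leaves implicit.
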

\begin{proof}
Since $\nu = {1 \over n} \sum_{i=1}^n \delta_{x_i}$,
$$\int_X d^2(x,y) \nu(dy) = {1 \over n}\sum_{i=1}^n d^2( x , x_i).$$ Then the minimum point of the function $x \mapsto \sum_{i=1}^n d^2( x , x_i)$ is equal to the barycenter of $\nu$.
\end{proof}

Here we want to give a strategy for chasing multiple evaders. Given $n$ evaders $E_i=E_i(t)$ with speed $\leq 1$ in $X$, we have the barycenter curve $b=b(t)$ defined by $b(t):=$ the barycenter of $E_i(t)$. Then we want to show that $b=b(t)$ has also speed $\leq 1$.

In order to show this, we need a theorem to deal with the distance between $b(t)$ and $b(t')$. From \cite{s2}, we have the following theorem.
This theorem gives an upper bound of the distance between two barycenters by integrating a coupling.
Given two probability measures $\nu_1, \nu_2 \in \mathcal{P}(X)$, we call $\widetilde{\nu} \in \mathcal{P}(X^2)$ a \emph{coupling} of $\nu_1$ and $\nu_2$ if $\widetilde{\nu}(A \times X) = \nu_1(A)$ and $\widetilde{\nu}(X \times A) = \nu_2(A)$ for $\forall A \in \mathcal{B}(X)$.

\begin{theorem}\cite[Th. 6.3]{s2}\label{bb2}
Let $X$ be a \catzero\ space.
If two probability measures $\nu_1$ and $\nu_2$ satisfy $\int_X d(x_0,y) \nu_i(dy) < \infty$ for some $x_0 \in X$ and  $\widetilde{\nu}$ is a coupling of $\nu_1$ and $\nu_2$, then
$$ d( b( \nu_1), b( \nu_2) ) \leq \int_{X^2} d(x,y) \widetilde{\nu}(dx dy).$$
\end{theorem}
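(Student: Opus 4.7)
The plan is to combine the variance inequality for barycenters with a quadruple comparison estimate in \catzero\ geometry. Write $b_i := b(\nu_i)$ and $F_i(z) := \int_X d^2(z, y)\,\nu_i(dy)$ for $i = 1, 2$. By Proposition \ref{g1} the functions $z \mapsto d^2(z, y)$ are $2$-convex, so integration makes each $F_i$ $2$-convex (and strictly convex); the assumed first-moment bound, via the triangle inequality, implies finiteness of $F_i$ everywhere, so $b_i$ is its unique minimizer.

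First I would establish the \emph{variance inequality} $F_i(z) \ge F_i(b_i) + d^2(b_i, z)$ for every $z \in X$. Applying $2$-convexity of $F_i$ along the geodesic $\gamma_t$ from $b_i$ to $z$ gives $F_i(\gamma_t) \le (1 - t) F_i(b_i) + t F_i(z) - t(1 - t) d^2(b_i, z)$; combining with the minimality bound $F_i(\gamma_t) \ge F_i(b_i)$, dividing by $t > 0$ and sending $t \to 0^+$ yields the inequality. Applying it at $z = b_2$ for $\nu_1$ and at $z = b_1$ for $\nu_2$ and summing,
\[
2\, d^2(b_1, b_2) \le [F_1(b_2) - F_1(b_1)] + [F_2(b_1) - F_2(b_2)].
\]
Since $\widetilde{\nu}$ has marginals $\nu_1$ and $\nu_2$, the right-hand side equals
\[
\int_{X^2} \bigl\{ [d^2(b_2, x) + d^2(b_1, y)] - [d^2(b_1, x) + d^2(b_2, y)] \bigr\}\, \widetilde{\nu}(dx\,dy).
\]

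Next I would invoke the \catzero\ \emph{quadruple comparison}: for any $p, q, x, y \in X$,
\[
d^2(q, x) + d^2(p, y) \le d^2(p, x) + d^2(q, y) + 2\, d(p, q)\, d(x, y).
\]
In Euclidean space this reduces to $2\langle q - p,\, y - x \rangle \le 2\|q - p\|\|y - x\|$, i.e.\ Cauchy--Schwarz. Substituting $(p, q) = (b_1, b_2)$ inside the previous integral bounds the integrand pointwise by $2\, d(b_1, b_2)\, d(x, y)$, so
\[
2\, d^2(b_1, b_2) \le 2\, d(b_1, b_2) \int_{X^2} d(x, y)\, \widetilde{\nu}(dx\,dy).
\]
Dividing by $2\, d(b_1, b_2)$ when it is positive (and observing triviality otherwise) yields the claim.

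The main obstacle is justifying the quadruple comparison in the \catzero\ setting. It is a classical but nontrivial consequence of the CAT(0) condition due to Reshetnyak, typically proved via his majorization theorem: any closed curve in a \catzero\ space admits a distance-nonincreasing majorization by a convex planar domain, which reduces the inequality to its Euclidean form. An alternative elementary route is to differentiate along the geodesics $[p, q]$ and $[x, y]$ using the first variation formula (as recorded in the excerpt) together with the $2$-convexity of squared distance, but the Reshetnyak majorization gives the cleanest derivation.
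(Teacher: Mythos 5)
This theorem is imported from Sturm \cite[Th.\ 6.3]{s2}; the paper gives no proof of its own, so there is nothing internal to compare against. Your proposal is in fact a reconstruction of Sturm's original argument: variance inequality at each barycenter, summation, rewriting via the coupling, and the \catzero\ quadruple (four-point) comparison. The variance inequality derivation, the marginal bookkeeping, and the final division by $2\,d(b_1,b_2)$ are all correct, and you rightly identify the quadruple inequality as the one nontrivial external ingredient (it is Reshetnyak-type and is proved in \cite{s2} itself; leaving it as a cited lemma is reasonable, though as written your proof is not self-contained at exactly the point you flag as the main obstacle).

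The one genuine error is the finiteness claim: a bound on $\int_X d(x_0,y)\,\nu_i(dy)$ does \emph{not} imply $F_i(z)=\int_X d^2(z,y)\,\nu_i(dy)<\infty$, since $d^2(z,y)\le d^2(z,x_0)+2\,d(z,x_0)\,d(x_0,y)+d^2(x_0,y)$ and the last term needs a second moment. Under the stated first-moment hypothesis $F_i$ may be identically $+\infty$, and then your differences $F_1(b_2)-F_1(b_1)$ and the splitting of $\int\{A-B\}\,d\widetilde{\nu}$ into separate integrals are meaningless. The standard repair (used by Sturm) is to replace $F_i$ by the renormalized functional $z\mapsto\int_X\bigl[d^2(z,y)-d^2(x_0,y)\bigr]\nu_i(dy)$, which is finite, $2$-convex, and has the same minimizer; all the differences you form are then exactly of this integrable type, and the rest of your argument goes through verbatim. (Note also that this paper's own definition of the barycenter assumes finite second moments, and its only application is to finitely supported measures, where the issue is vacuous --- but your proof as written asserts a false implication and should be patched.)
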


\begin{prop}\label{bb3}
Let $X$ be a \catzero\ space. Given $n$ evaders $E_i=E_i(t)$ with speed $\leq 1$ in $X$,
then the barycenter curve $b=b(t)$ is 1-Lipschitz.
\end{prop}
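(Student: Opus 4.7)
The plan is to reduce the statement to Theorem \ref{bb2} by representing each time-$t$ configuration of evaders as a uniform probability measure on their positions, and then choosing the obvious ``diagonal'' coupling that pairs $E_i(t)$ with $E_i(t')$.

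Concretely, for each $t$ set $\nu_t := \tfrac{1}{n}\sum_{i=1}^n \delta_{E_i(t)} \in \mathcal{P}(X)$. By Lemma \ref{bb1}, $b(\nu_t) = b(t)$, and the finite support of $\nu_t$ makes the integrability hypothesis $\int_X d(x_0, y)\, \nu_t(dy) < \infty$ automatic. For fixed $t, t'$ I would define the coupling
\[
\widetilde{\nu} := \frac{1}{n}\sum_{i=1}^n \delta_{(E_i(t), E_i(t'))} \in \mathcal{P}(X^2),
\]
and check that its marginals are $\nu_t$ and $\nu_{t'}$ respectively, which is immediate from the definition of a Dirac measure on a product.

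Applying Theorem \ref{bb2} to this coupling gives
\[
d(b(t), b(t')) \;\leq\; \int_{X^2} d(x,y)\, \widetilde{\nu}(dx\, dy) \;=\; \frac{1}{n}\sum_{i=1}^n d(E_i(t), E_i(t')).
\]
Since each evader $E_i$ has speed at most $1$, each term $d(E_i(t), E_i(t'))$ is bounded by $|t - t'|$, so the right-hand side is $\leq |t - t'|$. This yields $d(b(t), b(t')) \leq |t - t'|$, i.e.\ the 1-Lipschitz property.

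There is no real obstacle here; the argument is essentially a direct application of Theorem \ref{bb2} combined with Lemma \ref{bb1}. The only thing worth being careful about is the verification that the diagonal measure is a genuine coupling of $\nu_t$ and $\nu_{t'}$, which is a one-line check once the definitions are unwound.
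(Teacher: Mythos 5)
Your proposal is correct and follows exactly the paper's own argument: the same diagonal coupling $\widetilde{\nu}=\tfrac{1}{n}\sum_i \delta_{(E_i(t),E_i(t'))}$, the same use of Lemma \ref{bb1} to identify $b(t)$ with $b(\nu_t)$, and the same application of Theorem \ref{bb2} followed by the speed bound on each evader. Nothing is missing.
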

\begin{proof}
Let $\widetilde{\nu}$ be ${1 \over n} \sum^n_{i=1} \delta_{(E_i(t) , E_i(t') )}$. Then $\widetilde{\nu}$ is the coupling of $\nu_t$ and $\nu_{t'}$ where $\nu_t = {1 \over n} \sum_{i=1}^n \delta_{E_i(t)}$ and $\nu_{t'} = {1 \over n} \sum_{i=1}^n \delta_{E_i(t')}$.

Obviously, $\nu_i$ satisfies $\int_X d(x_0,y) \nu_i(dy) < \infty$ for some $x_0 \in X$.

Since $b(t)=b(\nu_t)$ and $b(t')=b(\nu_{t'})$ from Lemma \ref{bb1}, by applying $\nu_t$, $\nu_{t'}$ and $\widetilde{\nu}$ to Theorem \ref{bb2}, we have
$$ d(b(t),b(t')) \leq {1 \over n} \sum^n_{i=1} d(E_i(t) , E_i(t') ).$$
Since each evader $E_i$ has speed $\leq 1$, $d(E_i(t) , E_i(t') ) \leq |t - t'|$. Then we have $$d(b(t),b(t')) \leq |t-t'|.$$
\end{proof}
This proof shows that the barycenter curve of curves with speed $\leq 1$ has also speed $\leq 1$.
By letting $Y$ of Corollary \ref{c1} be a barycenter curve $b$, we obtain
\begin{theorem}
Let $X$ be a \catzero\ space. Given $n$ evaders $E_i=E_i(t)$ with speed $\leq 1$ in $X$, there is a unique continuous pursuit curve $P=P(t)$ chasing the barycenter curve $b$ of evaders.
\end{theorem}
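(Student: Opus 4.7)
The plan is to reduce this theorem directly to the single-evader case by treating the barycenter curve $b(t)$ as a new ``evader'' and invoking Corollary \ref{c1}. The hypotheses of Corollary \ref{c1} require a curve $Y(t)$ of points with speed at most $1$; the substantive content I need is exactly that $b(t)$ has this property.

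First, I would verify that $b(t)$ is a well-defined curve: for each $t$, the barycenter of the $n$ points $E_1(t),\ldots,E_n(t)$ exists and is unique because the function $x \mapsto \sum_{i=1}^n d^2(x,E_i(t))$ is strictly convex on the \catzero\ space $X$ (this is the content of the definition of barycenter given just before Lemma \ref{bb1}). Next, I would apply Proposition \ref{bb3} directly to the $n$ evaders $E_1,\ldots,E_n$: since each $E_i$ has speed $\leq 1$, the barycenter curve $b$ satisfies $d(b(t),b(t'))\leq |t-t'|$, i.e.\ $b$ is $1$-Lipschitz and hence has speed $\leq 1$.

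Finally, I would apply Corollary \ref{c1} to the curve $Y(t) := b(t)$ with any fixed initial pursuer position. This yields a unique simple pursuit curve $P = \sigma_{x_0}$ chasing $b$, which by definition is the unique time-dependent gradient curve of the function $F(t,p) := d(p, b(t)) = \mathrm{dist}_{b(t)}(p)$ starting at $x_0$. This $P(t)$ is the desired continuous pursuit curve.

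The proof is essentially a one-line composition of two already-established results, so there is no real obstacle — the conceptual work was done in Proposition \ref{bb3} (via the Sturm coupling inequality, Theorem \ref{bb2}) and in Corollary \ref{c1} (which itself rests on Theorem \ref{ppp3} and ultimately on the main existence result, Theorem \ref{g7}). The only thing worth flagging is that uniqueness of $P$ follows from uniqueness in Corollary \ref{c1} (equivalently, from Corollary \ref{g8}), since any two continuous pursuit curves chasing the same barycenter curve from the same initial point would both be time-dependent gradient curves of the same function $F(t,p)=d_{b(t)}(p)$.
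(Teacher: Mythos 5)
Your proposal is correct and matches the paper's proof exactly: the paper also applies Proposition \ref{bb3} to conclude that $b$ has speed $\leq 1$ and then takes $b$ as the curve $Y$ in Corollary \ref{c1}. Your additional remarks on well-definedness of the barycenter and on where uniqueness comes from are consistent elaborations of the same argument.
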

\begin{proof}
Since $b$ has speed $\leq 1$ by Proposition \ref{bb3}, we take the barycenter curve $b$ as $Y$ of Corollary \ref{c1}.
\end{proof}

\section{Lytchak's gradient curves}
In this section we consider time independent $F$ and two definitions of gradient curves.  One is Lytchak's definition (see \cite{l}) and the other is the definition we have been using.  We first introduce Lytchak's definition which applies in a very general setting.  We then see that Mayer gets Lytchak's gradient curves. We then show that in our setting Lytchak's gradient curves are in fact gradient curves in the sense we have used them.

Let us start by defining an \emph{absolute gradient} of $F$ at $x$ from \cite{m} and \cite{p2} for the downward case.
\begin{defn}\label{aa1}
Let $X$ be a \catzero\ space.
For a function $F:X \to \mathbb{R}$ and $x \in X$, define the \emph{absolute gradient} $|\nabla_{-}F|(x)$ of $F$ at $x$ by $$|\nabla_{-}F|(x) := \max \Bigl\{ \limsup_{ y \to x} \frac{ F(x) - F(y)}{d(x,y)} ,0 \Bigr\}. $$
\end{defn}
The following condition is sufficient for the set $\{ x \in X : |\nabla_{-}F|(x) \ne 0 \}$ of non-critical points to be open:
\begin{defn}\cite{l},\cite{p2}
Let $X$ be a \catzero\ space.
For a function $F:X \to \mathbb{R}$, $F$ has \emph{semi-continuous absolute gradients} if $\liminf_{ y \to x}|\nabla_{-}F|(y) \geq |\nabla_{-}F|(x)$ for all $x \in X$.
\end{defn}

By Definition \ref{aa1}, we know:
\begin{lemma}
If $F$ is locally Lipschitz and $\lambda$-convex on a \catzero\ space $X$, then $|| \nabla_x (-F) || = |\nabla_{-}F|(x)$ for any $x \in X$.
\end{lemma}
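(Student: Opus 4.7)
The plan is to prove the two inequalities $|\nabla_{-}F|(x)\le\|\nabla_x(-F)\|$ and $|\nabla_{-}F|(x)\ge\|\nabla_x(-F)\|$ separately, combining the explicit construction of $\nabla_x(-F)$ in the proof of Theorem \ref{g11} with $\lambda$-convexity of $F$ along geodesics. The construction there shows that
$$\|\nabla_x(-F)\| \;=\; \max\Bigl\{-\inf_{\eta\in\Sigma_x}(d_xF)(\eta),\,0\Bigr\},$$
since $\nabla_x(-F)=o_x$ exactly when $d_xF\ge 0$ on $\Sigma_x$, and otherwise $\nabla_x(-F)=(\xi,|r|)$ with $r=\min_{\eta\in\Sigma_x}(d_xF)(\eta)<0$ and $\|\nabla_x(-F)\|=-r$. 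This identification is the bridge between the two sides.

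For the bound $|\nabla_{-}F|(x)\le\|\nabla_x(-F)\|$, I fix $y$ near $x$, let $\gamma=[xy]$ be unit-speed, and let $\xi_y\in\Sigma_x$ be its initial direction. Applying $\lambda$-convexity of $F$ along $\gamma$ between $x$, an intermediate point $\gamma(u)$, and $y=\gamma(d(x,y))$, then dividing by $u$ and letting $u\to 0^+$, one gets $F(y)-F(x)\ge d(x,y)(d_xF)(\xi_y)+\tfrac{\lambda}{2}d(x,y)^2$. Rearranging and using $-(d_xF)(\xi_y)\le\|\nabla_x(-F)\|$, which is immediate either from the identification above or directly from property (1) of Definition \ref{gg2}, yields
$$\frac{F(x)-F(y)}{d(x,y)}\;\le\;\|\nabla_x(-F)\|-\tfrac{\lambda}{2}\,d(x,y).$$
Taking $\limsup_{y\to x}$ and then $\max\{\,\cdot\,,0\}$ gives $|\nabla_{-}F|(x)\le\|\nabla_x(-F)\|$.

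The reverse inequality is trivial when $\|\nabla_x(-F)\|=0$. Otherwise let $\xi\in\Sigma_x$ be the minimizing direction with $(d_xF)(\xi)=-\|\nabla_x(-F)\|$. Since $\Sigma_x$ is by construction the completion of the set of directions of actual geodesics through $x$, one can choose a sequence $\xi_n$ of such geodesic directions with $\xi_n\to\xi$; Kleiner's theorem (cited just before Definition \ref{gg2}) that $d_xF$ is Lipschitz then gives $(d_xF)(\xi_n)\to-\|\nabla_x(-F)\|$. For each $n$ I pick a geodesic $\gamma_n$ with direction $\xi_n$, so that $\lim_{t\to 0^+}\frac{F(x)-F(\gamma_n(t))}{t}=-(d_xF)(\xi_n)$, and a diagonal choice $y_m=\gamma_{n(m)}(t_m)\to x$ realizes $\frac{F(x)-F(y_m)}{d(x,y_m)}\to\|\nabla_x(-F)\|$, proving $|\nabla_{-}F|(x)\ge\|\nabla_x(-F)\|$. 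The only nontrivial point in the whole argument, and the one place where Lipschitz continuity of $d_xF$ is genuinely used, is precisely this last step: the minimizing direction $\xi$ need not come from an actual geodesic, so one cannot directly test $\frac{F(x)-F(y)}{d(x,y)}$ along a geodesic in direction $\xi$ and must approximate.
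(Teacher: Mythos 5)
Your proof is correct. Note that the paper itself offers no argument here at all -- the lemma is prefaced only by ``By Definition \ref{aa1}, we know:'' -- so there is nothing to compare against; what you have written is a complete justification of a claim the paper treats as immediate. Your two halves are both sound: the upper bound $|\nabla_{-}F|(x)\le\|\nabla_x(-F)\|$ follows exactly as you say from $\lambda$-convexity along $[xy]$ (the error term $-\tfrac{\lambda}{2}d(x,y)$ vanishes in the $\limsup$) together with the support inequality $(d_xF)(\xi_y)\ge-\langle\nabla_x(-F),\xi_y\rangle\ge-\|\nabla_x(-F)\|$; and you are right that the only delicate point is the reverse inequality, since the minimizing direction $\xi$ produced in the proof of Theorem \ref{g11} lives in the completion $\Sigma_x$ and need not be the direction of an actual geodesic, so one must approximate by geodesic directions $\xi_n$ and use the Lipschitz continuity of $d_xF$ on $T_x$ (Kleiner) before running the diagonal argument. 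This is a genuine gap-filler for the paper rather than a rederivation of its proof.
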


Now, we can give the definition of \emph{gradient curves} on metric spaces.
\begin{defn}\label{g4}\cite{l}
Let $X$ be a \catzero\ space.
For a function $F:X \to \mathbb{R}$ having semi-continuous absolute gradients, a curve $m:[0,a) \to X$ is called the \emph{(time-independent) gradient curve} of $F$ if for all $t \in [0,a)$,
\begin{equation}\label{e21}
\lim_{\epsilon \to 0+} \frac{ d(m(t+ \epsilon ),m(t) )}{ \epsilon} = |\nabla_{-}F|(m(t))
\end{equation}
and
\begin{equation}\label{e22}
  \lim_{\epsilon \to 0+} \frac{ F\circ m(t+\epsilon )-F \circ m(t) }{ \epsilon}= - \bigl(|\nabla_{-}F|(m(t))\bigr)^2.
\end{equation}
\end{defn}

Next we look at Mayer's Theorem from \cite{m}:
\begin{theorem}\label{g66}\cite[Th. 1.13, Th. 2.17, Cor 2.18 and Prop. 2.25]{m}
Let $X$ be a \catzero\ space.
For $x_0 \in X$ and a function $G:X \to \mathbb{R}$, assume that \\
1) $G$ is lower semicontinuous, \\
2) $G$ is $\lambda$-convex. \\
For any $y \in X$, let
\begin{equation}\label{e2222222222222}
\begin{split}
A &= - \min \{0, \liminf_{ d(x,y) \to \infty}  \frac{G(x)}{d^2(x,y)}  \}, \\
I_A &=  \left\{ \begin{array}{ll}
 (0,\infty)& \mbox{ if $A=0$ },  \\
 (0,\frac{1}{16A}]& \mbox{ if $A>0$ }.
\end{array}
\right.
\end{split}
\end{equation}
Then there is a unique (time-dependent) gradient curve $m:I_A \to X$ of $G$ as in Definition \ref{g4} such that $\lim_{t \to 0} m(t) = x_0$ and $G(m(t)) \leq G(x_0)$.
\end{theorem}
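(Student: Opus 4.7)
The plan is to reconstruct Mayer's gradient curve by the variational discrete approximation scheme (Moreau--Yosida / Crandall--Liggett style) already hinted at in Section 4. Fix $x_0 \in X$ and an admissible $h > 0$ (with $h < -1/(2\lambda)$ if $\lambda < 0$). Consider the step-energy $E_{x,h}(y) := G(y) + \frac{1}{2h}d^2(y,x)$; this is $(\lambda + 1/h)$-convex and, under the lower semicontinuity hypothesis, it is bounded below on bounded sets by Lemma \ref{l111111} (and its sublevel sets are bounded and convex). Hence $E_{x,h}$ attains a unique minimum, which I call $J(x,h)$, by a lower-semicontinuous variant of Proposition \ref{g3}. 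The discrete flow $x \mapsto J(x,h)$ is then $(1+\lambda h)^{-1}$-Lipschitz by Lemma \ref{g6}.

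For $t \in I_A$ and $n \in \mathbb{N}$, I would iterate to produce $x_0^n = x_0$ and $x_k^n = J(x_{k-1}^n, t/n)$, $k = 1, \ldots, n$, and define $m(t) := \lim_{n \to \infty} x_n^n$. The core estimates I would need are: (i) an a priori bound on the total displacement of the discrete trajectory, using the hypothesis on $A$ to keep the iterates inside a definite ball (this is where $I_A$ enters, preventing the discrete flow from escaping to infinity when $G$ has negative quadratic growth); (ii) a monotonicity relation $G(x_k^n) \le G(x_{k-1}^n) - \frac{1}{2h} d^2(x_{k-1}^n, x_k^n)$ coming directly from the minimum property of $J$; and (iii) a Crandall--Liggett type comparison estimate between the $n$-step and the $m$-step iterates to show that $\{x_n^n\}$ is Cauchy. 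Step (iii) is the analytic heart of Mayer's work, and I would follow his argument rather than reinvent it.

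Once $m$ is constructed, I would verify Lytchak's two conditions \eqref{e21} and \eqref{e22}. For \eqref{e21}, I would pass to the limit the discrete speed bound $d(x_{k-1}^n, x_k^n)/(t/n) \to |\nabla_- G|(m(s))$ as $k/n \to s/t$; the fact that $|\nabla_- G|$ coincides with $\|\nabla_x(-G)\|$ (since $G$ here is lower semicontinuous and $\lambda$-convex, matching the absolute gradient formalism of Definition \ref{aa1}) makes this identification work. For \eqref{e22}, I would combine (ii) with the first-variation-style identity that the minimizing direction of $E_{x,h}$ points along $\nabla_x(-G)$ in the limit, yielding $\frac{d}{dt^+} G(m(t)) = -|\nabla_- G|(m(t))^2$. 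The boundary conditions $\lim_{t \to 0^+} m(t) = x_0$ and $G(m(t)) \le G(x_0)$ then fall out of the discrete construction.

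Uniqueness is the easiest part: given two gradient curves $m_1, m_2$ of $G$ with the same initial point, a first-variation argument combined with the gradient-comparison Lemma \ref{l222} gives $\frac{d}{dt^+} d(m_1(t), m_2(t)) \le -\lambda\, d(m_1(t), m_2(t))$, so $d(m_1(t), m_2(t)) \le e^{-\lambda t} d(m_1(0), m_2(0)) = 0$. The main obstacle, as noted, is step (iii)---the Cauchy estimate for the resolvent iterates---which in Mayer's treatment requires a delicate Crandall--Liggett type bound. This is precisely the difficulty the present paper circumvents in the time-dependent setting (see the introduction) by working instead with piecewise fixed-time gradient segments once the fixed-time theory is in hand.
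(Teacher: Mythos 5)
This statement is quoted verbatim from Mayer's paper; the present paper offers no proof of it, so there is nothing internal to compare your argument against --- the relevant question is whether your sketch would actually establish Mayer's theorem. As an outline of Mayer's construction it is faithful (resolvent iteration $x_k^n=J(x_{k-1}^n,t/n)$, the $(1+\lambda h)^{-1}$-Lipschitz bound, the energy monotonicity from the minimum property, the role of $A$ and $I_A$ in confining the iterates), but you explicitly defer the Crandall--Liggett Cauchy estimate --- the analytic heart --- to Mayer, so as a standalone proof the proposal is incomplete by its own admission.

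The genuine gap lies in the parts you do claim to carry out. Your uniqueness argument invokes Lemma \ref{l222} together with a first-variation computation, but both require downward gradient vectors $\nabla_x(-G)\in T_x$ and right-side tangent vectors of the two competing curves; these are only available (Theorem \ref{g11}) when $G$ is locally Lipschitz, whereas Theorem \ref{g66} assumes only lower semicontinuity. A curve satisfying the purely metric conditions \eqref{e21} and \eqref{e22} need not admit tangent vectors at all, so the differential inequality $\frac{d}{dt^+}\,d(m_1(t),m_2(t))\le -\lambda\, d(m_1(t),m_2(t))$ cannot be derived this way at this level of generality. The same issue affects your identification $|\nabla_-G|(x)=\|\nabla_x(-G)\|$, which the paper establishes only for locally Lipschitz $G$ --- indeed, this is precisely why Section 7 of the paper exists: to upgrade Mayer's metric gradient curves to tangent-vector gradient curves under the \emph{additional} hypothesis of local Lipschitzness. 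Mayer's own uniqueness proof is a purely metric contraction argument routed through the discrete scheme, not a tangent-cone computation, and that is the route you would need to follow here.
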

Note that $A$ is independent of $y$ because of the triangle inequality.

In Theorem \ref{g66}, it may not happen that the curve $m$ has right-side tangent vectors $m'(t+)$ for all $t \in I_A$. But if $G$ is not only lower semicontinuous but in fact locally Lipschitz, we show below that the curve $m:I_A \to X$ which we get by Theorem \ref{g66} is the gradient curve such that there exists its right-side tangent vector at every time and it must be equal to a gradient vector.
\begin{prop}
Let $X$ be a \catzero\ space and $G:X \to \mathbb{R}$ be locally Lipschitz and $\lambda$-convex.
Then for the curve $m:I_A \to X$ which we get by Theorem \ref{g66},
 there exists a right-side tangent vector $m'(t+)$ at $t$ and it is equal to $\nabla_{m(t)}(-G)$ for all $t \in I_A$.
\end{prop}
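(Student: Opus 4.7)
The plan is to show, for each $t \in I_A$, that the direction of the chord $[m(t), m(t+\epsilon)]$ at $m(t)$ converges as $\epsilon \to 0+$ to the direction of $\nabla_{m(t)}(-G)$; combined with Mayer's speed identity \eqref{e21} and the equality $|\nabla_- G|(x) = \|\nabla_x(-G)\|$ (available because $G$ is locally Lipschitz and $\lambda$-convex), this yields $m'(t+) = \nabla_{m(t)}(-G)$ as a tangent vector.

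Fix $t$, put $x_0 = m(t)$, $v = \nabla_{x_0}(-G)$, $r = \|v\|$. If $r = 0$ then \eqref{e21} gives speed zero at $t$, so the only candidate right-side tangent vector is the vertex $o_{x_0}$, which equals $v$ by the identification of all length-$0$ pairs. I therefore assume $r > 0$ and let $\xi_\epsilon \in \Sigma_{x_0}$ be the initial direction of the (unique, by \catzero) unit-speed geodesic $\gamma_\epsilon$ from $x_0$ to $m(t+\epsilon)$, with length $\ell_\epsilon = d(x_0, m(t+\epsilon))$.

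The key inequality comes from $\lambda$-convexity of $G$ along $\gamma_\epsilon$: the function $s \mapsto G(\gamma_\epsilon(s))$ is $\lambda$-convex on $[0,\ell_\epsilon]$, so
$$G(m(t+\epsilon)) \ge G(x_0) + \ell_\epsilon\, d_{x_0}G(\xi_\epsilon) + \tfrac{\lambda}{2}\ell_\epsilon^2.$$
Dividing by $\epsilon$ and using $\ell_\epsilon/\epsilon \to r$ from \eqref{e21}, $\ell_\epsilon^2/\epsilon \to 0$, and $(G(m(t+\epsilon)) - G(x_0))/\epsilon \to -r^2$ from \eqref{e22}, I obtain $\limsup_{\epsilon \to 0+} d_{x_0}G(\xi_\epsilon) \le -r$. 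On the other hand, property (1) in Definition \ref{gg2} together with $\|\xi_\epsilon\| = 1$ gives $d_{x_0}G(\xi_\epsilon) \ge -\langle v, \xi_\epsilon\rangle \ge -r$. Combining these, for any subsequential limit $\xi_\infty$ of $\xi_\epsilon$ in $\Sigma_{x_0}$ (using the $L$-Lipschitz continuity of $d_{x_0}G$ from Kleiner's lemma), both $d_{x_0}G(\xi_\infty) = -r$ and $\langle v, \xi_\infty\rangle = \|v\|$ hold, and the equality case of Cauchy--Schwarz forces $\xi_\infty = v/\|v\|$.

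The remaining issue, and in my view the main subtlety, is upgrading the subsequential statement to actual convergence $\xi_\epsilon \to v/\|v\|$ in $\Sigma_{x_0}$, since in general $\Sigma_{x_0}$ is not compact. The workaround is a direct contradiction argument: if convergence failed, some sequence $\epsilon_n \to 0$ would satisfy $\angle(\xi_{\epsilon_n}, v/\|v\|) \ge \delta > 0$; but the chain of inequalities above shows $\langle v, \xi_{\epsilon_n}\rangle \ge -d_{x_0}G(\xi_{\epsilon_n}) \ge r - o(1)$ as $n \to \infty$, hence $\cos\angle(v, \xi_{\epsilon_n}) \to 1$, contradicting the bound. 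Once the direction and the speed (from Mayer) of $m'(t+)$ are both pinned down, the identification $m'(t+) = (v/\|v\|, r) = v = \nabla_{m(t)}(-G)$ is immediate from the definition of the right-side tangent vector as a pair (direction, length) in $T_{x_0}$.
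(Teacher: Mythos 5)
Your proof is correct and follows essentially the same route as the paper's: both combine Mayer's two limit identities \eqref{e21}--\eqref{e22} with the support inequality $d_{x}G(w)\ge -\langle \nabla_x(-G),w\rangle$ from Definition \ref{gg2} to force $\cos\angle(\xi_\epsilon,\nabla_{x}(-G))\to 1$. The only difference is that you make explicit, via the $\lambda$-convexity inequality along the chord geodesics, the step where the paper passes directly from \eqref{e22} to $d_xG(w_i)\to -(|\nabla_-G|(x))^2$; this is a welcome clarification rather than a new approach.
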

\begin{proof}
For $x:= m(t)$, we have the unique downward gradient $\nabla_x (-G) \in T_x$ from Lemma \ref{g11}.
Then let $v$ be the gradient $\nabla_{x}(-G)$ and $w_i$ be the tangent $\in T_{x}$ of a geodesic $[m(t) m(t_i)]$ for any $t_i > t$.
Then it follows from \eqref{e21} and \eqref{e22} that as $i \to \infty$,
\begin{equation}\label{zz1}
||w_i|| \to |\nabla_{-}G|(x) \;\;\text{and}\;\; d_{x} G(w_i) \to - (|\nabla_{-} G|(x))^2.
\end{equation}
If $v= o_x$, our proof is done since $||w_i||$ goes to $0$.
Otherwise, by Definition \ref{gg2}, we get
$$d_{x} G(w_i) \geq - \langle w_i, v \rangle = - ||w_i|| || v || \cos \theta_i \;\; \text{and} \;\; d_x G( v ) = - || v ||^2$$ where $\theta_i$ be the angle between $w_i$ and $v$.
Then
we obtain $$ -\frac{ d_{x} G(w_i) }{  ||w_i|| || v || } \leq \cos \theta_i. $$
Since $|| v || = |\nabla_{-}G|(x)$ by Definition \ref{aa1}, the left side becomes $1$ by \eqref{zz1}. Then $\theta_i$ goes to zero as $i \to \infty$. Our proof is finished.
\end{proof}

\section*{Acknowledgements}
\small The author would like to express his gratitude to Prof. S. Alexander and Prof. R. Bishop for their suggestions and
advice. He appreciates valuable comments from Prof. C. Croke, Prof. V. Kapovich, Prof. A. Lytchak and D. Lipsky. He is grateful to Prof. R. Ghrist for his support during the preparation of this article.
The material in this paper is part of the author's thesis \cite{j1}.
He gratefully acknowledge support from the ONR Antidote MURI project, grant no. N00014-09-1-1031.

\appendix\label{appendix}

\bigskip
\bigskip
\end{document}